\DeclarePairedDelimiter\floor{\lfloor}{\rfloor}
\DeclarePairedDelimiter{\ceil}{\lceil}{\rceil}
\newcommandx{\unsure}[2][1=]{\todo[linecolor=red,backgroundcolor=red!25,bordercolor=red,#1]{#2}}
\newcommandx{\change}[2][1=]{\todo[linecolor=blue,backgroundcolor=blue!25,bordercolor=blue,#1]{#2}}
\newcommandx{\info}[2][1=]{\todo[linecolor=OliveGreen,backgroundcolor=OliveGreen!25,bordercolor=OliveGreen,#1]{#2}}
\newcommandx{\improvement}[2][1=]{\todo[linecolor=Plum,backgroundcolor=red!25,bordercolor=red,#1]{#2}}
\newcommandx{\thiswillnotshow}[2][1=]{\todo[disable,#1]{#2}}
\begin{document}

\newtheorem{prop}{Proposition}[section]
\newtheorem{theorem}{Theorem}[section]
\newtheorem{lemma}{Lemma}[section]
\newtheorem{cor}{Corollary}[section]
\newtheorem{remark}{Remark}[section]
\theoremstyle{definition}
\newtheorem{defn}{Definition}[section]
\newtheorem{ex}{Example}[section]

\numberwithin{equation}{section}

\title{Decay of correlations for critically intermittent systems}
\author[Kalle, Zeegers]{Charlene Kalle and Benthen Zeegers}

%
\address{C.C.C.J. Kalle\\ Mathematical Institute, University of Leiden, PO Box 9512, 2300 RA Leiden, The Netherlands}
\email{kallecccj@math.leidenuniv.nl}
%
%
%
%
\address{B.P. Zeegers\\ Mathematical Institute, University of Leiden, PO Box 9512, 2300 RA Leiden, The Netherlands}
\email{b.p.zeegers@math.leidenuniv.nl}

\date{Version of \today}

\begin{abstract}
For a family of random intermittent dynamical systems with a superattracting fixed point we prove that a phase transition occurs between the existence of an absolutely continuous invariant probability measure and infinite measure depending on the randomness parameters and the orders of the maps at the superattracting fixed point. In case the systems have an absolutely continuous invariant probability measure, we show that the systems are mixing and that the correlations decay polynomially even though some of the deterministic maps present in the system have exponential decay. This contrasts other known results, where random systems adopt the best decay rate of the deterministic maps in the systems.
\end{abstract}
\subjclass[2020]{Primary: 37A05, 37A25, 37E05, 37H05}
\keywords{Critical intermittency, random dynamics, invariant measures, Young towers, decay of correlations}

\maketitle

\section{Introduction}\label{sec1}

Intermittency is a type of behaviour observed in certain dynamical systems, where the system alternates between long periods of either irregular activity or being in a seemingly steady state. Manneville and Pomeau investigated several different types of intermittency in the context of transitions to turbulence in convective fluids, see \cite{ManPum,ManPum2,BPV}. Well-known examples of one-dimensional systems exhibiting intermittent behaviour are the Manneville-Pomeau maps given by
\begin{equation}\label{q:mpmaps}
T_{\alpha}: [0,1]\to [0,1], \, x \mapsto x+x^{1+\alpha} \pmod 1, \quad \alpha >0,
\end{equation}
and their adaptations introduced in \cite{LSV} by Liverani, Saussol and Vaienti, now called LSV maps, that are given by
\begin{equation}\label{q:lsvmaps}
S_{\alpha}: [0,1] \to [0,1], \, x \mapsto
\begin{cases}
x(1+2^{\alpha} x^{\alpha}) & \text{if}\quad x \in [0,\frac{1}{2}],\\
2x-1 & \text{if}\quad x \in (\frac{1}{2},1],
\end{cases} \quad \alpha >0.
\end{equation}
For these systems the intermittency is caused by a neutral fixed point at zero, which makes orbits spend long periods of time close to the origin, while behaving chaotically otherwise.

\medskip 
The statistical properties of the Manneville-Pomeau maps, LSV maps and other similar maps with a neutral fixed point are by now well understood. For instance, it is proven in \cite{pianigiani1980} that such maps admit an absolutely continuous invariant measure. Furthermore, polynomial bounds on the decay of correlations of such maps were obtained in \cite{hu04,LSV,Y99}, and it was shown in \cite{gouezel04} that the rate of the polynomial upper bound from \cite{Y99} is in fact sharp.

\medskip
More recently intermittency has been considered in random systems, where one has a family of maps $\{ T_j: [0,1] \to [0,1] \}_{j \in I}$, for some finite index set $I$, and a probabilistic rule to determine which map is applied at each time step. Random systems with a neutral fixed point were studied in e.g.~\cite{BBD14,BB16,KKV17,BBR19,BRS20,KMTV} and for the systems in \cite{BBD14,BB16,BBR19,BRS20} it was shown that this leads to intermittency.   
The recent papers \cite{AbbGhaHom,HP,Zeegers21} analysed so-called {\em critical intermittency}, where orbits are attracted towards a superstable fixed point under iterations of some of the maps in $\{T_j\}$, while the other maps map this fixed point to another, repelling fixed point. Figure~\ref{fig1}(a) shows an example with the two logistic maps $T_2(x) = 2x(1-x)$ and $T_4(x)=4x(1-x)$ that fits the framework of \cite{AbbGhaHom,Zeegers21}. In \cite{Zeegers21} the authors found a phase transition for the existence of a unique invariant probability measure that is absolutely continuous with respect to the Lebesgue measure, depending on the randomness parameter and the orders of attraction of the maps in $\{T_j\}$ that have a superstable fixed point.

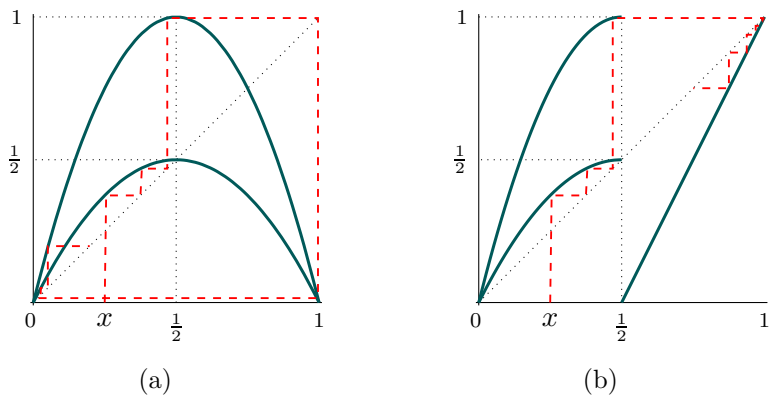
\begin{figure}[h] \label{fig1}
\centering
\subfigure[]
{
\begin{tikzpicture}[scale =3.8]
\draw(-.01,0)--(1.01,0)(0,-.01)--(0,1.01);
\draw[dotted](.5,0)--(.5,1)(0,1)--(.5,1)(.5,.5)--(0,.5);
\draw[dotted](0,0)--(1,1);
\draw[line width=.4mm, green!50!blue!70!black, smooth, samples =20, domain=0:1] plot(\x, { 4* \x * (1-\x)});
\draw[line width=.4mm, green!50!blue!70!black, smooth, samples =20, domain=0:1] plot(\x, { 2* \x * (1-\x)});
\draw[red, dashed, line width=.25mm](.25,0)--(.255,.375)--(.375,.375)--(.37995,.4688)--(.4688,.4688)--(.4688,.9961)--(.9961,.9961)--(.9961,.0155)--(.0155,.0155)--(.0155,.0262)--(.0262,.0262)--(0.0262,.051)--(0.051,.051)--(0.051,.1975)--(0.1975,.1975);

\node[below] at (.25,0){\small $x$};
\node[below] at (-.01,0){\tiny 0};
\node[below] at (1,0){\tiny 1};
\node[below] at (.5,0){\tiny $\frac12$};
\node[left] at (0,1){\tiny 1};
\node[left] at (0,.5){\tiny $\frac12$};
\end{tikzpicture}}
\hspace{1cm}
\subfigure[]
{
\begin{tikzpicture}[scale =3.8]
\draw(-.01,0)--(1.01,0)(0,-.01)--(0,1.01);
\draw[dotted](.5,0)--(.5,1)(0,1)--(.5,1)(.5,.5)--(0,.5);
\draw[dotted](0,0)--(1,1);
\draw[line width=.4mm, green!50!blue!70!black, smooth, samples =20, domain=0:0.5] plot(\x, { 4* \x * (1-\x)});
\draw[line width=.4mm, green!50!blue!70!black, smooth, samples =20, domain=0:0.5] plot(\x, { 2* \x * (1-\x)});
\draw[line width=.4mm, green!50!blue!70!black] (.5,0)--(1,1);
\draw[red, dashed, line width=.25mm](.25,0)--(.255,.375)--(.375,.375)--(.37995,.4688)--(.4688,.4688)--(.4688,.9961)--(.9961,.9961)--(.9961,.9922)--(.9922,.9922)--(.9922,.9844)--(.9844,.9844)--(0.9844,.9688)--(0.9688,.9688)--(0.9688,.9376)--(0.9376,.9376)--(0.9376,.8752)--(.8752,.8752)--(.8752,.7504)--(.7504,.7504);

\node[below] at (.25,0){\small $x$};
\node[below] at (-.01,0){\tiny 0};
\node[below] at (1,0){\tiny 1};
\node[below] at (.5,0){\tiny $\frac12$};
\node[left] at (0,1){\tiny 1};
\node[left] at (0,.5){\tiny $\frac12$};
\end{tikzpicture}}
\caption{Critical intermittency in the random system of (a) the logistic maps $T_2(x) = 2x(1-x)$ and $T_4(x)=4x(1-x)$, (b) the maps given by  \eqref{eq1u} with $r_g = 2$ and \eqref{eq1h} with $\ell_b = 2$. The dashed line indicates part of a random orbit of $x$.}
\label{fig:interm}
\end{figure}

\medskip
For those random systems from \cite{Zeegers21} that have an absolutely continuous invariant probability measure, one can naturally wonder about the mixing properties and decay of correlations. As is well known from e.g.~\cite{Keller92, Young92, Young98, BLS} the maps in $\{T_j\}$ without a superstable fixed point exhibit exponential decay of correlations. On the other hand, it has been conjectured in \cite{Zeegers21} that the random systems show polynomial decay of correlations. In this article we explore this question further, but instead of the random maps from \cite{Zeegers21} we work with adapted versions. The way in which we have adapted the systems from \cite{Zeegers21} very much resembles the way in which the LSV maps from \eqref{q:lsvmaps} are adaptations of the Manneville-Pomeau maps from \eqref{q:mpmaps}. We work with these adaptations because they allow us to build a suitable Young tower and use the corresponding results, while preserving the main dynamical properties of the maps $T_j$ from \cite{Zeegers21}.


\medskip 
We describe the systems we consider in more detail. Call a map $T_g: [0,1] \rightarrow [0,1]$ {\em good} if it is given by
\begin{align}\label{eq1u}
T_g(x) =  \begin{cases}
1 - 2^{r_g} (\frac{1}{2}-x)^{r_g} & \text{if}\quad x \in [0,\frac{1}{2}),\\
2x-1 & \text{if}\quad x \in [\frac{1}{2},1],
\end{cases}
\end{align}
for some $r_g \geq 1$ and denote the class of good maps by $\mathfrak G$. A map $T_b: [0,1] \rightarrow [0,1]$ is called {\em bad} if it is given by
\begin{align}\label{eq1h}
T_b(x) =  \begin{cases}
\frac{1}{2} - 2^{\ell_b-1} (\frac{1}{2}-x)^{\ell_b} & \text{if}\quad x \in [0,\frac{1}{2}),\\
2x-1 & \text{if}\quad x \in [\frac{1}{2},1],
\end{cases}
\end{align}
for some $\ell_b > 1$ and we denote the class of bad maps by $\mathfrak B$. See Figure \ref{fig1}(b) for an example of the good map with $r_g = 2$ and the bad map with $\ell_b = 2$, which on $[0,\frac{1}{2})$ are equal to the logistic maps $x \mapsto 4x(1-x)$ and $x \mapsto 2x(1-x)$, respectively. Note that if $r_g = 1$, then $T_g$ is equal to the doubling map. The random systems we consider in this article are the following. Let $\{ T_1,\ldots,T_N \} \subseteq \mathfrak G \cup \mathfrak B$ be a finite collection of good and bad maps. Write $\Sigma_G = \{1 \leq j \leq N\, :\,  T_j \in \mathfrak G\}$ and $\Sigma_B = \{1 \leq j \leq N\, :\,  T_j \in \mathfrak B\}$ for the index sets of the good and bad maps, respectively, and assume that $\Sigma_G,\Sigma_B \neq \emptyset$. Write $\Sigma = \{ 1, \ldots, N \} = \Sigma_G \cup \Sigma_B$. The skew product transformation  or {\em random map} $F$ is defined by
\begin{equation}\label{q:skewproduct}
 F:\Sigma^{\mathbb N} \times [0,1] \to \Sigma^{\mathbb N} \times [0,1], \, (\omega,x) \mapsto (\sigma \omega, T_{\omega_1}(x)),
 \end{equation}
where $\sigma$ denotes the left shift on sequences in $\Sigma^{\mathbb N}$.

\medskip
Throughout the text we use $\lambda$ to denote the one-dimensional Lebesgue measure. Let $\mathbf p = (p_j)_{j \in \Sigma}$ be a probability vector with strictly positive entries representing the probabilities with which we choose the maps $T_j$, $j \in \Sigma$. On $\Sigma^\mathbb N \times [0,1]$ we are interested in measures of the form $\mathbb P \times \mu$, where $\mathbb P$ is the $\mathbf p$-Bernoulli measure on $\Sigma^\mathbb N$ and $\mu$ is a Borel measure on $[0,1]$ absolutely continuous with respect to $\lambda$ and  satisfying
\begin{align*}
\sum_{j \in \Sigma} p_j \mu(T_j^{-1}A) = \mu(A), \qquad \text{for all Borel sets $A \subseteq [0,1]$}.
\end{align*}
In this case $\mathbb P \times \mu$ is an invariant measure for $F$ and we say that $\mu$ is a {\em stationary} measure for $F$. If, furthermore, $\mu$ is absolutely continuous with respect to $\lambda$, then we call $\mu$ an \emph{absolutely continuous stationary (acs)} measure for $F$.

\medskip 
Our first two main results establish that there exists a phase transition for the existence of an acs probability measure that is similar to the phase transition found in \cite{Zeegers21}. Set $\theta = \sum_{b \in \Sigma_B} p_b \ell_b$.

\begin{theorem}\label{result1a}
If $\theta \geq 1$, then $F$ admits no acs probability measure.
\end{theorem}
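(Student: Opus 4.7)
My plan is to derive a contradiction from Kac's lemma by inducing the skew product $F$ on the set $Y=(1/2,3/4]$. If $\mu$ were an acs probability measure, the first return time $R$ of $F$ to $\Sigma^\mathbb{N}\times Y$ would satisfy $\int R\,d(\mathbb{P}\times\mu|_Y)\le(\mathbb{P}\times\mu)(\Sigma^\mathbb{N}\times[0,1])=1$, so it suffices to show that $\mathbb{E}_\mathbb{P}[R(\cdot,y)]=+\infty$ for $\mu$-a.e.\ $y\in Y$.

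Fix $y\in Y$ and let $u_0:=1/2-(2y-1)=3/2-2y\in(0,1/2)$. Since every $T_j$ coincides with $x\mapsto 2x-1$ on $[1/2,1]$, after one step the orbit of $y$ sits at $1/2-u_0\in Z:=[0,1/2)$. I restrict attention to those $\omega$ for which the next $n\ge 0$ coordinates are bad maps $b_1,\dots,b_n\in\Sigma_B$ and the $(n+2)$-nd coordinate is a good map $g\in\Sigma_G$. Letting $u_k$ denote the distance from $1/2$ after $k$ further bad-map steps, the recursion $u_k=2^{\ell_{b_k}-1}u_{k-1}^{\ell_{b_k}}$ becomes $W_k=W_{k-1}^{\ell_{b_k}}$ after the substitution $W_k:=2u_k$, and therefore
\[ u_n=\tfrac{1}{2}(2u_0)^{L_\omega},\qquad L_\omega:=\ell_{b_1}\cdots\ell_{b_n}. \]
Applying $T_g$ then produces the point $1-v$ with $v=2^{r_g}u_n^{r_g}=(2u_0)^{r_g L_\omega}$. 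For $n$ large enough that $v<1/4$, this point lies in $(3/4,1)$, and since all $T_j$ coincide with $x\mapsto 2x-1$ on $[1/2,1]$ the orbit then doubles its distance to $1$ at every subsequent step, requiring a further $\lceil \log_2(1/(4v))\rceil=r_g L_\omega\log_2(1/(2u_0))+O(1)$ steps before re-entering $Y$. Hence
\[ R(\omega,y)\;\ge\;r_g L_\omega\log_2(1/(2u_0)) \]
on every such history, which has $\mathbb{P}$-probability $\big(\prod_{i=1}^n p_{b_i}\big)\,p_g$.

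Summing these contributions and using the factorisation
\[
\sum_{\omega\in\Sigma_B^n}\Big(\prod_{i=1}^n p_{b_i}\ell_{b_i}\Big)=\Big(\sum_{b\in\Sigma_B}p_b\ell_b\Big)^n=\theta^n
\]
coming from independence of the coordinates of $\omega$,
\[
\mathbb{E}_\mathbb{P}[R(\cdot,y)]\;\gtrsim\;\log_2(1/(2u_0))\cdot\Big(\sum_{g\in\Sigma_G}p_g r_g\Big)\!\sum_{n\ge n_0(u_0)}\theta^n.
\]
The geometric series diverges exactly when $\theta\ge 1$, so $\mathbb{E}_\mathbb{P}[R(\cdot,y)]=+\infty$ for every $y\in Y$ with $u_0>0$, i.e., Lebesgue-a.e.\ on $Y$. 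Integrating against $\mu$ yields the desired contradiction, provided $\mu(Y)>0$.

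The main technical obstacle is establishing this positivity $\mu(Y)>0$: one must exclude a priori the possibility that the acs measure $\mu$ is entirely carried by $[0,1]\setminus Y$. A standard irreducibility argument should suffice, exploiting that every $T_j$ has a right branch that maps $[3/4,1]$ bijectively onto $[1/2,1]$, so the random orbit of any set of positive Lebesgue measure meets $Y$ in finite time, and invariance together with absolute continuity of $\mu$ then forces $\mu(Y)>0$ unless $\mu\equiv 0$.
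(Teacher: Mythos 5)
Your proposal follows essentially the same route as the paper: induce on (a full-measure subset of) $\Sigma^{\mathbb N}\times(\tfrac12,\tfrac34)$, pass to $[0,\tfrac12)$, track escape histories of the form ``bad word $\mathbf b$ followed by a good digit $g$'', use the superexponential contraction of the bad maps (your explicit recursion $u_n=\tfrac12(2u_0)^{\ell_{\mathbf b}}$ is exactly Lemma~\ref{lemma2.1}) and the form of the good branch to show the time spent near $1$ is of order $r_g\,\ell_{\mathbf b}\log_2\bigl(1/(2u_0)\bigr)$, and then factor the expectation over independent bad digits to obtain a geometric series in $\theta$ which diverges once $\theta\ge1$, contradicting Kac. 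Two remarks on where you diverge from the write-up. First, the paper's cited Kac's Lemma (Lemma~\ref{l:kac}) is stated for \emph{ergodic} measures, so the paper inserts an ergodic-decomposition step to pass from $\mu$ to an ergodic component with the same divergence; you instead invoke the unconditional Kac inequality $\int_Y\varphi\,d(\mathbb P\times\mu)\le(\mathbb P\times\mu)(\Sigma^{\mathbb N}\times[0,1])$, valid for any finite invariant measure by Poincar\'e recurrence, which neatly sidesteps that step. This is a genuine (small) simplification and is worth making precise since the paper's stated lemma does not directly give it. Second, your $\mu(Y)>0$ step is the real remaining work and your gloss is slightly off target: the observation that the right branch carries $[\tfrac34,1]$ onto $[\tfrac12,1]$ is not what drives the argument. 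What the paper uses is that $F^2(Y)$ is conull in $\Sigma^{\mathbb N}\times[0,1]$, and that every $(\omega,x)\in Y$ has finite first return time (which rests on $\mathbb P(\tilde\Omega)=1$ and Lemma~\ref{lemma2.6d} to guarantee eventual escape from $[0,\tfrac12)$); together these give that $\bigcup_{n\ge0}F^{-n}Y$ is conull, whence $1\le\sum_{n\ge0}\mathbb P\times\mu(F^{-n}Y)=\sum_{n\ge0}\mu\bigl((\tfrac12,\tfrac34)\bigr)$ and $\mu\bigl((\tfrac12,\tfrac34)\bigr)>0$. In particular, the escape from the superattracting region near $\tfrac12$, not the dynamics on $[\tfrac34,1]$, is the nontrivial point and needs to be argued.
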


\begin{theorem}\label{result1b}
If $\theta < 1$, then $F$ admits a unique acs probablity measure $\mu$. Moreover, $F$ is mixing with respect to $\mathbb{P} \times \mu$ and the density $\frac{d\mu}{d\lambda}$ is bounded away from zero.
\end{theorem}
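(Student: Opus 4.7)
The plan is to realize $F$ as a random Young tower with integrable return time, and then invoke the random Young tower theory to obtain existence, uniqueness, mixing and the density bound for the acs measure. The structural reason for this approach is that the only non-hyperbolic feature of $F$ is the super-attraction toward $1/2$ from the left under bad maps: on $[1/2, 1]$ every $T_j$ coincides with the affine doubling map $2x - 1$, and good maps are uniformly expanding on $[0, 1/2)$ too. I therefore take $Y = [1/2, 1]$ as the base and define the first-return time
\[
R(\omega, x) = \inf\{n \geq 1 : \pi_{[0,1]} F^n(\omega, x) \in Y\},
\]
partitioning $\Sigma^{\mathbb N} \times Y$ into joint symbolic--geometric cylinders on which $R$ is constant and $F^R$ is a bijection in $x$ onto $Y$. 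Uniform expansion and bounded distortion of $F^R$ on each cylinder follow from $|T_b'(x)| = \ell_b 2^{\ell_b - 1}(1/2-x)^{\ell_b - 1}$, the affine right branch of derivative $2$, and standard $\mathcal{C}^2$ chain-rule estimates.

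The crux is showing $\int R \, d(\mathbb{P} \times \lambda|_Y) < \infty$ when $\theta < 1$. For $x \in [3/4, 1]$ one has $R = 1$ directly; otherwise the excursion starts at $z = 2x - 1 \in [0, 1/2)$ with distance $y_1 = 3/2 - 2x$ from $1/2$. An explicit induction using $T_b(1/2 - y) = 1/2 - 2^{\ell_b - 1} y^{\ell_b}$ gives that after a prefix of $k$ consecutive bad maps $T_{b_1}, \ldots, T_{b_k}$ the distance to $1/2$ is
\[
y_{k+1} = \tfrac{1}{2}(2 y_1)^{Q_k}, \qquad Q_k = \prod_{i=1}^{k} \ell_{b_i}.
\]
Once $y_{k+1}$ is small enough, any subsequent good map returns the orbit to $Y$, so $R = k + 2$. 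Splitting the integral by the length of the bad prefix and using Bernoulli independence,
\[
\sum_{b_1, \ldots, b_k \in \Sigma_B} p_{b_1} \cdots p_{b_k} \prod_{i=1}^{k} \ell_{b_i} = \theta^k,
\]
the contribution of excursions with $k$ consecutive bad maps is bounded by $C \theta^k$, and summing converges precisely when $\theta < 1$. The subcase where the good map fails to exit $[0,1/2)$ is handled by restarting the same estimate with the new (bounded-below) distance, producing only a geometric correction in the same variable $\theta$.

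With $\int R < \infty$ and the tower axioms verified, the random Young tower theory yields a unique $F$-invariant probability measure $\mathbb{P} \times \mu$ with $\mu \ll \lambda$, and gives mixing via aperiodicity $\gcd\{R\} = 1$, which is immediate since $R = 1$ on $\Sigma^{\mathbb N} \times [3/4, 1]$ and $R = 2$ on a positive-measure cylinder where $\omega_2$ is a good map and $y_1$ is sufficiently small. The density $h = d\mu/d\lambda$ is bounded below on $Y$ by the standard cone argument for the induced transfer operator; the lower bound extends to all of $[0, 1]$ because every point has a preimage in $Y$ under $2x-1$, so the stationarity relation $\sum_j p_j \mu(T_j^{-1}\cdot) = \mu(\cdot)$ propagates the bound.

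The main obstacle is likely the careful tail estimate for $R$: the nested exponents $Q_k$ and the recursive structure of excursions require delicate bookkeeping, with $\theta$ emerging as the correct geometric ratio only after the Bernoulli-independence reduction is fully exploited. Verifying uniform bounded distortion over the countably many cylinders of arbitrarily large height is the other technical hurdle, but this is controlled by the chain rule together with the uniform $\mathcal C^2$ structure of the finitely many branches.
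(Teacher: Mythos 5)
Your choice of inducing domain $Y=[\tfrac12,1]$ is where the argument breaks down, and it breaks in two linked ways.

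First, the induced return map to $[\tfrac12,1]$ is \emph{not} uniformly expanding and does not have bounded distortion, so the Young tower axioms fail. On a return branch $x\mapsto L_g\circ L_{\mathbf b}\circ R(x)$ with $\mathbf b\in\Sigma_B^k$ and a crossing good digit $g$, the piece is an interval whose right endpoint is $\tfrac34$, and as $x\to\tfrac34^-$ the intermediate point $L_{\mathbf b}(2x-1)$ tends to $\tfrac12^-$, the critical point of $L_g$, so $DL_g\to 0$ and the derivative of the whole return branch tends to $0$. There is no way to repair this with bounded-distortion or Koebe arguments, because the image of the branch reaches all the way up to the critical value $1$ of $L_g$, leaving no collar to extend into. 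This is precisely why the paper induces on $(\tfrac12,\tfrac34)$ rather than on $[\tfrac12,1]$: the extra $l=l(\omega,x)$ applications of the doubling branch $R$ contribute the factor $2^l\gtrsim(1-2y)^{-\ell_{\mathbf d}r_g}$ (the paper's \eqref{eq84b}), which is exactly what is needed to cancel the vanishing derivative of $L_g\circ L_{\mathbf d}$ near its critical point and to prove $DT_\omega^\varphi(x)\ge 2$ (Lemma \ref{lemma3.4a}). You do not have access to this compensation because every point of $(\tfrac34,1]$ is already inside your base, so your return time omits the $l$ excursion entirely.

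Second, the tail estimate is wrong for your base, and this is not a delicate issue of bookkeeping but a structural one. With $Y=[\tfrac12,1]$ the first return time is $\kappa$, the time to re-enter $[\tfrac12,1]$, and $\{\kappa>n\}$ requires $n-1$ consecutive non-crossing digits after the initial doubling; a short computation shows $\mathbb{P}\times\lambda(\kappa>n)\lesssim p_B^{\,n-1}$ with $p_B=\sum_{b\in\Sigma_B}p_b<1$. Thus $\int_Y\kappa\,d(\mathbb P\times\lambda)<\infty$ for \emph{every} choice of $\mathbf p$ and $\ell_b$, with no dependence on $\theta$. Your claimed $C\theta^k$ bound imports a factor $\ell_{\mathbf b}=\prod_i\ell_{b_i}$ that simply is not present in either the return time or the piece measure for this base; that factor arises only from the extra $l\sim\ell_{\mathbf b}r_g$ iterations in $(\tfrac34,1)$, which you have excluded by fiat. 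Were your tower valid, it would produce a finite acs measure for all $\theta$, contradicting Theorem \ref{result1a}. The $\theta$-transition genuinely lives in the interplay between the super-attraction (producing exit points exponentially close to $1$) and the subsequent slow escape under the doubling branch; collapsing $(\tfrac34,1]$ into the base hides the mechanism and invalidates the construction.

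The remaining pieces of your plan (uniqueness via mixing, aperiodicity, propagating the density bound from $Y$ to $[0,1]$ via the affine right branch and stationarity) are fine in spirit and close to what the paper does, but they sit on a foundation that does not hold.
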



\medskip 
Our second set of main results involve the decay of correlations in case $\theta<1$.
Equip $\Sigma^{\mathbb{N}} \times [0,1]$ with the metric
\begin{equation}\label{q:metric}
d\big((\omega,x),(\omega',y)\big) = 2^{-\min\{i \in \mathbb{N} \, : \, \omega_i \neq \omega_i'\}} + |x-y|.
\end{equation}
For $\alpha \in (0,1)$, let $\mathcal{H}_{\alpha}$ be the class of $\alpha$-H\"older continuous functions on $\Sigma^{\mathbb{N}} \times [0,1]$, i.e.,
\[ \mathcal{H}_{\alpha} = \Big\{h : \Sigma^{\mathbb{N}} \times [0,1] \rightarrow \mathbb{R} \, \Big| \,  \sup \Big\{ \frac{|h(z_1)-h(z_2)|}{d(z_1,z_2)^{\alpha}} : z_1,z_2 \in \Sigma^{\mathbb{N}} \times [0,1], z_1 \neq z_2 \Big\} < \infty \Big\},\]
and set
\[ \mathcal{H} = \bigcup_{\alpha \in (0,1)} \mathcal{H}_{\alpha}.\]
For $f \in L^{\infty}(\Sigma^{\mathbb N} \times [0,1],\mu)$ and $h \in \mathcal{H}$ the {\em correlations} are defined by 
\begin{align*}
Cor_n(f,h) = \int_{\Sigma^{\mathbb N} \times [0,1]} f \circ F^n \cdot h \, d\mathbb{P} \times \mu - \int_{\Sigma^{\mathbb N} \times [0,1]} f \, d\mathbb{P} \times \mu  \int_{\Sigma^{\mathbb N} \times [0,1]} h\, d\mathbb{P} \times \mu.
\end{align*}
Set $\ell_{\max} = \max\{\ell_b: b \in \Sigma_B\}$ and
\begin{align}\label{eqgamma1}
\gamma_1 = \frac{\log \theta}{\log \ell_{\max}}.
\end{align}
The following result gives an upper bound on the decay of correlations of $F$ that is polynomial with rate arbitrarily close to $\gamma_1$.

\begin{theorem}\label{result2a} Assume that $\theta < 1$. If $\gamma \in (\gamma_1,0)$, $f \in L^{\infty}(\Sigma^{\mathbb{N}} \times [0,1], \mathbb{P} \times \mu)$ and $h \in \mathcal{H}$, then $|Cor_n(f,h)| = O(n^{\gamma})$.
\end{theorem}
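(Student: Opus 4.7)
The plan is to prove Theorem~\ref{result2a} by exhibiting a Young tower for $F$ with polynomial return-time tails and invoking Young's theorem on polynomial decay of correlations \cite{Y99}. The natural strategy is to leverage the Young tower structure $(\Delta,\widehat F,R)$ already used in the proofs of Theorems~\ref{result1a} and \ref{result1b}, with base $\Delta_0\subseteq\Sigma^{\mathbb N}\times[0,1]$ carrying a countable Markov partition $\{\Delta_{0,i}\}$ and return time $R:\Delta_0\to\mathbb N$ such that $F^R|_{\Delta_{0,i}}$ is uniformly expanding with bounded distortion and maps bijectively onto $\Delta_0$. The essential new input will then be a polynomial tail estimate on $R$.

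The crux is to establish a bound of the form
\[(\mathbb{P}\times\lambda)\bigl(\{(\omega,x)\in \Delta_0 : R(\omega,x)>n\}\bigr) \;=\; O(n^{\gamma + \epsilon})\]
for every $\epsilon>0$ and every $\gamma\in(\gamma_1,0)$. The mechanism is that an orbit with large return time must remain close to the superattracting fixed point $x=\tfrac12$ for a long time, which forces the random sequence $\omega$ to contain long stretches of bad symbols. Writing $y=\tfrac12-x$, a bad map of order $\ell_b$ acts by $y\mapsto 2^{\ell_b-1}y^{\ell_b}$, so $k$ consecutive bad maps with orders $\ell_{b_1},\dots,\ell_{b_k}$ contract $y$ super-exponentially fast, yielding $y_k\asymp y_0^{L_k}$ with $L_k=\prod_{i=1}^k\ell_{b_i}\le\ell_{\max}^k$. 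The relevant combinatorial quantity that organizes the interplay between the Bernoulli weight $\prod p_{b_i}$ on such words, the Lebesgue measure of admissible starting strips in $\Delta_0$ (controlled via the bounded-distortion estimates for $F^R$), and the time $n$ allotted to the excursion is precisely $\theta = \sum_b p_b\ell_b$. The exponential relation $n\asymp\ell_{\max}^k$ between the time horizon and the bad-symbol count then converts a $\theta^k$-type sum into the polynomial tail $\theta^k\asymp n^{\log\theta/\log\ell_{\max}}=n^{\gamma_1}$; the slack of $\epsilon$ in the exponent absorbs the inequality $L_k\le\ell_{\max}^k$ and the handling of interspersed good maps.

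Once the tail estimate is in place, the remainder is a standard application of Young's theorem. One checks that observables $h\in\mathcal H_\alpha$ lift to functions on the tower that are H\"older along unstable leaves and suitably controlled along the symbolic direction---the metric \eqref{q:metric} on $\Sigma^{\mathbb N}$ being chosen precisely so that the shift rate matches the expansion of $F^R$---while $f\in L^\infty$ poses no further regularity requirement. Young's theorem \cite{Y99} then converts the polynomial tail on $R$ into $|Cor_n(f,h)|=O(n^\gamma)$ for every $\gamma\in(\gamma_1,0)$. The main obstacle throughout is the tail estimate: the delicate part is the combinatorial bookkeeping for mixed sequences of good and bad maps in which the good maps do not immediately escape the neighborhood of $\tfrac12$ (because the orbit has not yet contracted sufficiently), and verifying that such contributions cannot spoil the rate $\gamma_1=\log\theta/\log\ell_{\max}$.
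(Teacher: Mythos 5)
Your overall strategy is the same as the paper's: build a Young tower with base $Y$, establish a polynomial tail bound on the return time, lift the observables to the tower, and invoke Young's polynomial decay theorem. However, there is a genuine error in the key tail estimate, and it undermines the conclusion.

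You claim the base tail satisfies $(\mathbb{P}\times\lambda)(R>n)=O(n^{\gamma+\epsilon})$ and that Young's theorem then yields decay $O(n^{\gamma})$. But Young's theorem loses a power of $n$ when passing from the \emph{base} tail to the decay rate: the hypothesis as the paper states it (Theorem~\ref{t:young1}(iv)) is on the tower quantity $m(\hat\varphi>n)$, and the tower tail is obtained from the base tail by summing over levels, $m(\hat\varphi>n)=\sum_{l>n}\mathbb{P}\times\lambda(\varphi>l)$. A base tail of $O(n^{-\alpha})$ therefore only gives a tower tail of $O(n^{-\alpha+1})$, and correlations decay at this weaker rate. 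To conclude $O(n^{\gamma})$ decay you need $\mathbb{P}\times\lambda(\varphi>n)=O(n^{\gamma-1})$, which is exactly what the paper's Proposition~\ref{prop3.3b}(ii) proves.

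The missing power of $n$ is not a minor slack absorbed by $\epsilon$; it comes from a part of the geometry your heuristic drops. The Lebesgue measure of a return-time partition strip $(R^{|\mathbf w|}\circ L_g\circ L_{\mathbf b}\circ R)^{-1}(\tfrac12,\tfrac34)$ is of size $2^{-(|\mathbf w|+1)\ell_{\mathbf b}^{-1}r_g^{-1}}-2^{-(|\mathbf w|+2)\ell_{\mathbf b}^{-1}r_g^{-1}}$; near the critical scale $\ell_{\mathbf b}\asymp n$ this difference carries an $\ell_{\mathbf b}^{-1}\asymp n^{-1}$ factor on top of the $O(1)$ exponential factor, so the effective geometric rate in $k=|\mathbf b|$ is $p_{\mathbf b}$, \emph{not} $\theta^k=(p_{\mathbf b}\ell_{\mathbf b})^k$. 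Since $\log p_B/\log\ell_{\max}=\gamma_1-1$, the tail is $n^{\gamma_1-1}$, not $n^{\gamma_1}$. In the paper this is exactly what the factorization $p_{\mathbf b}\ell_{\mathbf b}\cdot\frac{\ell_{\mathbf b}^{-1}}{d^{\ell_{\mathbf b}^{-1}}}$ combined with the bound $x/d^x\le 1/(e\log d)$ in Lemma~\ref{lemma3.5c} achieves: it creates both the $\theta^k$ and the compensating $(n-1-j-k)^{-1}$ factor. Without tracking this, your proposed tail estimate is one power too weak and does not yield the theorem.
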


For each $b \in \Sigma_B$ set
\begin{align}\label{eqpib}
\pi_b = \sum_{j \in \Sigma_B: \ell_j \geq \ell_b} p_j
\end{align}
and let
\begin{align}\label{eqgamma2}
\gamma_2 = 1 + \max\Big\{ \frac{\log \pi_b}{\log \ell_b}: b \in \Sigma_B\Big\}.
\end{align}
Note that
\[ \gamma_2 = \max\Big\{ \frac{\log (\pi_b \cdot \ell_b) }{\log \ell_b}: b \in \Sigma_B\Big\} \leq \max\Big\{ \frac{\log \theta }{\log \ell_b}: b \in \Sigma_B\Big\} = \gamma_1.\]
In our final result we show that, under additional assumptions on the parameters of the random systems, the class of observables $f \in L^{\infty}(\Sigma^{\mathbb N} \times [0,1],\mu)$ and $h \in \mathcal{H}$ contains functions for which the decay rate is at most polynomial with rate $\gamma_2$.

\begin{theorem}\label{result2b}
Assume that $\theta < 1$. Furthermore, assume that $\gamma_2 > \gamma_1 -1$ if $\gamma_1 < -1$ and $\gamma_2 > 2\gamma_1$ if $-1 \leq \gamma_1 < 0$. Let $f \in L^{\infty}(\Sigma^{\mathbb{N}} \times [0,1], \mathbb{P} \times \mu)$ and $h \in \mathcal{H}$ be such that both $f$ and $h$ are identically zero on $\Sigma^{\mathbb{N}} \times \big([0,\frac{1}{2}] \cup [\frac{3}{4},1]\big)$ and such that
\[ \int_{\Sigma^\mathbb N \times [0,1]} f \, d\mathbb{P} \times\mu \cdot \int_{\Sigma^\mathbb N \times [0,1]} h \, d\mathbb{P} \times \mu > 0.\]
Then
\begin{align}
|Cor_n(f,h)| = \Omega(n^{\gamma_2}).
\end{align}
\end{theorem}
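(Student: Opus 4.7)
The plan is to reduce the claimed lower bound on $|Cor_n(f,h)|$ to a lower bound on the tail of the first-return time $\varphi$ of $F$ to the base $Y$ of the Young tower (presumably constructed already in the proofs of Theorems~\ref{result1b} and~\ref{result2a}; one may take $Y \subset \Sigma^{\mathbb N} \times [\tfrac12, \tfrac34]$, matching the support of $f$ and $h$). Slow returns from the critically attractive region $[0, \tfrac12)$ should propagate, via a renewal-type expansion, into slow decay of correlations of observables supported in $Y$; the exponent $\gamma_2$ arises naturally as the one governing the worst slow-return orbits.

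\emph{Step 1: lower bound on return-time tails.} I would prove
\[
\mathbb{P}\times\mu(\varphi > n) \;\geq\; c\, n^{\gamma_2 - 1}
\]
by an explicit construction. Let $b^* \in \Sigma_B$ realise the maximum in~\eqref{eqgamma2}, set $\ell^* = \ell_{b^*}$, and fix a good symbol $g \in \Sigma_G$. For each $m \geq 1$, consider the family of $(\omega, x) \in Y$ for which $\omega_2, \ldots, \omega_{m+1}$ are bad symbols with order $\ell_{\omega_i} \geq \ell^*$ (symbolic mass $\pi_{b^*}^m$) and $\omega_{m+2} = g$. Writing $y_k = \tfrac12 - x_k$, the bad-map recursion on $[0,\tfrac12)$ gives $y_{k+1} = 2^{\ell_{\omega_{k+1}} - 1} y_k^{\ell_{\omega_{k+1}}}$, driving $y_{m+1}$ doubly exponentially small in $m$. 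Applying $T_g$ then places the orbit at distance $\sim y_{m+1}^{r_g}$ from the fixed point $1$, after which $x \mapsto 2x - 1$ on $[\tfrac12, 1]$ needs $\sim r_g \log_2(1/y_{m+1})$ additional iterates to return to $Y$. Demanding total return time $>n$ translates into a Lebesgue condition $y_1 \leq C\, 2^{-(n-m)/(r_g (\ell^*)^m)}$, carving a Lebesgue slice of the same order on $Y$. Maximising $\pi_{b^*}^m \cdot 2^{-(n-m)/(r_g (\ell^*)^m)}$ over $m$ (symbolic and geometric costs balance at $m \asymp \log_{\ell^*} n$) yields a dominant contribution
\[
\pi_{b^*}^{\log_{\ell^*} n} \;=\; n^{\log \pi_{b^*}/\log \ell_{b^*}} \;=\; n^{\gamma_2 - 1}.
\]

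\emph{Step 2: from the tail estimate to the correlation lower bound.} I would then invoke the operator-renewal machinery of Gouëzel~\cite{gouezel04}, adapted to the random Young tower, to derive an expansion of the form
\[
Cor_n(f, h) \;=\; \Big(\int f\, d\mathbb{P}\times\mu\Big)\Big(\int h\, d\mathbb{P}\times\mu\Big)\, A_n + E_n,
\]
where $A_n \geq c\sum_{k > n} \mathbb{P}\times\mu(\varphi > k) \geq c' n^{\gamma_2}$ by Step~1, and $E_n$ absorbs the higher-order corrections. Using the tail upper bound $\mathbb{P}\times\mu(\varphi > n) = O(n^{\gamma_1 - 1})$ that should fall out of the proof of Theorem~\ref{result2a}, the binding errors in $E_n$ are of orders $n^{2\gamma_1}$ (from second-order convolutions of tails) and $n^{\gamma_1 - 1}$ (from first-order shift corrections). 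The dichotomy in the hypothesis is precisely what is needed to make both errors $o(n^{\gamma_2})$: the dominant of the two is $n^{\gamma_1 - 1}$ when $\gamma_1 < -1$, requiring $\gamma_2 > \gamma_1 - 1$, and it is $n^{2\gamma_1}$ when $-1 \leq \gamma_1 < 0$, requiring $\gamma_2 > 2\gamma_1$. Combined with $\int f \int h > 0$, the main term then has a definite sign and magnitude $\geq c'' n^{\gamma_2}$, so $|Cor_n(f,h)| = \Omega(n^{\gamma_2})$.

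\emph{Main obstacle.} The delicate part is Step 2: cleanly setting up the operator-renewal expansion in the skew-product setting and verifying the precise form and order of the error terms $E_n$. The support restriction to $\Sigma^{\mathbb N} \times (\tfrac12, \tfrac34)$ in the hypothesis is exactly what allows one to view $f$ and $h$ as observables living on the tower base $Y$, so that the renewal expansion applies without boundary-effect complications. Step 1, by contrast, is a concrete dynamical calculation once the tower structure is in hand.
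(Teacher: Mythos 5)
Your proposal follows essentially the same route as the paper: Step~1 is exactly Proposition~\ref{prop3.3b}(iii), proved via the construction in Lemma~\ref{lemma3.6c} with the same choice of the maximizing bad symbol and the balance $m \asymp \log_{\ell^*} n$, and Step~2 is realized in the paper by applying the quoted Gou\"ezel result (Theorem~\ref{t:gouezel}), with hypotheses (g1)--(g3) verified through Lemma~\ref{lemma4.3p}, Lemma~\ref{lemma4.2d} and Proposition~\ref{prop3.3b}(ii). The error dichotomy $n^{\gamma-1}$ versus $n^{2\gamma}$ that you identify is precisely the form of $K_{1-\gamma}(n)$ in that theorem, and your two conditions on $\gamma_2$ match the paper's choice of the intermediate parameter $\gamma$ ensuring $K_{1-\gamma}(n)=o(n^{\gamma_2})$.
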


In Section~\ref{sec4} we provide examples of values of $\ell_b$ and probability vectors $\mathbf p$ that satisfy the conditions of Theorem~\ref{result2b}.

\medskip
The proof of Theorem \ref{result1b} we present below also carries over to the case that $\Sigma_B = \emptyset$. Applying Theorem \ref{result1b} to the case that $\Sigma_B = \emptyset$ and $\Sigma_G = \{ g \}$ contains one element yields together with \cite[Theorem 1.5]{luzzatto13} the following result on the good maps $T_g \in \mathfrak G$.
\begin{cor}\label{thrm1.1}
For any $T_g: [0,1] \rightarrow [0,1] \in \mathcal G$ the following hold.
\begin{enumerate}
\item $T_g$ admits an invariant probability measure $\mu_g$ that is mixing and absolutely continuous with respect to Lebesgue measure $\lambda$.
\item There exists a constant $a > 0$ such that for each $f \in L^{\infty}(\mu_g)$ and each function $h: [0,1] \rightarrow \mathbb{R}$ of bounded variation we have
\begin{align*}
|Cor_{\mu_g,n}(f,h)| = O(e^{-an}),
\end{align*}
where
\[Cor_{\mu_g,n}(f,h) = \int f \circ T_g^n \cdot h \, d\mu_g - \int f d\mu_g \cdot \int h \, d\mu_g.\]
\end{enumerate}
\end{cor}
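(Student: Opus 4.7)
The plan is to prove Part (1) as a direct consequence of the extension of Theorem~\ref{result1b} to $\Sigma_B = \emptyset$ promised in the preceding paragraph, and then to deduce Part (2) by feeding the measure $\mu_g$ produced in Part (1) into \cite[Theorem~1.5]{luzzatto13}.

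For Part (1), I would specialise the general setup to $N=1$, $\Sigma = \Sigma_G = \{g\}$ and $p_g=1$. Then $\Sigma^{\mathbb N}$ reduces to the singleton $\{(g,g,\ldots)\}$, $\mathbb P$ is a point mass, and the skew product $F$ from~\eqref{q:skewproduct} is conjugate to $T_g$ acting on $[0,1]$. The stationarity equation collapses to $\mu(T_g^{-1}A)=\mu(A)$, so the acs stationary probability measure produced by (the extension of) Theorem~\ref{result1b} is precisely an acs $T_g$-invariant probability measure $\mu_g$. Since any $f,h\in L^\infty(\mu_g)$ lift trivially to functions on $\Sigma^{\mathbb N}\times[0,1]$ that are constant in $\omega$, mixing of $F$ with respect to $\mathbb P \times \mu_g$ specialises immediately to mixing of $T_g$ with respect to $\mu_g$, completing Part~(1).

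For Part (2), I would apply \cite[Theorem~1.5]{luzzatto13} to $T_g$. The map is piecewise $C^\infty$ with two full Markov branches: on $[\tfrac12,1]$ it has constant slope $2$, and on $[0,\tfrac12)$ its derivative $r_g 2^{r_g}(\tfrac12-x)^{r_g-1}$ is strictly positive in the interior and vanishes only in the one-sided limit $x\to\tfrac12^-$. The postcritical orbit is trivial because $T_g(\tfrac12^-)=1$ and $1$ is a fixed point of $T_g$. Together with the acs invariant probability measure supplied by Part~(1), these are precisely the ingredients required by the hypotheses of Luzzatto's theorem, which then yields a constant $a>0$ together with the claimed bound $|Cor_{\mu_g,n}(f,h)| = O(e^{-an})$ for every $f\in L^\infty(\mu_g)$ and every $h$ of bounded variation on $[0,1]$.

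The main obstacle I anticipate is the bookkeeping around the flat critical point at $x=\tfrac12$ when $r_g>1$, since this is not a uniformly expanding situation and a priori lies outside the reach of classical BV-transfer-operator arguments. However, because the critical orbit is absorbed by the hyperbolic fixed point $1$ after a single step, the dynamics are essentially Misiurewicz-like, and one can build a Young tower over any reference set bounded away from $\tfrac12$ whose return-time distribution has exponential tails: no bad maps are present to produce the polynomial tails seen in the proof of Theorem~\ref{result2a}. This is exactly the regime covered by \cite[Theorem~1.5]{luzzatto13}, so no additional work beyond checking the standing assumptions of that theorem is required.
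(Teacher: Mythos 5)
Your proposal is correct and follows essentially the same route the paper takes: the paper's own justification is the sentence immediately preceding the corollary, which derives Part~(1) by applying the extension of Theorem~\ref{result1b} to the case $\Sigma_B=\emptyset$, $\Sigma_G=\{g\}$ (so that $\theta=0<1$ and the skew product collapses to $T_g$), and obtains Part~(2) by then invoking \cite[Theorem~1.5]{luzzatto13}. Your additional remarks on verifying Luzzatto's standing hypotheses (full Markov branches, the postcritical orbit being absorbed by the repelling fixed point at $1$) are accurate and merely make explicit what the paper leaves implicit.
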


\medskip
This result is expected in view of the exponential decay of correlations found for the unimodal maps from \cite{Keller92, Young92, Young98}. We see that test functions that fall within the scope of both this corollary and Theorems \ref{result2a} and \ref{result2b} have exponential decay of correlations under a single good map while under the random system with $\Sigma_B \neq \emptyset$ they have polynomial decay of correlations.\footnote{Examples of such test functions are Lipschitz continuous  functions 
that vanish outside of $\Sigma^{\mathbb{N}} \times (\frac{1}{2},\frac{3}{4})$.} This indicates that a system of good maps loses its exponential decay of correlations when mixed with bad maps and instead adopts polynomial mixing rates. This is different from what has been observed for other random systems in e.g.~\cite{BBD14,BB16,BBR19}, where the random systems of LSV maps under consideration adopt the highest decay rate from the rates of the individual LSV maps present in the system.

\medskip 
This article is outlined as follows. In Section \ref{sec:mr} we introduce some notation and list some preliminaries. In Section \ref{sec3} we prove Theorem \ref{result1a}. The method of proof for this part is reminiscent of that in \cite{Zeegers21} in the sense that we introduce an induced system and apply Kac's Lemma on the first return times. In Section \ref{sec3} we also give estimates on the first return times and the induced map that we use later in Section \ref{sec4}. For Theorem~\ref{result1b} the approach from \cite{Zeegers21} no longer works, because we have introduced a discontinuity at $\frac12$. Instead we prove Theorem~\ref{result1b}, as well as Theorem \ref{result2a} and Theorem \ref{result2b}, by constructing a Young tower on the inducing domain from Section~\ref{sec3} and by applying the general theory from \cite{Y99} and \cite{gouezel04}. This is done in 
Section~\ref{sec4} and is inspired by the methods from \cite{BBD14,BB16}. Section \ref{sec5} contains some further results. More specifically, we show that for a specific class of test functions the upper bound from Theorem \ref{result2a} can be improved and we obtain a Central Limit Theorem. We end with some final remarks.

\section{Preliminaries}\label{sec:mr}
We use this section to introduce some notions and results from the literature. We present them in an adapted form, rephrasing them to our setting and only referring to the parts that are relevant for our purposes.

\subsection{Words and sequences}

As in the Introduction, let $T_1,\ldots,T_N \in \mathfrak G \cup \mathfrak B$ be a finite collection of good and bad maps, and write $\Sigma_G = \{1 \leq j \leq N\, :\,  T_j \in \mathfrak G\} \neq \emptyset$, $\Sigma_B = \{1 \leq j \leq N\, :\,  T_j \in \mathfrak B\} \neq \emptyset$ and $\Sigma = \{ 1, \ldots, N \} = \Sigma_G \cup \Sigma_B$.

\medskip
We use $\mathbf u \in \Sigma^n$ to denote a {\em word} $\mathbf u = u_1 \cdots u_n$. $\Sigma^0$ contains only the empty word, which we denote by $\epsilon$. We write $\Sigma^* = \bigcup_{n \ge 0} \Sigma^n$ for the collection of all finite words with digits from $\Sigma$. The concatenation of two words $\mathbf u, \mathbf v \in \Sigma^*$ is written as $\mathbf u \mathbf v$. For a word $\mathbf u \in \Sigma^*$ we use $|\mathbf u|$ to denote its length, so $|\mathbf u|=n$ if $\mathbf u \in \Sigma^n$. Similarly, we write $\Sigma_G^*$ and $\Sigma_B^*$ for the collections of finite words with elements in $\Sigma_G$ and $\Sigma_B$, respectively. On the space of infinite sequences $\Sigma^\mathbb N$ we consider the product topology obtained from the discrete topology on $\Sigma$. We use
\[ [\mathbf u] = [u_1 \cdots u_n] = \{\omega \in \Sigma^{\mathbb{N}}: \omega_1 = u_1, \ldots, \omega_n = u_n\}\]
to denote the cylinder set corresponding to a word $\mathbf u \in \Sigma^*$. For a probability vector $\mathbf p = (p_j)_{j \in \Sigma}$ and a word $\mathbf u \in \Sigma^n$ we write $p_{\mathbf u} = \prod_{i=1}^n p_{u_i} $ with $p_{\mathbf u}=1$ if $n=0$. Similarly, for $\mathbf b \in \Sigma_B^n$ we write $\ell_{\mathbf b} = \prod_{i=1}^n \ell_{b_i}$ with $\ell_{\mathbf b}=1$ if $n=0$.

\medskip
We use the following notation for compositions of $T_1,\ldots,T_N$. For each $\omega \in \Sigma^{\mathbb{N}}$ and each $n \in \mathbb{N}_0$ we write
\begin{equation}\label{q:concatenateT}
 T_{\omega_1 \cdots \omega_n }(x) = T_\omega^n(x) =  \begin{cases}  x, & \text{if}\quad n=0,\\
T_{\omega_n}\circ  T_{\omega_{n-1}}\circ \cdots \circ T_{\omega_1}(x), & \text{for } n\ge 1.
\end{cases}
\end{equation}
With this notation, we can write the iterates of the random system $F$ from \eqref{q:skewproduct} as
\[ F^n(\omega,x) = (\sigma^n \omega, T_{\omega}^n(x)).\]
We also use the notation from \eqref{q:concatenateT} for finite words $\mathbf u \in \Sigma^m$, $m \ge 1$, instead of sequences $\omega \in \Sigma^\mathbb N$ and $n \le m$.

\medskip
For ease of notation, we refer to the left branch of a map $T_j$ by $L_j$, i.e., for $x \in [0,\frac{1}{2})$ we write $L_j(x) = T_j(x)$, $j \in \Sigma$. We also use the notation from \eqref{q:concatenateT} in this situation, so for $\omega \in \Sigma^{\mathbb{N}}$, $n \in \mathbb{N}_0$ and $x \in [0,\frac{1}{2})$ such that $L_{\omega_j} \circ \cdots \circ L_{\omega_1}(x) \in [0,\frac{1}{2})$ for each $j=1,\ldots,n-1$ we write
\begin{align}\label{eq19d}
L_{\omega_1 \cdots \omega_n }(x) = L_\omega^n(x) =  \begin{cases}  x, & \text{if}\quad n=0,\\
L_{\omega_n}\circ  L_{\omega_{n-1}}\circ \cdots \circ L_{\omega_1}(x), & \text{for } n\ge 1.
\end{cases}
\end{align}
Also in this situation we use the same notation for finite words $\mathbf u \in \Sigma^m$, $m \ge 1$, and $n \le m$. We will use $R: [\frac12,1] \to [0,1], \, x \mapsto 2x-1$ to denote the right branche of the maps $T_j$.

\subsection{Invariant measures}
Each map $T_j$, $j \in \Sigma$, is Borel measurable and non-singular with respect to Lebesgue (that is, $\lambda(A) =0$ if and only if $\lambda(T_j^{-1}A) = 0$ for all Borel sets $A \subseteq [0,1]$). Therefore the following lemma applies to the skew product $F$ from \eqref{q:skewproduct}.

\begin{lemma}[\cite{Mor85}, see also Lemma 3.2 of \cite{Fro99}]\label{l:productmeasure}
Let $\mathbf p = (p_j)_{j \in \Sigma}$ be a positive probability vector and let $\mathbb P$ be the $\mathbf p$-Bernoulli measure on $\Sigma^{\mathbb{N}}$. Then the $\mathbb{P} \times \lambda$-absolutely continuous $F$-invariant finite measures are precisely the measures of the form $\mathbb{P} \times \mu$ where $\mu$ is finite and absolutely continuous w.r.t.~$\lambda$ and satisfies
\begin{align}\label{eqn9}
\sum_{j \in \Sigma} p_j \mu (T_j^{-1}A) = \mu (A) \qquad \text{for all Borel sets $A$}.
\end{align}
\end{lemma}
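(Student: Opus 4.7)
The plan is to establish the two directions of the equivalence separately.

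For the easy direction, given $\mu \ll \lambda$ satisfying \eqref{eqn9}, I would verify $F$-invariance of $\mathbb{P} \times \mu$ on the generating $\pi$-system of cylindrical product sets $[\mathbf u] \times B$. The decomposition $F^{-1}([\mathbf u] \times B) = \bigsqcup_{j \in \Sigma} [j\mathbf u] \times T_j^{-1}(B)$, together with the Bernoulli identity $\mathbb{P}([j\mathbf u]) = p_j \mathbb{P}([\mathbf u])$ and \eqref{eqn9}, gives $(\mathbb P \times \mu)(F^{-1}([\mathbf u] \times B)) = \mathbb P([\mathbf u]) \sum_{j \in \Sigma} p_j \mu(T_j^{-1}B) = \mathbb P([\mathbf u]) \mu(B) = (\mathbb P \times \mu)([\mathbf u] \times B)$, and the monotone class theorem extends this equality to all Borel sets. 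Absolute continuity $\mathbb P \times \mu \ll \mathbb P \times \lambda$ is immediate from $\mu \ll \lambda$.

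For the converse, let $\nu$ be a finite $F$-invariant measure with $\nu \ll \mathbb P \times \lambda$. First I would show that $(\pi_1)_*\nu$ is a constant multiple of $\mathbb P$: it is $\sigma$-invariant because $\pi_1 \circ F = \sigma \circ \pi_1$ and absolutely continuous with respect to $\mathbb P$, so ergodicity of the Bernoulli measure $\mathbb P$ under the shift forces $(\pi_1)_*\nu = c\,\mathbb P$ for some $c\ge 0$. After rescaling I may assume $\nu$ is a probability measure and $(\pi_1)_*\nu = \mathbb P$. Disintegrating along $\pi_1$ yields $\nu = \int \nu_\omega \, d\mathbb P(\omega)$ with $\nu_\omega \ll \lambda$ a probability measure on $[0,1]$ for $\mathbb P$-a.e.\ $\omega$. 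Setting $\mu := \int \nu_\omega \, d\mathbb P(\omega) = (\pi_2)_*\nu$, the target identity $\nu = \mathbb P \times \mu$ reduces to showing $\nu_\omega = \mu$ for $\mathbb P$-a.e.\ $\omega$; once this is in hand, the stationarity equation \eqref{eqn9} for $\mu$ follows by substituting $\nu = \mathbb P \times \mu$ back into the $F$-invariance as in the easy direction.

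The main obstacle is this last $\mathbb P$-a.e.\ constancy of the fiber disintegration. Testing $F$-invariance against $\mathbf 1_{[\mathbf u] \times B}$ and using the factorisation $\mathbb P([j\mathbf u]) = p_j \mathbb P([\mathbf u])$ produces the cocycle identity
\[ \nu_\omega(B) = \sum_{j \in \Sigma} p_j\,\nu_{j\omega}(T_j^{-1}B) \qquad \text{for $\mathbb P$-a.e.\ } \omega, \]
valid simultaneously for $B$ ranging over a countable algebra generating $\mathcal B([0,1])$. Passing to densities and using the $L^1$-contraction of the Perron--Frobenius operators $P_j$ of the non-singular $T_j$, the bounded function $\alpha(\omega) := \|\nu_\omega - \mu\|_{\mathrm{TV}}$ satisfies $\alpha \le Q\alpha$ for the Markov operator $(Q\varphi)(\omega) := \sum_{j \in \Sigma} p_j \varphi(j\omega)$; since $\mathbb P$ is $Q$-stationary, integrating forces $\alpha = Q\alpha$ $\mathbb P$-a.e., so $\alpha$ is bounded and $Q$-harmonic. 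To collapse $\alpha$ to a constant, I would identify $(Q^n\varphi)(\sigma^n\omega)$ with the $\mathbb P$-conditional expectation of $\varphi$ given the decreasing tail $\sigma$-algebra $\mathcal T_n := \sigma(\omega_{n+1},\omega_{n+2},\ldots)$, and invoke reverse martingale convergence together with triviality of $\mathcal T_\infty$ under the Bernoulli product $\mathbb P$ (Kolmogorov's zero--one law) and $\sigma$-invariance of $\mathbb P$. The delicate remaining point, and the one I expect to be hardest, is ruling out a nonzero value for this constant; this uses strictness of the $L^1$-contraction of the $P_j$ on the mean-zero subspace of integrable functions, coming from the branch structure of the maps $T_j$ and precisely the point where non-singularity of the $T_j$ and the product structure of $\mathbb P$ interact. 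This delivers $\alpha \equiv 0$ a.e., hence $\nu_\omega = \mu$ for $\mathbb P$-a.e.\ $\omega$, completing the proof.
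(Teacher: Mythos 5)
First, note that the paper does not prove this lemma at all: it is quoted from \cite{Mor85} and \cite{Fro99}, so there is no in-paper argument to match yours against. Your easy direction is fine, and the first reductions of the converse (the marginal $(\pi_1)_*\nu=c\,\mathbb P$ via ergodicity, the disintegration $\nu=\int\nu_\omega\,d\mathbb P$, and the a.e.\ identity $\nu_\omega(B)=\sum_j p_j\,\nu_{j\omega}(T_j^{-1}B)$ on a countable generating algebra) are all correct, as is the identification of $Q^n$ with conditioning on $\mathcal T_n$ and the reverse-martingale/zero--one-law step \emph{granted} the inputs you feed into it.

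The genuine gap is the inequality $\alpha\le Q\alpha$ for $\alpha(\omega)=\|\nu_\omega-\mu\|_{\mathrm{TV}}$, which is circular as stated. From the cocycle identity you only get $\nu_\omega-\mu=\sum_j p_j (T_j)_*(\nu_{j\omega}-\mu_j)$, where $\mu_j:=p_j^{-1}\int_{[j]}\nu_{\omega'}\,d\mathbb P(\omega')$ is the average of the fibers over the cylinder $[j]$; hence the $L^1$-contraction of the transfer operators yields $\alpha(\omega)\le\sum_j p_j\|\nu_{j\omega}-\mu_j\|_{\mathrm{TV}}$, not $\sum_j p_j\,\alpha(j\omega)$. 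To replace $\mu_j$ by $\mu$ you would need $\mu=\sum_j p_j (T_j)_*\mu$, i.e.\ stationarity of $\mu=(\pi_2)_*\nu$ --- but stationarity of the second marginal is not automatic from $F$-invariance (it is equivalent to $\nu([j]\times A)=p_j\mu(A)$ on the relevant sets, i.e.\ exactly the product structure you are trying to establish), and in your own outline stationarity is only deduced \emph{after} the fibers are shown constant. A second, related, gap is the final step: ruling out a positive constant via ``strictness of the $L^1$-contraction of the $P_j$ on the mean-zero subspace'' is not a property Perron--Frobenius operators of non-singular maps have in general (they are contractions on $L^1$, but not strict on mean-zero functions in one step), and no argument specific to the maps at hand is given; you yourself flag this as the hardest point, so the proof is not closed. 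A standard way to avoid both problems (and closer in spirit to the cited sources) is not to compare $\nu_\omega$ with a putative stationary $\mu$ at all, but to prove $\omega$-independence of the disintegration directly: pass to the natural extension over the two-sided shift, where equivariance forces the fiber measures to be measurable with respect to the past coordinates, while absolute continuity of $\nu$ with respect to $\mathbb P\times\lambda$ forces them to be measurable with respect to the future; independence of past and future under the Bernoulli measure then gives $\nu_\omega=\mu$ a.e., and stationarity of $\mu$ follows as in your easy direction.
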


To find measures $\mu$ that satisfy \eqref{eqn9} and are absolutely continuous w.r.t.~$\lambda$ one can use the {\em Perron-Frobenius operator}. For maps $T: I \rightarrow I$ on an interval $I$ that are piecewise strictly monotone and $C^1$ the Perron-Frobenius operator $\mathcal P_T: L^1(I,\lambda) \rightarrow L^1(I,\lambda)$ is given by
\begin{equation}\label{q:pfd}
\mathcal P_T h (x) = \sum_{y \in T^{-1}\{x\}} \frac{h(y)}{|DT(y)|}.
\end{equation}
A non-negative function $\varphi \in L^1(I,\lambda)$ is a fixed point of $\mathcal P_T$ if and only if the measure $\mu$ given by $\mu(A) = \int_A \varphi \, d\lambda$ for each Borel set $A$ is an invariant measure for $T$. For a finite family $\{ T_j: [0,1] \to [0,1] \}_{j \in \Sigma}$ of good and bad maps and a positive probability vector $\mathbf p = (p_j)_{j \in \Sigma}$ the Perron-Frobenius operator $\mathcal{P}_{F,\mathbf p}: L^1(I,\lambda) \rightarrow L^1(I,\lambda)$ associated to the skew product $F$ and the vector $\mathbf p$ is given by
\begin{align}\label{eqn3.22}
\mathcal{P}_{F,\mathbf p}h(x) = \sum_{j \in \Sigma} p_j \mathcal P_{T_j} h (x),
\end{align}
where each $\mathcal{P}_{T_j}$ is as in \eqref{q:pfd}. A non-negative function $\varphi \in L^1([0,1],\lambda)$ is a fixed point of $\mathcal{P}_{F,\mathbf p}$ if and only if it provides an $F$-invariant measure $\mathbb{P} \times \mu$ where $\mathbb P$ denotes the $\mathbf p$-Bernoulli measure on $\Sigma^{\mathbb{N}}$ and $\mu$ is given by $\mu(A) = \int_A \varphi \, d\lambda$ for each Borel set $A$.

\subsection{Induced systems, Jacobian and Young towers}\label{s:youngtower}
Let $\mathcal F$ be the product $\sigma$-algebra on $\Sigma^\mathbb N \times [0,1]$ given by the Borel $\sigma$-algebra's on both coordinates. We call the set on which we will induce $Y \in \mathcal F$. It will be defined later. The \emph{first return time map} $\varphi: Y \rightarrow \mathbb{N}$ is defined as
\begin{align*}
\varphi(\omega,x) = \inf\{n \geq 1 : F^n(\omega,x) \in Y\}.
\end{align*}

\begin{lemma}[Kac's Lemma, see e.g.~1.5.5.~in \cite{Aar97}]\label{l:kac}
Let $m$ be an ergodic invariant and finite measure for $F$ on $(\Sigma^\mathbb N \times [0,1],\mathcal F)$ with $m(Y) > 0$. Then $\int_Y \varphi \, dm = m(Y)$.
\end{lemma}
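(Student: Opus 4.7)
The plan is to establish the standard Kac identity by a first-entry decomposition combined with $F$-invariance of $m$. Note that the conclusion in \cite[1.5.5]{Aar97} reads $\int_Y \varphi\,dm = m(\Sigma^{\mathbb N}\times [0,1])$, which is what the argument below delivers (so the statement as written presumably contains a typo on the right-hand side). Write $X = \Sigma^{\mathbb N}\times[0,1]$ and, for $n \geq 1$, set $Y_n = \{z \in Y : \varphi(z) = n\}$, so that $Y = \bigsqcup_{n \geq 1} Y_n$ and Tonelli gives
$$\int_Y \varphi \, dm \;=\; \sum_{n \geq 1} n\, m(Y_n) \;=\; \sum_{k \geq 0} m(\{\varphi > k\}).$$

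Next, I would introduce the first-entry sets
$$A_k \;=\; \{z \in X : F^i(z) \notin Y \text{ for } 0 \leq i < k \text{ and } F^k(z) \in Y\}, \qquad k \geq 0,$$
with $A_0 = Y$. These are pairwise disjoint by construction, and their union $Y^{*} = \bigcup_{k\ge 0} A_k$ is precisely the set of points whose $F$-orbit eventually meets $Y$. Since $Y^{*}$ differs from an $F$-invariant set by a null set and has $m$-measure at least $m(Y) > 0$, ergodicity of $m$ forces $m(Y^{*}) = m(X)$, so $\sum_{k \geq 0} m(A_k) = m(X)$.

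The heart of the proof is the identification $m(A_k) = m(\{\varphi > k\})$ for every $k \geq 0$. This follows by induction from the disjoint decomposition $F^{-1}A_{k-1} = A_k \sqcup Y_k$, which is immediate from the definitions: an $F$-preimage of a point in $A_{k-1}$ either lies outside $Y$, pushing its first entry time up to $k$, or lies inside $Y$ and then has first return time exactly $k$. Combining this with invariance yields the telescoping relation $m(A_{k-1}) = m(A_k) + m(Y_k)$, hence
$$m(A_k) \;=\; m(Y) - \sum_{j=1}^{k} m(Y_j) \;=\; m(\{\varphi > k\}).$$

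Putting the three ingredients together,
$$\int_Y \varphi\,dm \;=\; \sum_{k \geq 0} m(\{\varphi > k\}) \;=\; \sum_{k \geq 0} m(A_k) \;=\; m(X).$$
The only conceptually substantive step is the passage $m(Y^{*}) = m(X)$, which is precisely where ergodicity of $m$ (and the hypothesis $m(Y)>0$) is used; finiteness of $m$ then upgrades "$F$-invariant subset of positive measure" to "full measure" unambiguously. Everything else is bookkeeping with disjoint unions and invariance, and I do not anticipate any real obstacle beyond verifying the disjointness in $F^{-1}A_{k-1} = A_k \sqcup Y_k$ carefully.
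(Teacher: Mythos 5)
Your proof is correct and is the classical ``sweep-out'' argument for Kac's formula: decompose by first-entry time, use that the first-entry sets $A_k$ tile an almost-invariant set of full measure, and identify $m(A_k)$ with the tail $m(\{\varphi>k\})$ via $F^{-1}A_{k-1}=A_k\sqcup Y_k$ and invariance. The paper does not give its own proof of this lemma but only cites \cite[1.5.5]{Aar97}, so there is nothing to compare step-by-step; your argument is exactly the standard one that reference presents. You are also right that the right-hand side as printed, $m(Y)$, must be a typo for $m(\Sigma^{\mathbb N}\times[0,1])$: the correct Kac identity is $\int_Y\varphi\,dm=m(\Sigma^{\mathbb N}\times[0,1])$, and this is precisely the finiteness that the paper invokes in the proof of Theorem~\ref{result1a} to derive a contradiction.

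One small point worth making explicit is that the opening decomposition $Y=\bigsqcup_{n\ge 1}Y_n$ silently assumes $\varphi<\infty$ everywhere on $Y$ (or at least $m$-a.e.). In the setting of this paper that is automatic, since \eqref{q:phikappal} shows $\varphi(\omega,x)<\infty$ for every $(\omega,x)\in Y$ by construction of $Y$. In the general Kac setting it follows from Poincar\'e recurrence for the finite invariant measure $m$, or, within your own argument, a posteriori: since the $A_k$ are pairwise disjoint and $\sum_k m(A_k)=m(\Sigma^{\mathbb N}\times[0,1])<\infty$, one has $m(A_k)\to 0$, and combining this with $m(A_k)=m(Y)-\sum_{j\le k}m(Y_j)$ forces $m(\{\varphi=\infty\})=0$. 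Either way no real gap remains.
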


A set $A \in \mathcal{F}$ is called an \emph{invertibility domain} of $F$ if the restriction $F|_A: A \rightarrow F(A)$ is bijective with measurable inverse. The following property gives conditions under which the Jacobian of $F$ with respect to $m$ exists.

\begin{prop}[Proposition 9.7.2 in \cite{viana16}]\label{prop2.1a} Suppose that $\mathcal I$ is a partition of $\Sigma^\mathbb N \times [0,1]$ by invertibility domains $A$ of $F$. Furthermore, suppose that $m$ on $(\Sigma^\mathbb N \times [0,1],\mathcal F)$ is a finite measure and that for each $A \in \mathcal I$ and $E \in \mathcal{F} \cap A$ it holds that $m(F(E)) = 0$ if $m(E) = 0$. Then there exists a ($m$-a.e.) unique nonnegative function $J_{m}F \in L^1(\Sigma^\mathbb N \times [0,1],m)$, called the \emph{Jacobian of $F$ w.r.t.~$m$}, such that
\begin{align*}
m(F(E)) = \int_E J_{m}F\, dm
\end{align*}
for each $E \in \mathcal F \cap A$ and $A \in \mathcal I$.
\end{prop}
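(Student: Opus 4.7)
The plan is to construct $J_m F$ by applying the Radon--Nikodym theorem piecewise on each invertibility domain $A\in\mathcal I$ and then gluing the local densities into a single measurable function. For each $A\in\mathcal I$, first define a set function $\nu_A:\mathcal F\cap A\to[0,\infty)$ by $\nu_A(E):=m(F(E))$. Since $F|_A$ is bijective with measurable inverse, $F(E)\in\mathcal F$ for every $E\in\mathcal F\cap A$ and disjoint subsets of $A$ have disjoint images under $F$, so countable additivity of $m$ on the image side transfers to $\nu_A$ on $A$. Finiteness is immediate from $\nu_A(A)=m(F(A))\leq m(\Sigma^{\mathbb N}\times [0,1])<\infty$. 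The non-singularity hypothesis $m(E)=0\Rightarrow m(F(E))=0$ is precisely $\nu_A\ll m|_A$, so the Radon--Nikodym theorem yields a nonnegative $J_A\in L^1(A,m|_A)$ with
\[
m(F(E))=\int_E J_A\, dm \quad \text{for every } E\in\mathcal F\cap A.
\]

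Next I would glue these densities by setting $J_m F|_A:=J_A$ for every $A\in\mathcal I$. Because $\mathcal I$ is a partition this is unambiguous, and measurability on the whole space follows atom by atom since each $A$ is measurable. The defining identity $m(F(E))=\int_E J_m F\, dm$ for $E\in\mathcal F\cap A$ then holds block by block by construction. For global integrability one computes
\[
\int J_m F\, dm=\sum_{A\in\mathcal I} \nu_A(A)=\sum_{A\in\mathcal I} m(F(A)).
\]
This sum is finite provided that $\mathcal I$ is countable and that the family $\{F(A):A\in\mathcal I\}$ has bounded covering multiplicity, i.e.\ that $F$ is essentially finite-to-one. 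In the setting of the present paper both facts are automatic: $\mathcal I$ is countable because $\Sigma$ is finite and each $T_j$ is piecewise monotone with finitely many branches, and the multiplicity is at most $N\cdot\max_{j\in\Sigma}\#T_j^{-1}\{\cdot\}<\infty$, so $J_m F\in L^1(\Sigma^{\mathbb N}\times[0,1],m)$.

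Uniqueness follows from the Radon--Nikodym uniqueness applied on each atom: any two candidate Jacobians $J,J'$ satisfying the defining identity must agree $m|_A$-a.e.\ on every $A\in\mathcal I$, and hence $m$-a.e.\ on the whole space. The main obstacle is not any individual step but rather the gluing phase: one must simultaneously verify that $\nu_A$ is genuinely $\sigma$-additive (using injectivity of $F|_A$), that the countability of $\mathcal I$ makes the piecewise definition of $J_m F$ measurable on the entire space, and that finite-to-oneness of $F$ keeps the summed $L^1$-norms bounded. Once these structural properties are in hand the per-atom Radon--Nikodym application is routine and the proposition follows.
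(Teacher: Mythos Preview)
The paper does not prove this proposition at all: it is quoted directly from \cite{viana16} as a black-box preliminary, so there is no in-paper argument to compare against. Your Radon--Nikodym approach is the standard one (and is essentially the argument given in \cite{viana16}): build the pushforward measure $\nu_A=m\circ F|_A$ on each invertibility domain, use the non-singularity hypothesis to get absolute continuity, take densities, and patch. Your identification of the extra structural assumptions needed for global measurability and the $L^1$ bound (countability of $\mathcal I$, bounded preimage multiplicity) is accurate and these are indeed implicit in how the proposition is applied later in the paper.
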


We have the following change of variables formula.

\begin{lemma}[Lemma 9.7.4 in \cite{viana16}]\label{lemma2.1a} Under the assumptions of Proposition \ref{prop2.1a}, for each $A \in \mathcal I$ and $E \in \mathcal F \cap A$,
\[ \int_{F(E)} h \, dm = \int_E (h \circ F) J_{m} F \,  dm\]
for any measurable function $h: F(E) \rightarrow \mathbb{R}$ such that the integrals are defined (possibly $\pm \infty$).
\end{lemma}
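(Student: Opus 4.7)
The plan is to establish the change-of-variables formula by the standard measure-theoretic bootstrapping: first verify the identity for indicator functions, then extend to simple functions by linearity, then to nonnegative measurable functions by monotone convergence, and finally to arbitrary measurable functions by taking positive and negative parts. The hypotheses of Proposition~\ref{prop2.1a} are used precisely to guarantee that the Jacobian $J_m F$ exists and satisfies $m(F(E')) = \int_{E'} J_m F\, dm$ for every $E' \in \mathcal F \cap A$, so the indicator-function case will follow almost immediately; the work is in making sure all preimage sets stay inside a single invertibility domain so Proposition~\ref{prop2.1a} actually applies.

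Fix $A \in \mathcal I$ and $E \in \mathcal F \cap A$. Since $F|_A : A \to F(A)$ is a bijection with measurable inverse, for every measurable $B \subseteq F(E)$ the set $E_B := (F|_A)^{-1}(B) \subseteq E$ is measurable, lies in $\mathcal F \cap A$, and satisfies $F(E_B) = B$. Apply Proposition~\ref{prop2.1a} to $E_B$:
\[
m(B) \;=\; m(F(E_B)) \;=\; \int_{E_B} J_m F\, dm.
\]
Injectivity of $F|_A$ on $A$ gives $\mathbf 1_{E_B}(x) = \mathbf 1_B(F(x))$ for all $x \in A$, so the right-hand side equals $\int_E (\mathbf 1_B \circ F)\, J_m F\, dm$. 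This proves the formula for $h = \mathbf 1_B$, and then for every nonnegative measurable simple function $h$ on $F(E)$ by linearity in the coefficients.

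For a general nonnegative measurable $h : F(E) \to [0,\infty]$, pick an increasing sequence $h_k$ of nonnegative simple functions with $h_k \nearrow h$ pointwise. Then $h_k \circ F \nearrow h \circ F$ pointwise on $E$, and $(h_k \circ F)\, J_m F \nearrow (h \circ F)\, J_m F$ since $J_m F \geq 0$. Two applications of the monotone convergence theorem (one on $F(E)$, one on $E$) yield
\[
\int_{F(E)} h\, dm \;=\; \lim_{k \to \infty} \int_{F(E)} h_k\, dm \;=\; \lim_{k \to \infty} \int_E (h_k \circ F)\, J_m F\, dm \;=\; \int_E (h \circ F)\, J_m F\, dm,
\]
with both sides possibly $+\infty$. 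Finally, for a general measurable $h$ write $h = h^+ - h^-$ and apply the nonnegative case to $h^+$ and $h^-$ separately; the identity for $h$ follows by subtraction whenever at least one of $\int_{F(E)} h^{\pm}\, dm$ is finite, which is exactly the hypothesis that the integrals be defined.

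The only genuine subtlety is the step where we assert $E_B \in \mathcal F \cap A$ and $F(E_B) = B$; this is where the full force of the invertibility-domain assumption is used, together with measurability of $(F|_A)^{-1}$, so that Proposition~\ref{prop2.1a} can be invoked on the preimage set. Once that measurability bookkeeping is in place, the remainder of the argument is the routine extension from indicators to general measurable functions and carries no additional obstacle.
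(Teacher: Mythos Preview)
Your proof is correct and follows the standard measure-theoretic bootstrapping argument (indicators $\to$ simple $\to$ nonnegative via monotone convergence $\to$ general via positive/negative parts). The paper itself does not give a proof of this lemma; it is quoted directly from \cite{viana16} (Lemma~9.7.4) without argument, so there is no in-paper proof to compare against. Your approach is exactly the one used in Viana--Oliveira, and the one point you flag as a subtlety---that $E_B=(F|_A)^{-1}(B)$ lies in $\mathcal F\cap A$ with $F(E_B)=B$, relying on measurability of the inverse of $F|_A$---is indeed the only place the invertibility-domain hypothesis is genuinely used.
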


In the situation of Kac's Lemma the {\em induced transformation} $F_Y: Y \to Y$ given by $F_Y(x) = F^{\varphi(x)}(x)$ is well defined almost everywhere. On the {\em inducing domain} $Y$ one can construct a Young tower. For the convenience of the reader we briefly recall this construction, which is outlined in general and in more detail in \cite{Y99}. Let $\mathcal P$ be a countable partition of $Y$ into measurable sets on which the first return time function $\varphi$ is constant, called a {\em first return time partition}. Let $\varphi_P$ be the constant value that $\varphi$ assumes on $P$. The tower is defined by
\begin{equation*}
\Delta = \{ (z,n) \in Y \times \{ 0,1,2 \ldots \} \, : \, n < \varphi(z)\}.
\end{equation*}
The $l$-th level of the tower is $\Delta_l := \Delta \cap \{ n = l\}$ and for each $P \in \mathcal P$ let $\Delta_{l,P} = \Delta_l \cap \{ z \in P \}$. We equip $\Delta$ with the Borel $\sigma$-algebra $\mathcal B$ and let $m$ be the unique measure on $\Delta$ that corresponds to $\mathbb P \times \lambda$ on each level of the tower. On $\Delta$ define the function
\begin{equation*}
G(z, n) = \begin{cases}
(z, n+1), & \text{if } n+1 < \varphi_Y(z),\\
(F_Y(z),0), & \text{otherwise}.
\end{cases}
\end{equation*}
Let the induced map $G^{\varphi}: \Delta_0 \to \Delta_0$ be defined by $G^{\varphi} (\upsilon) = G^{\varphi(z)}(\upsilon)$, where $z \in Y$ is such that $\upsilon = (z,0)$. We identify $G^{\varphi}$ with $F^{\varphi}$ by identifying $\Delta_0$ with $Y$ and using the correspondence $G^{\varphi}(z,0) = (F^{\varphi}(z),0)$. The {\em separation time} is defined for each $(z_1, l_1), (z_2,l_2) \in \Delta$ by $s((z_1,l_1), (z_2,l_2)) =0$ if $l_1 \neq l_2$ and otherwise letting $s((z_1,l_1), (z_2,l_2)) =s(z_1,z_2)$ be given by
\begin{equation}\label{q:septimetower}
s(z_1,z_2) = \inf \{ n \ge 0 \, : \, (G^{\varphi})^n (z_1,0), \, (G^{\varphi})^n (z_2,0) \text{ lie in distinct } \Delta_{0,P}, \, P \in \mathcal P\}.
\end{equation}

\vskip .2cm
The set-up from \cite{Y99} assumes the following conditions on $\Delta$ and $G$:
\begin{itemize}
\item[(t1)] $gcd\{ \varphi_P \} =1$;
\item[(t2)] All the sets in the construction above are measurable and $m(Y) < \infty$.
\item[(t3)] For each $P \in \mathcal P$ the top level $\Delta_{\varphi_P-1,P}$ above $P$ is mapped bijectively onto $\Delta_0 = Y \times \{0\}$ under the map $G$;
\item[(t4)] The partition $\eta = \{ \Delta_{l, P} \, : \, P \in \mathcal P, \,  0 \le l \le \varphi_P-1 \}$ generates $\mathcal B$.
\item[(t5)] The restrictions $G^\varphi|_{\Delta_{0,P}}: \Delta_{0,P} \to \Delta_0$ and their inverses are non-singular with respect to $m$, so that the Jacobian $JG^\varphi$ with respect to $m$ exists and is $>0$ $m$-a.e.
\item[(t6)] There are constants $C>0$ and $\beta\in (0,1)$ such that for each $P \in \mathcal P$ and all $(z_1,0), (z_2,0) \in \Delta_{0,P}$,
\[ \left| \frac{J_mG^\varphi (z_1,0)}{J_mG^\varphi (z_2,0)}-1 \right| \le C \beta^{s(G^\varphi (z_1,0), G^\varphi(z_2,0))}.\]
\end{itemize}

Under these conditions \cite{Y99} yields the following results on the existence of invariant measures for the map $G$ on the tower $\Delta$ and decay of correlations. Define the following function space on $\Delta$:
\begin{equation}\label{q:cbeta}
\mathcal{C}_{\delta} = \Big\{\hat f : \Delta \rightarrow \mathbb{R} \, \Big| \, \sup \Big\{ \frac{|\hat f(\upsilon_1)-\hat f(\upsilon_2)|}{\delta^{s(\upsilon_1,\upsilon_2)}} : \upsilon_1,\upsilon_2 \in \Delta, \upsilon_1 \neq \upsilon_2 \Big\} < \infty \Big\}.
\end{equation}
For $\upsilon \in \Delta$ let $\hat \varphi(\upsilon):= \inf \{ n \ge 0 \, : \, G^n (\upsilon) \in \Delta_0 \}$. 

\begin{theorem}[Theorem 1 and 3 from \cite{Y99}]\label{t:young1}
If (t1)--(t6) hold and $\int_Y \varphi \, dm < \infty$, then we have the following statements.\footnote{In \cite[Theorem 3]{Y99} statement (iv) above is only given for the case that $\delta = \beta$. Note however that the case $0 < \delta < \beta$
follows from $\mathcal{C}_{\delta} \subseteq \mathcal{C}_{\beta}$ and that the case $\beta < \delta < 1$ can be proven similarly
as \cite[Theorem 3]{Y99} by using the upper bound $C^+ \delta^{s(v_1,v_2)}$ for the left hand side of \eqref{q:densityC}.}
\begin{itemize}
\item[(i)] $G: \Delta \to \Delta$ admits an invariant probability measure $\nu$ that is absolutely continuous w.r.t.~$m$;
\item[(ii)] The density $\frac{d\nu}{dm}$ is bounded away from zero and is in $\mathcal C_\beta$ with $\beta$ as in (t6). Moreover, there is a constant $C^+>0$ such that for each $\Delta_{l,P}$ and each $\upsilon_1, \upsilon_2 \in \Delta_{l,P}$
\begin{equation}\label{q:densityC}
\left| \frac{\frac{d\nu}{ dm} (\upsilon_1)}{\frac{d\nu}{dm }(\upsilon_2)}-1 \right| \le C^+ \, \beta^{s(\upsilon_1,\upsilon_2)}.
\end{equation}
\item[(iii)] $G$ is exact, hence ergodic and mixing.
\item[(iv)] (Polynomial decay of correlations) If, moreover, $m(\{ \upsilon \in \Delta \, : \, \hat \varphi (\upsilon) >n \}) = O(n^{-\alpha})$ for some $\alpha >0$, then for all $\hat f \in L^\infty(\Delta,m)$, all $\delta \in (0,1)$ and all $\hat h \in \mathcal C_{\delta}$,
\[ \Big|\int_\Delta \hat f \circ G^n \cdot \hat h \, d\nu - \int_\Delta \hat f \, d\nu \int_\Delta \hat h \, d\nu \Big|  = O(n^{-\alpha}).\]
\end{itemize}
\end{theorem}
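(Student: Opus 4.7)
The plan is to analyse the transfer operator $\mathcal{L}$ of $G$ on $(\Delta,m)$ through its induced version $\hat{\mathcal{L}}$ on $\Delta_0$, which corresponds to the map $G^\varphi$. Indeed, conditions (t3)--(t6) say precisely that $G^\varphi$ is a full-branched Markov map on the partition $\{\Delta_{0,P}\}_{P \in \mathcal{P}}$, uniformly expanding in the separation-time metric, with Jacobian whose log-distortion is controlled by (t6)---i.e.\ a Gibbs--Markov map. Conditions (t1)--(t2) and $\int_Y \varphi\, dm < \infty$ will then turn standard spectral results for $\hat{\mathcal{L}}$ into the four statements about $G$ and $\nu$.

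For parts (i) and (ii) I would introduce, for a suitable constant $K>0$, the cone
\[ \mathcal{C}_K = \Big\{ \psi:\Delta_0 \to (0,\infty) \, : \, \psi(z_1)/\psi(z_2) \le \exp\bigl(K\beta^{s(z_1,z_2)}\bigr) \text{ for all } z_1,z_2 \Big\}, \]
verify using (t6) on inverse branches that $\hat{\mathcal{L}}(\mathcal{C}_K) \subset \mathcal{C}_K$ with finite Hilbert-metric diameter, and apply Birkhoff's theorem to obtain a unique positive fixed function $\hat\varphi_0$, bounded away from $0$ and $\infty$ and satisfying the ratio bound \eqref{q:densityC} on $\Delta_0$; equivalently, one runs a Lasota--Yorke argument on $\mathcal{C}_\beta \cap L^1$ to obtain quasi-compactness of $\hat{\mathcal{L}}$ with a simple peripheral eigenvalue at $1$. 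I would then spread $d\hat\nu_0 := \hat\varphi_0\, dm|_{\Delta_0}$ over the tower by setting $\nu(A) = \hat\nu_0\bigl((G^{l})^{-1}A \cap \Delta_{0,P}\bigr)$ for each measurable $A \subset \Delta_{l,P}$ and normalising by $\int_Y \varphi\, dm_{\hat\nu_0}$, which is finite because $\hat\varphi_0$ is bounded and $\int_Y \varphi\, dm < \infty$. Checking $G$-invariance reduces to the trivial \emph{shift up} on levels $l < \varphi_P-1$ and to the identity $\hat{\mathcal{L}}\hat\varphi_0 = \hat\varphi_0$ at the top level. Propagating the ratio bound from $\Delta_0$ to higher levels via bounded distortion of $G^{l}$ along $\Delta_{0,P}$, together with the lower bound on $\hat\varphi_0$, gives (ii).

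For (iii) the cone contraction of $\hat{\mathcal{L}}$ combined with aperiodicity (t1) rules out any non-trivial peripheral spectrum on the $\mathcal{C}_\beta$-Banach space, so $\hat{\mathcal{L}}$ has a spectral gap and $G^\varphi$ is exact on $\Delta_0$. Because $\nu$-a.e.\ orbit under $G$ returns to $\Delta_0$ infinitely often (Kac's lemma applied to $(G,\nu)$ with $\nu(\Delta_0)>0$), exactness lifts from the base to the whole tower, and ergodicity and mixing follow.

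Part (iv) is the main obstacle. Here I would follow Young's coupling argument: given two probability densities $\psi_1,\psi_2$ in $\mathcal{C}_K$, match mass iteratively on successive returns of the tower, so that at step $n$ the matched portion has already been transported to $\nu$ and the unmatched mass is dominated by $m(\hat\varphi > n) = O(n^{-\alpha})$. Given $\hat f \in L^\infty(\Delta,m)$ and $\hat h \in \mathcal{C}_\delta$, write $\hat h = \hat h_0 + c$ with $c = \int \hat h\, d\nu$ and split $\hat h_0 = \hat h_+ - \hat h_-$; after normalising the measures $\hat h_\pm\, d\nu$ to probabilities with densities in a cone, bound $|Cor_n(\hat f,\hat h)|$ by $\|\hat f\|_\infty$ times the total-variation distance between their $n$-th pushforwards and $\nu$, which by the coupling is $O(n^{-\alpha})$. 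The footnote's extension from $\delta=\beta$ to arbitrary $\delta\in(0,1)$ is handled by $\mathcal{C}_\delta \subset \mathcal{C}_\beta$ when $\delta<\beta$ and by rerunning the same coupling with the weaker bound $C^+\delta^{s(v_1,v_2)}$ when $\delta>\beta$. The truly delicate technical step is the tail estimate for the coupling time itself: it is not identical to $\hat\varphi$ and must inherit the polynomial rate $O(n^{-\alpha})$ from it, which is achieved through the exponential matching ratio on each visit to $\Delta_0$ supplied by the cone contraction.
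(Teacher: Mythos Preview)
The paper does not prove this theorem at all: it is quoted verbatim as ``Theorem~1 and~3 from \cite{Y99}'' and used as a black box, with the only argument appearing being the brief footnote justifying the extension of (iv) from $\delta=\beta$ to arbitrary $\delta\in(0,1)$. So there is nothing in the paper to compare your proposal against.

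That said, your sketch is a fair outline of the ideas in Young's original paper. A few remarks on accuracy. For (i)--(ii), Young does not literally run a Birkhoff-cone or Lasota--Yorke argument; she constructs the invariant density directly by pushing forward $m|_{\Delta_0}$ under iterates of $G^\varphi$, using (t6) to control ratios uniformly and extracting a limit, then lifting to the tower exactly as you describe. For (iii), exactness in \cite{Y99} is obtained not from a spectral gap but by a direct argument showing that any tail-measurable set has full or zero measure; the spectral-gap route you propose would also work but is not what is done there. For (iv), your description of the coupling scheme is essentially correct and this is indeed the heart of \cite{Y99}; your identification of the delicate step---that the coupling time is not $\hat\varphi$ itself but a stopping time whose tail must be shown to inherit the polynomial rate from $m(\hat\varphi>n)$---is exactly the point where most of the work in \cite{Y99} goes. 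Your handling of the footnote matches what the paper says.
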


We will also use \cite[Theorem 6.3]{gouezel04} by Gou\"ezel which, when adapted to our setting, says the following.
\begin{theorem}[Theorem 6.3 from \cite{gouezel04}]\label{t:gouezel}
Let $\rho$ be an invariant and mixing probability measure for $F$ on $\Sigma^\mathbb N \times [0,1]$. Let $f \in L^{\infty} (\Sigma^\mathbb N \times [0,1], \mathbb P \times \mu)$ and  $h \in L^{1}(\Sigma^{\mathbb{N}} \times [0,1], \mathbb{P} \times \mu)$ be such that both $f$ and $h$ are identically zero on $(\Sigma^\mathbb N \times [0,1]) \backslash Y$. 
Assume that there is a $\delta \in (0,1)$ such that the following three conditions hold.
\begin{itemize}
\item[(g1)] There is a constant $C^*>0$ such that for each $n \ge 0$ and $z_1, z_2 \in \bigvee_{k=0}^{n-1} F^{-n}_Y \mathcal P$,
\[ \Big| \log \frac{J_\rho F^\varphi (z_1)}{J_\rho F^\varphi(z_2)} \Big| \le C^* \cdot \delta^n.\]
\item[(g2)] There is a $\zeta >1$ such that $\rho (\varphi >n) = O(n^{-\zeta})$.
\item[(g3)] $\sup \big\{ \frac{|h(z_1)-h(z_2)|}{\delta^{s(z_1, z_2)}} \, : \, z_1, z_2 \in Y, \, z_1 \neq z_2 \big\} < \infty$.
\end{itemize}
Then there is a constant $\tilde C >0$ such that 
\[ \Big| Cor_n (f,h) - \Big( \sum_{k > n} \rho (\varphi >k) \Big) \int f \, d\rho \int h \, d\rho \Big| \le \tilde C \cdot K_\zeta(n),\]
where
\[ K_\zeta(n) = \begin{cases}
n^{-\zeta}, & \text{if } \zeta >2,\\
\frac{\log n}{n^2}, & \text{if } \zeta =2,\\
n^{2-2\zeta}, & \text{if } \zeta \in (1,2).
\end{cases}\]
\end{theorem}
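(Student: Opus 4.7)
The plan is to apply Gou\"ezel's Theorem~\ref{t:gouezel} to $F$, the measure $\rho = \mathbb{P} \times \mu$ from Theorem~\ref{result1b}, and the pair $(f,h)$. The inducing domain $Y$ built in Section~\ref{sec4} is of the form $Y = \Sigma^{\mathbb{N}} \times Y_0$ with $Y_0 \supseteq (\tfrac{1}{2},\tfrac{3}{4})$, so the hypotheses on the supports of $f$ and $h$ guarantee that both vanish outside $Y$. Condition (g1) is the bounded distortion estimate (t6) for $F^{\varphi}$ transported back to $Y$; condition (g3) is immediate from $h \in \mathcal{H}_\alpha$ because two points with large separation time $s(z_1,z_2)=n$ share both a long $\omega$-prefix and, by the uniform expansion of the induced map, a small $x$-cylinder, giving $d(z_1,z_2) \lesssim \tilde\delta^n$ and hence $|h(z_1)-h(z_2)| \lesssim \delta^n$ for $\delta = \tilde\delta^\alpha$. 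Condition (g2) is the upper tail bound $\rho(\varphi > n) = O(n^{-\zeta})$ with $\zeta > 1$: the Young-tower tail estimates of Section~\ref{sec4} (which drive the upper bound of Theorem~\ref{result2a}) yield $\rho(\varphi > n) = O(n^{\gamma_1-1+\varepsilon})$ for every $\varepsilon > 0$, so any $\zeta < 1-\gamma_1$ is admissible.

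Gou\"ezel's theorem then gives
\[ Cor_n(f,h) = \Big(\sum_{k>n}\rho(\varphi>k)\Big)\cdot \int f\,d\rho \int h\,d\rho + O(K_\zeta(n)),\]
and, using $\int f\,d\rho\cdot \int h\,d\rho > 0$, proving $|Cor_n(f,h)| = \Omega(n^{\gamma_2})$ reduces to (i) a lower bound $\sum_{k>n}\rho(\varphi>k) = \Omega(n^{\gamma_2})$ on the main term and (ii) the error being of smaller order. The heart of the argument is (i). For each $b \in \Sigma_B$ put $\Sigma^{\ast}_b := \{j \in \Sigma_B : \ell_j \ge \ell_b\}$, which has $\mathbb{P}$-probability $\pi_b$. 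Fix a positive-$\mu$-measure subinterval $Y_0^{\ast} \subseteq Y_0$ on which $z_1(x) := \tfrac{1}{2} - (2x-1)$ is bounded below by some $\eta > 0$. For any $(\omega,x)$ with $x \in Y_0^{\ast}$ and $\omega_2,\ldots,\omega_{k+1} \in \Sigma^{\ast}_b$, iterating the identity $\tfrac{1}{2} - L_{\omega_{i+1}}(y) = 2^{\ell_{\omega_{i+1}}-1}(\tfrac{1}{2}-y)^{\ell_{\omega_{i+1}}}$ gives $\tfrac{1}{2} - L_\omega^{k+1}(x) \le C\, z_1^{\ell_b^k}$. Regardless of the continuation $\omega_{k+2},\omega_{k+3},\ldots$, the orbit must then perform at least $c\,\ell_b^k$ further iterations of the right branch $R$ before it can return to $Y_0$: a bad map only pushes the orbit closer to $\tfrac{1}{2}$, and after the first good map is eventually applied, the resulting distance to $1$ is at most $z_1^{r\ell_b^k}$ and doubles at each subsequent step under $R$. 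Taking $k = \lceil \log n/\log\ell_b\rceil$ therefore produces a set of $\rho$-measure $\gtrsim \mu(Y_0^{\ast})\,\pi_b^k \asymp n^{\log\pi_b/\log\ell_b}$ on which $\varphi > n$. Maximising over $b$ yields $\rho(\varphi > n) = \Omega(n^{\gamma_2-1})$ and, since $\gamma_2 - 1 < -1$ (because $\gamma_2 \le \gamma_1 < 0$), summation gives $\sum_{k>n}\rho(\varphi>k) = \Omega(n^{\gamma_2})$.

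For (ii), choose $\zeta$ arbitrarily close to $1-\gamma_1$ from below. If $\gamma_1 < -1$ then $\zeta > 2$ and $K_\zeta(n) = n^{-\zeta}$ has order $n^{\gamma_1 - 1 + \varepsilon}$, which is $o(n^{\gamma_2})$ precisely under the assumption $\gamma_2 > \gamma_1 - 1$. If $-1 \le \gamma_1 < 0$ then $\zeta \in (1,2]$ and $K_\zeta(n)$ has order $n^{2\gamma_1+\varepsilon}$ (with a harmless $\log n$ factor at the boundary $\gamma_1 = -1$, absorbed by strictness of the inequality $\gamma_2 > 2\gamma_1$), which is $o(n^{\gamma_2})$ precisely under $\gamma_2 > 2\gamma_1$. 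Combining (i) and (ii) yields the desired $|Cor_n(f,h)| = \Omega(n^{\gamma_2})$. The main obstacle is the geometric construction in (i): one must verify that the polynomial proximity $z_1^{\ell_b^k}$ to $\tfrac{1}{2}$ really forces $\Omega(\ell_b^k)$ further iterations to return to $Y_0$ for \emph{every} continuation of $\omega$, which requires combining the explicit form of the good and bad branches with the one-dimensional geometry of the right branch $R$ developed in Section~\ref{sec3}.
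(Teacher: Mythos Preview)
There is a basic mismatch here: the statement you were asked to prove is Theorem~\ref{t:gouezel}, which in the paper is simply a \emph{quoted} result (Theorem~6.3 of \cite{gouezel04}) stated in the preliminaries; the paper gives no proof of it. Your proposal does not attempt to prove Gou\"ezel's theorem either. What you have written is instead a proof sketch of Theorem~\ref{result2b}, i.e.\ of the polynomial lower bound on correlations, which \emph{uses} Theorem~\ref{t:gouezel} as a black box. So as a proof of the stated theorem, the proposal is simply off target.

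If one reads your text as a proof of Theorem~\ref{result2b}, it follows the same overall strategy as the paper: verify (g1)--(g3), apply Theorem~\ref{t:gouezel}, and then combine a lower bound $\sum_{k>n}\rho(\varphi>k)=\Omega(n^{\gamma_2})$ with the case split on $\zeta$ to show the error term is $o(n^{\gamma_2})$. Two points deserve correction, however. First, your claim ``$Y=\Sigma^{\mathbb N}\times Y_0$ with $Y_0\supseteq(\tfrac12,\tfrac34)$'' has the inclusion reversed; in the paper $Y$ equals $\Sigma^{\mathbb N}\times(\tfrac12,\tfrac34)$ up to a null set, which is what makes the support hypothesis on $f,h$ relevant. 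Second, (g1) is \emph{not} just (t6) ``transported back to $Y$'': (t6) controls the Jacobian with respect to the reference measure $m\sim\mathbb P\times\lambda$, whereas (g1) requires control with respect to $\rho=\mathbb P\times\mu$. The paper bridges this gap via Lemma~\ref{lemma4.3p}, which combines (t6) with the regularity \eqref{q:densityC} of $\tfrac{d\nu}{dm}$; your sketch omits this step. Your lower-bound construction for $\rho(\varphi>n)$ is in the same spirit as the paper's Lemma~\ref{lemma3.6c} (choosing $k\approx\log n/\log\ell_b$ bad symbols with $\ell_j\ge\ell_b$), and your handling of the error term matches the paper's case analysis on $\gamma_1$.
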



\subsection{Properties of good and bad maps}\label{subsec2.4}

Let $T:I \to \mathbb{R}$ be a $C^3$ map on an interval $I$. The {\em Schwarzian derivative} of $T$ at $x \in I$ with $DT(x) \neq 0$ is given by
\begin{equation*}
\mathbf ST(x) = \frac{D^3T(x)}{DT(x)} - \frac{3}{2} \Big(\frac{D^2 T(x)}{DT(x)}\Big)^2.
\end{equation*}
We say that $T$ {\em has non-positive Schwarzian derivative} if $DT(x) \neq 0$ and $\mathbf ST(x) \leq 0$ for all $x \in I$. One easily computes that each good map $T_g$ and each bad map $T_b$ has non-positive Schwarzian derivative when restricted to $[0, \frac12)$ or $[\frac12, 1]$. Since the composition of two maps with non-positive Schwarzian derivative again has non-positive Schwarzian derivative, 
any composition $T_{\omega}^n$ of good and bad maps has non-positive Schwarzian derivative. 

\medskip
The following well-known property of maps with non-positive Schwarzian will be of importance.

\medskip
\noindent {\bf Koebe Principle:} (see e.g.~\cite[Section 4.1]{dMvS93}) For each $c > 0$ there exists an $M^{(c)} > 0$ with the following property. Suppose that $J \subseteq I$ are two intervals and that $T: I \rightarrow \mathbb{R}$ has non-positive Schwarzian derivative. If both components of $T(I)\backslash T(J)$ have length at least $c \cdot \lambda(T(J))$, then
\begin{align}\label{eq2.5}
\Big|\frac{DT(x)}{DT(y)}-1\Big| \leq M^{(c)} \cdot \frac{|T(x)-T(y)|}{\lambda(T(J))}, \qquad \forall x,y \in J.
\end{align}
Note that the constant $M^{(c)}$ only depends on $c$ and not on the map $T$.

\medskip
For future reference we give a lemma on compositions of bad maps. It implies in particular that orbits starting in $[0,\frac{1}{2})$ converge superexponentially fast to $\frac{1}{2}$ under iterations of bad maps. Recall the notation of $L_{\mathbf b}$ from \eqref{eq19d}. 
\begin{lemma}\label{lemma2.1}
For each $x \in [0,\frac{1}{2})$ and $\mathbf b \in \Sigma_B^*$ we have
\begin{itemize}
\item[(i)] $L_{\mathbf b} (x) = \frac{1}{2}\big(1 -(1-2x)^{\ell_{\mathbf b}}\big)$, 
\item[(ii)] $L_{\mathbf b}^{-1} (x) = \frac{1}{2}\big(1 -(1-2x)^{\ell_{\mathbf b}^{-1}}\big)$.
\end{itemize}
\end{lemma}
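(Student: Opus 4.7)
The plan is to prove (i) by induction on the length $n = |\mathbf b|$ of the word, exploiting the algebraically convenient substitution $u = 1-2x$, and then to deduce (ii) by directly inverting the closed form from (i).

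For the base case $n=1$, I would just plug into the definition \eqref{eq1h}: writing $\tfrac{1}{2}-x = \tfrac{u}{2}$ with $u = 1-2x$, one computes
\[ L_b(x) = \tfrac{1}{2} - 2^{\ell_b - 1}\bigl(\tfrac{u}{2}\bigr)^{\ell_b} = \tfrac{1}{2} - \tfrac{u^{\ell_b}}{2} = \tfrac{1}{2}\bigl(1-(1-2x)^{\ell_b}\bigr), \]
which is (i) since $\ell_{\mathbf b}=\ell_b$. For the inductive step, assume (i) for all $\mathbf b \in \Sigma_B^n$ and let $\mathbf b' = \mathbf b c$ with $c \in \Sigma_B$. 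I first need to check that the composition in \eqref{eq19d} makes sense, i.e.\ that $L_{\mathbf b}(x) \in [0,\tfrac{1}{2})$ whenever $x \in [0,\tfrac{1}{2})$; but the inductive hypothesis gives exactly this because $(1-2x)^{\ell_{\mathbf b}} \in (0,1]$ for $x \in [0,\tfrac{1}{2})$. Then
\[ L_{\mathbf b'}(x) = L_c\bigl(L_{\mathbf b}(x)\bigr) = \tfrac{1}{2}\Bigl(1 - \bigl(1-2 L_{\mathbf b}(x)\bigr)^{\ell_c}\Bigr) = \tfrac{1}{2}\Bigl(1 - (1-2x)^{\ell_{\mathbf b}\ell_c}\Bigr), \]
using the base case applied to $L_c$ and then the inductive hypothesis. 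Since $\ell_{\mathbf b'} = \ell_{\mathbf b}\ell_c$, this closes the induction.

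For (ii), I would observe that the closed form in (i) shows $L_{\mathbf b}\colon [0,\tfrac12) \to [0,\tfrac12)$ is a strictly increasing bijection (the map $x \mapsto (1-2x)^{\ell_{\mathbf b}}$ is a continuous decreasing bijection of $[0,\tfrac12)$ onto $(0,1]$). Solving $y = \tfrac{1}{2}\bigl(1-(1-2x)^{\ell_{\mathbf b}}\bigr)$ for $x$ yields $1-2x = (1-2y)^{1/\ell_{\mathbf b}}$ and hence $x = \tfrac{1}{2}\bigl(1-(1-2y)^{\ell_{\mathbf b}^{-1}}\bigr)$, which is precisely (ii).

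There is no real obstacle; the only thing to be careful about is the domain check in the inductive step and the fact that $\ell_{\mathbf b}$ is a product of $\ell_{b_i}$'s so that exponents multiply correctly under composition. Everything else is routine algebra in the variable $u = 1-2x$.
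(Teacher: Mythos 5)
Your proof is correct and follows essentially the same approach as the paper: induction on the word length for (i), exploiting that $1-2L_b(x)=(1-2x)^{\ell_b}$, followed by direct inversion of the closed form for (ii). The only cosmetic difference is that the paper anchors the induction at the empty word $|\mathbf{b}|=0$ rather than at $|\mathbf{b}|=1$, and you are a bit more explicit about the domain check and the substitution $u=1-2x$.
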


\begin{proof}
Part (i) holds trivially if $|\mathbf b|=0$. Now suppose (i) holds for all $\mathbf b \in \Sigma_B^k$ for some $k \ge 0$. Let $\mathbf b b_{k+1} \in \Sigma_B^{k+1}$. Then $L_{\mathbf b}(x) \in [0,\frac{1}{2})$ and
\[ \begin{split}
L_{\mathbf b b_{k+1}} (x) =\ & \frac{1}{2}\Big(1 -(1-2L_{\mathbf b}(x))^{\ell_{b_{k+1}}}\Big)\\
=\ & \frac{1}{2}\Big(1-\big((1-2x)^{\ell_{\mathbf b}}\big)^{\ell_{b_{k+1}}}\Big) = \frac{1}{2}\Big(1 -(1-2x)^{\ell_{\mathbf b b_{k+1}}}\Big).
\end{split}\]
This proves (i), and (ii) follows easily from (i).
\end{proof}

We end this section with one more lemma on the properties of good and bad maps. For $g \in \Sigma_G$ the map $L_g: [0, \frac12) \to [0,1)$ is invertible and for $b \in \Sigma_B$ the map $L_b: [0, \frac12) \to [0, \frac12)$ is invertible. For all $j \in \Sigma$ the map $L_j$ is strictly increasing with continuous and decreasing derivative and, if $T_j$ is not the doubling map (i.e.~$j \in \Sigma_B$ or $j \in \Sigma_G$ with $r_g > 1$),
\[ \max_{x \in [0, \frac12)} DL_j (x) = DT_j(0) >1 \quad \text{and} \quad \lim_{x \uparrow \frac12} DL_j (x)  =0.\]
This allows us to define for each $g \in \Sigma_G$ with $r_g > 1$ the point $x_g$ as the point in $(0,\frac{1}{2})$ for which $DL_g(x_g) = 1$ and for each $b \in \Sigma_B$ the point $x_b$ as the point in $(0,\frac{1}{2})$ for which $DL_b(x_b) = 1$.

\begin{lemma}\label{lemma2.6d}
The following hold.
\begin{itemize}
\item[(i)] For each $g \in \Sigma_G$ it holds that $L_g^{-1} \big( \frac{1}{2} \big) \leq \frac{1}{4}$.
\item[(ii)] For each $g \in \Sigma_G$ with $r_g>1$ it holds that $L_g^{-1} \big( \frac{1}{2}\big)  < x_g$.
\item[(iii)] For each $b \in \Sigma_B$ it holds that $L_b^{-1} \big(\frac{1}{4}\big) < x_b$.
\end{itemize}
\end{lemma}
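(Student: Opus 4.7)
The plan is to exploit the explicit algebraic form of the left branches $L_g$ and $L_b$. From \eqref{eq1u} and \eqref{eq1h}, solving $L_g(y) = \tfrac12$ and $L_b(y) = \tfrac14$ reduces in each case to a single exponential identity and yields the closed forms
\[ \tfrac12 - L_g^{-1}(\tfrac12) = 2^{-(1+1/r_g)} \qquad \text{and} \qquad \tfrac12 - L_b^{-1}(\tfrac14) = 2^{-(1+1/\ell_b)}. \]
These two formulas are the engine driving all three parts.

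Part (i) is then an immediate comparison: since $r_g \ge 1$, we have $1+1/r_g \le 2$, so $\tfrac12 - L_g^{-1}(\tfrac12) \ge 2^{-2} = \tfrac14$, i.e., $L_g^{-1}(\tfrac12) \le \tfrac14$ (with equality exactly when $r_g=1$, consistent with $T_g$ being the doubling map in that case).

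For (ii) and (iii) I would leverage the strict monotonicity of $DL_g$ and $DL_b$ on $[0,\tfrac12)$ that is recorded just before the statement. Since $DL_g(x_g)=1$ and $DL_g$ is strictly decreasing, to conclude $L_g^{-1}(\tfrac12) < x_g$ it suffices to verify $DL_g(L_g^{-1}(\tfrac12)) > 1$; analogously for (iii) it suffices to check $DL_b(L_b^{-1}(\tfrac14)) > 1$. Substituting the pre-image formulas above into the derivatives $DL_g(x) = r_g \, 2^{r_g}(\tfrac12-x)^{r_g-1}$ and $DL_b(x) = \ell_b \, 2^{\ell_b-1}(\tfrac12-x)^{\ell_b-1}$, the exponents telescope and leave
\[ DL_g(L_g^{-1}(\tfrac12)) = r_g \cdot 2^{1/r_g}, \qquad DL_b(L_b^{-1}(\tfrac14)) = \tfrac{1}{2}\ell_b \cdot 2^{1/\ell_b}. \]
The first quantity is trivially greater than $1$ whenever $r_g > 1$, which settles (ii).

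The sole step with any bite is the inequality $\ell_b \cdot 2^{1/\ell_b} > 2$ for $\ell_b > 1$ required in (iii). This is a one-variable calculus check: the function $\varphi(\ell) = \ell \cdot 2^{1/\ell}$ satisfies $\tfrac{d}{d\ell} \log \varphi(\ell) = (\ell - \log 2)/\ell^2$, which is strictly positive on $[1,\infty)$, together with $\varphi(1) = 2$. Hence $\varphi(\ell_b) > 2$ for every $\ell_b > 1$, and (iii) follows. I expect this monotonicity verification to be the only non-routine step; everything else is direct evaluation of the explicit branch formulas.
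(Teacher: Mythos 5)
Your proof is correct, and for parts (ii) and (iii) it takes a genuinely different route from the paper. The paper computes the critical points $x_g = \tfrac12 - (r_g 2^{r_g})^{1/(1-r_g)}$ and $x_b = \tfrac12(1-\ell_b^{1/(1-\ell_b)})$ in closed form and then compares them algebraically to $L_g^{-1}(\tfrac12)$ and $L_b^{-1}(\tfrac14)$, reducing to the inequalities $2^{-1/x-1}>(x\,2^x)^{1/(1-x)}$ and $2^{-1/x}>x^{1/(1-x)}$ for $x>1$, which it asserts without proof. You instead never solve $DL=1$: you exploit the strict decrease of $DL_g$ and $DL_b$ on $[0,\tfrac12)$ (valid since the exponents $r_g-1>0$ and $\ell_b-1>0$) to reduce both parts to evaluating the derivative at the preimage points, a nice telescoping that gives $DL_g(L_g^{-1}(\tfrac12))=r_g\,2^{1/r_g}$ and $DL_b(L_b^{-1}(\tfrac14))=\tfrac12\ell_b\,2^{1/\ell_b}$. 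This makes (ii) immediate and reduces (iii) to the single inequality $\ell\,2^{1/\ell}>2$ for $\ell>1$, which you actually verify via a monotonicity-of-$\log$ argument. In fact the paper's asserted inequalities are algebraically equivalent to yours (raise them to the power $1-x$, which is negative, and simplify), so the substance is the same, but your derivative-based framing is cleaner and self-contained. The only caveat, which is cosmetic: the paper's preceding text says the derivative is ``decreasing,'' not ``strictly decreasing''; you implicitly use strictness, so it would be worth noting that strict monotonicity holds precisely in the cases $r_g>1$ and $\ell_b>1$ that are relevant to (ii) and (iii).
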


\begin{proof}
One can compute that
\begin{equation}\label{eq33f}
L_g^{-1} \Big(\frac{1}{2} \Big)  = \frac{1}{2}\big(1-2^{-1/r_g}\big)
\end{equation}
for all $ g  \in \Sigma_G$. If $r_g >1$, then
\[ x_g  =  \frac{1}{2} - \big(r_g \cdot 2^{r_g}\big)^{1/(1-r_g)}.\]
For each $b \in \Sigma_B$ we have
\begin{equation}\label{eq36f}
 L_b^{-1} \Big( \frac{1}{4} \Big) =  \frac{1}{2}\big(1-2^{-1/\ell_b}\big) \quad \text{ and } \quad  x_b  = \frac{1}{2}\big(1-\ell_b^{1/(1-\ell_b)}\big).
 \end{equation}
Since $r_g \geq 1$ and thus $2^{-1/r_g} \geq \frac{1}{2}$ for each $g \in \Sigma_G$, (i) follows. Furthermore, one can show that $2^{-1/x-1} > (x \cdot 2^x)^{1/(1-x)}$ and $2^{-1/x} > x^{1/(1-x)}$ hold for all $x > 1$, which give (ii) and (iii), respectively.
\end{proof}

\section{Inducing the random map on $(\frac{1}{2},\frac{3}{4})$} \label{sec3}




\subsection{The induced system and the infinite measure case} \label{subsec3.1}

Define
\[ \begin{split}
\tilde{\Omega} =\ & \{\omega \in \Sigma^{\mathbb{N}}: \omega_i \in \Sigma_G \text{ for infinitely many $i \in \mathbb{N}$}\},\\
Y =\ & \Big\{(\omega,x) \in \tilde{\Omega} \times \Big(\frac{1}{2},\frac{3}{4}\Big) : T_{\omega}^n(x) \neq \frac{1}{2} \text{ for all $n \in \mathbb{N}$}\Big\}.
\end{split}\]
The set $Y$, which equals $\Sigma^{\mathbb{N}} \times (\frac{1}{2},\frac{3}{4})$ up to a set of measure zero, will be our inducing domain. Recall the definition of the first return time function
\[ \varphi: Y \to \mathbb N, \, (\omega,x) \mapsto  \inf\{ n \geq 1\, :\,  F^n(\omega,x) \in Y\}.\]
For each $(\omega,x) \in Y$, the following happens under iterations of $F$. Firstly, $T_\omega(x) = R(x) \in (0, \frac12)$. By Lemma \ref{lemma2.6d} there is an $a > 1$ such that $DT_g(y) \geq a$ for all $y \in (0,L_g^{-1} \big(\frac12\big)]$ and $g \in \Sigma_G$. Combining this with Lemma \ref{lemma2.1} yields by definition of $Y$ that
\begin{equation}\label{q:to12}
\kappa(\omega,x) := \inf\Big\{ n \geq 1\, :\, T_{\omega}^n(x) > \frac12 \Big\} < \infty.
\end{equation}
Note that $\omega_{\kappa(\omega,x)} \in \Sigma_G$. Then, again since $(\omega,x) \in Y$, so $T_{\omega}^n(x) \neq \frac12$ for all $n \in \mathbb{N}$,
\begin{equation}\label{q:l}
l(\omega,x):= \inf \Big\{ n \ge 0 \, : \, T_\omega^{\kappa (\omega,x)+ n} \in \Big( \frac12, \frac34 \Big) \Big\} < \infty
\end{equation}
and $T^{\kappa(\omega,x)+l(\omega,x)}_\omega(x) = R^{l(\omega,x)} \circ T_\omega^{\kappa(\omega,x)}(x).$ Thus
\begin{equation}\label{q:phikappal}
\varphi(\omega,x) = \kappa(\omega,x) + l(\omega,x) < \infty
\end{equation}
for all $(\omega,x) \in Y$. We will first derive an estimate for $l(\omega,x)$.

\vskip .2cm
For each $g \in \Sigma_G$ let $J_g = (L_g^{-1}(\frac{1}{2}),\frac{1}{2})$. The intervals $J_g$ are such that if $T_\omega^n (x) \in J_g$ and $\omega_{n+1}=g$, then $T_\omega^{n+1}(x) > \frac12$. For each $b \in \Sigma_B$ let $J_b = [x_b,\frac{1}{2})$ with $x_b$ as defined above Lemma \ref{lemma2.6d}. Define 
\begin{align}\label{eq32a}
m(\omega,x) := \inf\{ n \geq 1\, :\, T_{\omega}^{n}(x) \in J_{\omega_{n+1}}\} < \kappa(\omega,x).
\end{align}
If $\omega_{m(\omega,x)+1} \in \Sigma_G$, then $T_{\omega}^{m(\omega,x)+1}(x) > \frac12$ by the definition of the intervals $J_g$. If $\omega_{m(\omega,x)+1} \in \Sigma_B$, then $T_{\omega}^{m(\omega,x)+1}(x) \in (\frac{1}{4},\frac{1}{2})$ by Lemma~\ref{lemma2.6d}(iii). We then see by Lemma~\ref{lemma2.6d}(i) that the number $m(\omega,x)$ is such that after this time any application of a good map will bring the orbit of $x$ in the interval $\big( \frac12,1\big)$. 
We have the following estimate on $l$.

\begin{lemma}\label{lemma3.2a}
Let $(\omega,x) \in Y$. Write $\mathbf d = \omega_{m(\omega,x)+1}\cdots \omega_{\kappa(\omega,x)-1} \in \Sigma_B^*$ and $g = \omega_{\kappa(\omega,x)} \in \Sigma_G$. Then
\[ l(\omega,x) \geq \ell_{ \mathbf d } r_g \frac{\log\big((1-2 \cdot T_{\omega}^{m(\omega,x)}(x) )^{-1}\big)}{\log 2}-2.\]
\end{lemma}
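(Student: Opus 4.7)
The plan is to decompose the orbit segment from time $m(\omega,x)$ up to time $\kappa(\omega,x)$ into a composition of bad maps followed by a single good map, apply the explicit formulas from Lemma~\ref{lemma2.1}, and then track how many further applications of the right branch $R(z) = 2z-1$ are needed to fall back into $(\frac{1}{2},\frac{3}{4})$.

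First I would justify that the word $\mathbf d = \omega_{m+1}\cdots \omega_{\kappa-1}$ lies in $\Sigma_B^*$ and that $g = \omega_\kappa \in \Sigma_G$. The latter is immediate since bad maps preserve $[0,\frac{1}{2})$ while $T_\omega^\kappa(x) > \frac{1}{2}$. For the former, observe that by definition of $m$ we have $T_\omega^{m}(x) \in J_{\omega_{m+1}}$, so if $\omega_{m+1} \in \Sigma_G$ then $T_\omega^{m+1}(x) > \frac{1}{2}$, forcing $\kappa = m+1$ and $\mathbf d = \epsilon$. Otherwise $\omega_{m+1} \in \Sigma_B$, and Lemma~\ref{lemma2.6d}(iii) gives $T_\omega^{m+1}(x) \in (\frac{1}{4},\frac{1}{2})$. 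A short computation shows $L_b(y) > y$ for $y \in (0,\frac{1}{2})$ and $b \in \Sigma_B$, so subsequent bad iterations keep the orbit in $(\frac{1}{4},\frac{1}{2})$. By Lemma~\ref{lemma2.6d}(i) we have $(\frac{1}{4},\frac{1}{2}) \subseteq J_g$ for every $g \in \Sigma_G$, so any good map appearing at some index $i$ with $m+1 \le i \le \kappa-1$ would force $T_\omega^i(x) > \frac{1}{2}$, contradicting $i < \kappa$.

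With this structure in hand, setting $y := T_\omega^{m(\omega,x)}(x)$, Lemma~\ref{lemma2.1}(i) gives $L_{\mathbf d}(y) = \frac{1}{2}\bigl(1 - (1-2y)^{\ell_{\mathbf d}}\bigr)$, so $\frac{1}{2} - L_{\mathbf d}(y) = \frac{1}{2}(1-2y)^{\ell_{\mathbf d}}$. Applying $L_g$ from \eqref{eq1u} then yields
\[ T_\omega^{\kappa(\omega,x)}(x) = L_g(L_{\mathbf d}(y)) = 1 - 2^{r_g}\Bigl(\tfrac{1}{2}(1-2y)^{\ell_{\mathbf d}}\Bigr)^{r_g} = 1 - (1-2y)^{\ell_{\mathbf d}\, r_g}. \]

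For the final step, note that for $0 \le k \le l(\omega,x)$ the point $T_\omega^{\kappa+k}(x)$ lies in $[\frac{1}{2},1]$, where every $T_j$ coincides with the right branch $R$. Hence $T_\omega^{\kappa+k}(x) = R^k(T_\omega^{\kappa}(x)) = 1 - 2^k(1 - T_\omega^{\kappa}(x))$, and this lies in $(\frac{1}{2},\frac{3}{4})$ iff $2^k(1 - T_\omega^{\kappa}(x)) \in (\frac{1}{4},\frac{1}{2})$. Since $l$ is the first such $k$, we have in particular $2^l(1 - T_\omega^{\kappa}(x)) \ge \frac{1}{4}$, i.e.\ $l \ge \log_2\!\bigl(1/(1 - T_\omega^{\kappa}(x))\bigr) - 2$. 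Substituting the formula from the previous paragraph gives the claimed inequality. The only delicate point is the combinatorial argument in the second paragraph; the rest is direct algebra.
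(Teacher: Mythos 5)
Your proof is correct and follows essentially the same route as the paper: compute $T_\omega^{\kappa}(x) = 1 - (1-2y)^{\ell_{\mathbf d} r_g}$ via Lemma~\ref{lemma2.1} and then observe that the orbit needs at least $\log_2\bigl((1-2y)^{-\ell_{\mathbf d} r_g}\bigr) - 2$ doublings under $R$ to re-enter $(\frac12,\frac34)$. Your second paragraph, checking that the block $\omega_{m+1}\cdots\omega_{\kappa-1}$ consists only of bad digits, is a correct justification of a fact that the paper establishes in the discussion preceding the lemma (and builds into the lemma's hypothesis) rather than inside the proof itself.
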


\begin{proof}
Set $y = T_{\omega}^{m(\omega,x)}(x)$. It follows from Lemma \ref{lemma2.1} that
\[ T_{\omega}^{\kappa(\omega,x)}(x) = L_g \circ L_{\mathbf d} (y) = 1-(1-2y)^{\ell_{\mathbf d} r_g}.\]
By definition of $R$, it follows that $l(\omega,x)$ is equal to the minimal $l \in \mathbb{N}_0$ such that
\begin{align}\label{q:2tol}
2^l \cdot (1-2y)^{\ell_{\mathbf d} r_g} > \frac{1}{4}.
\end{align}
Solving for $l$ gives the lemma.
\end{proof}

With this estimate we can now apply Kac's Lemma to prove Theorem \ref{result1a}.
\begin{proof}[Proof of Theorem \ref{result1a}]
Suppose $\mu$ is an acs probability measure for $F$. We first show that $\mu((\frac{1}{2},\frac{3}{4})) > 0$. Since $F^2(Y)$ equals $\Sigma^{\mathbb{N}} \times [0,1]$ up to some set of $\mathbb{P} \times \lambda$-measure zero and by \eqref{q:phikappal} all $(\omega,x) \in Y$ have a finite first return time to $Y$ under $F$, it follows that $\Sigma^{\mathbb{N}} \times [0,1]$ equals $\bigcup_{n=0}^{\infty} F^{-n} Y$ up to some set of $\mathbb{P} \times \lambda$-measure zero. Since $\mu$ is absolutely continuous with respect to $\lambda$, we obtain that
\[ 1 = \mathbb{P} \times \mu\big(\Sigma^{\mathbb{N}} \times [0,1] \big) \leq \sum_{n=0}^{\infty} \mathbb{P} \times \mu(F^{-n} Y) = \sum_{n=0}^{\infty} \mu\Big(\Big(\frac{1}{2},\frac{3}{4}\Big)\Big),\] from which we see that we indeed must have $\mu((\frac{1}{2},\frac{3}{4})) > 0$. Using the continuity of the measure, we know there exists an $a > \frac{1}{2}$ such that $\mu((a,\frac{3}{4})) > 0$. Note that for all $(\omega,x) \in \Sigma^{\mathbb{N}} \times (a,\frac{3}{4}) \cap Y$ we have
\begin{equation}\label{q:T14}
T_\omega^{m(\omega,x)} (x) \geq R(a).
\end{equation}
Fix a $g \in \Sigma_G$ and consider the subsets $A_{\mathbf b} = [\mathbf b g] \times ( a, \frac 34 ) \cap Y$, $\mathbf b \in \Sigma_B^*$, of $\Sigma^{\mathbb{N}} \times (a,\frac{3}{4}) \cap Y$. Set $p_B = \sum_{b \in \Sigma_B} p_b$. Then by \eqref{q:phikappal}, \eqref{q:T14} and Lemma \ref{lemma3.2a} we get
\begin{equation}\begin{split}\label{eq45a}
\int_Y \varphi \, d\mathbb{P} \times \mu \ge \ & \sum_{\mathbf b \in \Sigma_B^*} \int_{A_{ \mathbf b }} l(\omega,x) \, d\mathbb{P} \times \mu \\
\ge \ & \mu\Big(\Big(a,\frac{3}{4}\Big)\Big) \cdot \sum_{\mathbf b \in \Sigma_B^*} p_{\mathbf b g} \cdot \Big(\ell_{\mathbf b} r_g \cdot \frac{\log\big((1-2\cdot R(a))^{-1}\big)}{\log 2}-2\Big) \\
=\ & \mu\Big(\Big(a,\frac{3}{4}\Big)\Big) \cdot p_g \cdot \Big( r_g \cdot \frac{\log\big((1-2\cdot R(a))^{-1}\big)}{\log 2} \sum_{\mathbf b \in \Sigma_B^*} p_{\mathbf b}\ell_{\mathbf b} -2 \sum_{\mathbf b \in \Sigma_B^*} p_{\mathbf b} \Big) \\
=\ & M_1 \cdot \sum_{k \ge 0} \theta^k - M_2 \cdot \frac{1}{1-p_B}, 
\end{split}\end{equation}
with $M_1 = \mu((a,\frac{3}{4})) \cdot p_g r_g \cdot \frac{\log((1-2\cdot R(a))^{-1})}{\log 2}> 0$ and $M_2 = 2\mu((a,\frac{3}{4})) \cdot p_g$. It follows from the Ergodic Decomposition Theorem, see e.g.~Theorem 6.2 in \cite{einsiedler11}, that there exists a probability space $(E,\mathcal{E},\nu)$ and a measurable map $e \mapsto \mu_e$ with $\mu_e$ an $F$-invariant ergodic probability measure for $\nu$-a.e.~$e \in E$, such that
\[ \int_Y \varphi \, d\mathbb{P} \times \mu = \int_E \Big(\int_Y \varphi d\mu_e\Big) d\nu(e).\]
Combining this with \eqref{eq45a} we see that if $\theta \geq 1$, then there exists an $F$-invariant ergodic probability measure $\tilde{\mu}$ such that
\[ \int_Y \varphi d\tilde{\mu} = \infty, \]
in contradiction with Lemma \ref{l:kac}.
\end{proof}

\subsection{Estimates on the first return time $\varphi$}\label{subsec3.2}
From now on we only consider the case $\theta < 1$. We first define a first return time partition for $F$ to $Y$. For any $u \in \Sigma$, $g \in \Sigma_G$ and $\mathbf s, \mathbf w \in \Sigma^*$ write
\[ P_{u \mathbf s g \mathbf w} := ([u\mathbf s g \mathbf w] \cap \tilde \Omega ) \times (R^{|\mathbf w|} \circ L_g \circ L_{\mathbf s} \circ R)^{-1} \Big( \frac12, \frac34 \Big)\]
and define the collection of sets
\begin{equation}\label{q:frtp}
\mathcal P = \Big\{ P_{u \mathbf s g \mathbf w} \, : \, u \in \Sigma,\, g \in \Sigma_G, \, \mathbf s, \mathbf w \in \Sigma^* \Big\}.
\end{equation}
See Figure \ref{fig:interm2} for an illustration.

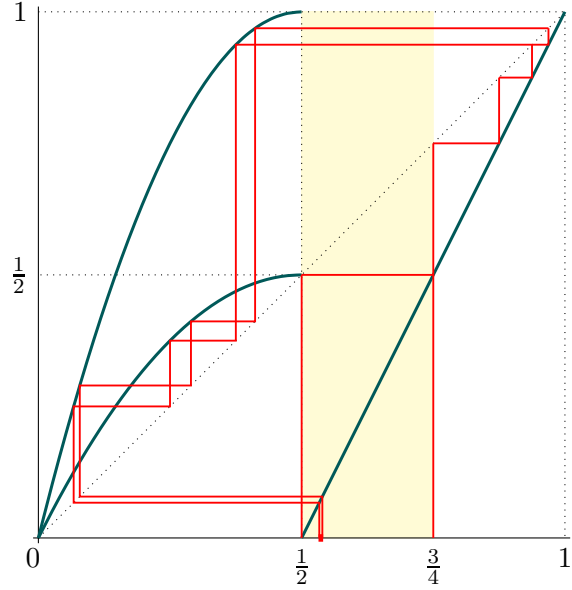
\begin{figure}[h] \label{fig2}
\centering
\begin{tikzpicture}[scale =7]
\filldraw[fill=yellow!20, draw=yellow!20] (.5,0) rectangle (.75,1);
\draw(-.01,0)--(1.01,0)(0,-.01)--(0,1.01);
\draw[dotted](.5,0)--(.5,1)(0,1)--(.5,1)(.5,.5)--(0,.5);
\draw[dotted](0.5,1)--(1,1)--(1,0);
\draw[dotted](0,0)--(1,1);
\draw[line width=.4mm, green!50!blue!70!black, smooth, samples =20, domain=0:0.5] plot(\x, { 4* \x * (1-\x)});
\draw[line width=.4mm, green!50!blue!70!black, smooth, samples =20, domain=0:0.5] plot(\x, { 2* \x * (1-\x)});
\draw[line width=.4mm, green!50!blue!70!black] (.5,0)--(1,1);
\draw[red, line width=.25mm](0.5,0)--(0.5,0.5);
\draw[red, line width=.25mm](0.75,0)--(0.75,0.5);
\newcommand{\zxxxx}{0.5}
\foreach \i in {1,...,4}{%
       \pgfmathparse{0.5*\zxxxx+0.5}
       \let\y\pgfmathresult
       \draw[red, line width=.25mm](\zxxxx,\zxxxx)--(\y,\zxxxx)--(\y,\y);
       \global\let\zxxxx\y
}
\draw[red, line width=.25mm](0.53350,0)--(0.53350,0.06699)--(0.06699,0.06699)--(0.06699,0.25)--(0.25,0.25)--(0.25,0.375)--(0.375,0.375)--(0.375,0.9375)--(0.9375,0.9375);
\draw[red, line width=.25mm](0.53931,0)--(0.53931,0.07862)--(0.07862,0.07862)--(0.07862,0.28977)--(0.28977,0.28977)--(0.28977,0.41161)--(0.41161,0.41161)--(0.41161,0.96875)--(0.96875,0.96875);

%
%

\node[below] at (-.01,0){\small 0};
\node[below] at (1,0){\small 1};
\node[below] at (.5,0){\small $\frac12$};
\node[left] at (0,1){\small 1};
\node[left] at (0,.5){\small $\frac12$};
\node[below] at (.75,0){\small $\frac34$};
\draw[red, line width=1mm](0.5320,0)--(0.5408,0);
\end{tikzpicture}
\caption{Example of a first return time partition element $P_{u \mathbf s g \mathbf w}$ with $|\mathbf s| =2$ and $|\mathbf w| = 3$ projected onto $[0,1]$. The yellow area indicates the inducing domain $Y$ projected onto $[0,1]$.}
\label{fig:interm2}
\end{figure}

\begin{prop}\label{p:frtp}
The collection $\mathcal P$ is a {\em first return time partition} of $F$ to $Y$ and for all $(\omega,x)$ in a set $P_{u\mathbf s g \mathbf w}$ it holds that $\kappa(\omega,x) = 2 + |\mathbf s|$ and $l(\omega,x) = |\mathbf w|$, so
\begin{align}\label{eqvarphi2}
\varphi(\omega,x) = 2 + |\mathbf s| + |\mathbf w|.
\end{align}
\end{prop}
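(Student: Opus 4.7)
My plan is to read off the prefix $u \mathbf s g \mathbf w$ of $\omega$ from the orbit of an arbitrary $(\omega, x) \in Y$ under $F$, using the splits provided by $\kappa(\omega,x)$ and $l(\omega,x)$, and then to verify uniqueness and covering. The orbit naturally decomposes into three phases: a single right-branch step, a run of left-branch iterations through $[0,\frac{1}{2})$ that terminates with a good map ejecting the orbit above $\frac{1}{2}$, and a run of right-branch iterations in $[\frac{3}{4},1]$ returning to $(\frac{1}{2},\frac{3}{4})$.

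First, for an arbitrary $(\omega, x) \in Y$, set $u = \omega_1$. Since $x \in (\frac{1}{2}, \frac{3}{4})$ the right branch is used, so $T_\omega^1(x) = R(x) \in (0, \frac{1}{2})$, whence $\kappa := \kappa(\omega,x) \geq 2$ is finite by \eqref{q:to12}. For $1 \leq n \leq \kappa-1$ one has $T_\omega^n(x) \in [0, \frac{1}{2})$, and because each bad map satisfies $L_b([0,\frac{1}{2})) \subseteq [0,\frac{1}{2})$, the index at time $\kappa$ must be good. Setting $g := \omega_\kappa \in \Sigma_G$ and $\mathbf s := \omega_2 \cdots \omega_{\kappa-1} \in \Sigma^*$ gives $\kappa = 2 + |\mathbf s|$ together with $T^\kappa_\omega(x) = L_g \circ L_\mathbf s \circ R(x) \in (\frac{1}{2}, 1]$, using the notation from \eqref{eq19d}.

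For the third phase, $l := l(\omega, x)$ is finite by \eqref{q:l}. For $0 \leq j \leq l-1$ the iterate $T^{\kappa+j}_\omega(x)$ lies in $[\frac{1}{2}, 1]$ but, by minimality of $l$, is not in $(\frac{1}{2}, \frac{3}{4})$, and by the definition of $Y$ is not equal to $\frac{1}{2}$; hence $T^{\kappa+j}_\omega(x) \in [\frac{3}{4}, 1]$. Therefore the right branch $R$ is applied at each of these iterates, irrespective of whether $\omega_{\kappa+j+1}$ is good or bad, which is exactly why the block $\mathbf w := \omega_{\kappa+1}\cdots \omega_{\kappa+l}$ can be an arbitrary word in $\Sigma^*$. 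With this, $l = |\mathbf w|$ and $T^{\kappa + l}_\omega(x) = R^{|\mathbf w|} \circ L_g \circ L_\mathbf s \circ R(x) \in (\frac{1}{2}, \frac{3}{4})$, so $(\omega, x) \in P_{u \mathbf s g \mathbf w}$ and $\varphi(\omega,x) = \kappa + l = 2 + |\mathbf s| + |\mathbf w|$.

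Finally, I will check uniqueness of the assignment: if $(\omega,x) \in P_{u_1 \mathbf s_1 g_1 \mathbf w_1} \cap P_{u_2 \mathbf s_2 g_2 \mathbf w_2}$, then reading off $\kappa$ and $l$ from the orbit forces $|\mathbf s_1| = |\mathbf s_2|$ and $|\mathbf w_1| = |\mathbf w_2|$, and matching symbols in the common prefix of $\omega$ yields equality of the two tuples. Combined with the covering argument above, this shows $\mathcal P$ is a partition of $Y$ (up to the $\mathbb P \times \lambda$-null set excluded by the definition of $Y$). One also checks $F^{\varphi}(\omega, x) \in Y$: the second coordinate lies in $(\frac{1}{2},\frac{3}{4})$ by construction, the first is in $\tilde\Omega$ by shift-invariance, and the condition $T^n_\omega(x) \neq \frac{1}{2}$ propagates from $(\omega,x) \in Y$. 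Measurability and countability of $\mathcal P$ are immediate. The main (minor) obstacle, as I see it, is cleanly arguing that the iterates at times $\kappa, \ldots, \kappa + l - 1$ avoid both $(\frac{1}{2},\frac{3}{4})$ and $\{\frac{1}{2}\}$ and hence lie in $[\frac{3}{4}, 1]$; this is what makes $R$ applicable throughout the third phase and what forces $\mathbf w$ to be unrestricted in $\Sigma^*$.
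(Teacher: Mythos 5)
Your proposal is correct and follows essentially the same approach as the paper: for each $(\omega,x)\in Y$ one reads off the prefix $u\mathbf s g\mathbf w$ from the orbit using $\kappa$ and $l$ (covering), and distinct tuples give distinct partition sets (disjointness). The only cosmetic difference is that you spell out the induction showing the intermediate iterates $T_\omega^{\kappa+j}(x)$, $0 \le j < l$, lie in $[\frac34,1]$, whereas the paper asserts this directly from the definition of $(R^{|\mathbf w|}\circ L_g\circ L_{\mathbf s}\circ R)^{-1}(\frac12,\frac34)$, and the paper's disjointness argument runs through the constancy of $\varphi$ on each $P$ and nesting of cylinders rather than through symbol matching; both routes amount to the same observation.
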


\begin{proof}
Let $(\omega,x) \in Y$. Since $\kappa(\omega,x), l(\omega,x) < \infty$ it is clear that we can find suitable $u \in \Sigma$, $g \in \Sigma_G$ and $\mathbf s, \mathbf w \in \Sigma^*$ so that $(\omega,x) \in P_{u\mathbf s g \mathbf w}$, so $\mathcal P$ covers $Y$. Now fix a set $P_{u\mathbf s g \mathbf w} \in \mathcal P$. By the definition of the set $(R^{|\mathbf w|} \circ L_g \circ L_{\mathbf s} \circ R)^{-1} ( \frac12, \frac34)$ one has for any $(\omega,x) \in P_{u\mathbf s g \mathbf w}$ that
\[ \begin{split}
T_\omega^n(x) & < \textstyle \frac12, \quad 1 \le n \le 1 + |\mathbf s|,\\
T_\omega^n(x) & >  \textstyle \frac34, \quad 2 + |\mathbf s| \le n \le 1 + |\mathbf s| +|\mathbf w|,\\
T_\omega^{2 + |\mathbf s| + |\mathbf w|} (x) & \in \textstyle (\frac12, \frac34).
\end{split}\]
Hence, $\kappa(\omega,x) = 2 + |\mathbf s|$ and $l(\omega,x) = |\mathbf w|$ for each $(\omega,x) \in P_{u \mathbf s g \mathbf w}$, and \eqref{eqvarphi2} follows from \eqref{q:phikappal}. From this we immediately obtain that the sets in $\mathcal P$ are disjoint. To see this, suppose there are two different sets $P_{u\mathbf s g \mathbf w}, P_{\tilde u \tilde{\mathbf s}\tilde g \tilde{\mathbf w}} \in \mathcal P$ with $P_{u\mathbf s g \mathbf w} \cap P_{\tilde u \tilde{\mathbf s}\tilde g \tilde{\mathbf w}} \neq \emptyset$ and let $(\omega,x) \in P_{u\mathbf s g \mathbf w}\cap P_{\tilde u \tilde{\mathbf s} \tilde g \tilde{\mathbf w}}$. Then without loss of generality we can assume that $[\tilde u \tilde{\mathbf s}\tilde g \tilde{\mathbf w}] \subseteq [u\mathbf s g \mathbf w]$, where the inclusion is strict. But this would give that
\[ \varphi(\omega,x) = 2 + |\mathbf s|+ |\mathbf w| < 2 + |\tilde{\mathbf s}|+ |\tilde{\mathbf w}| = \varphi(\omega,x),\]
a contradiction.
\end{proof}

For the estimates we give below, we split the word $\mathbf s$ into two parts $\mathbf s = \mathbf v \mathbf b$, where $\mathbf b$ specifies the string of bad digits that immediately precedes $\omega_{\kappa(\omega,x)}$. In other words, if we write
\[ A_G = \{ \mathbf v \in \Sigma^* \, : \,  v_{|\mathbf v| \in \Sigma_G}\}\]
for the set of words that end with a good digit, then for any $\mathbf s \in \Sigma^*$ there are unique $\mathbf v \in A_G$ and $\mathbf b \in \Sigma_B^*$ such that $\mathbf s = \mathbf v \mathbf b$. Recall $\gamma_1$ and $\gamma_2$ from \eqref{eqgamma1} and \eqref{eqgamma2}, respectively. In the remainder of this subsection we prove the following result.

\begin{prop}\label{prop3.3b}
Suppose $\theta < 1$. Then the following statements hold.
\begin{itemize}
\item[(i)] $\int_Y \varphi \, d\mathbb{P} \times \lambda < \infty$.
\item[(ii)] $\mathbb{P} \times \lambda(\varphi > n) = O(n^{\gamma-1})$ for any $\gamma \in (\gamma_1,0)$.
\item[(iii)] $\mathbb{P} \times \lambda(\varphi > n) = \Omega(n^{\gamma_2-1})$.
\end{itemize}
\end{prop}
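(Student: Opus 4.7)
The plan rests on the decomposition already used in the text: for $(\omega,x)\in Y$, write $\omega_2\cdots\omega_{\kappa-1}=\mathbf v\mathbf b$ with $\mathbf v\in A_G\cup\{\epsilon\}$ and $\mathbf b\in\Sigma_B^*$, set $g:=\omega_\kappa\in\Sigma_G$ and $z:=T_\omega^{1+|\mathbf v|}(x)$. Combining Lemma~\ref{lemma2.1} with the definition \eqref{eq1u} of a good map yields
\[
T_\omega^\kappa(x)=L_g\circ L_{\mathbf b}(z)=1-(1-2z)^{r_g\ell_{\mathbf b}},
\]
so that, retracing the calculation in the proof of Lemma~\ref{lemma3.2a},
\[
l(\omega,x)>n\iff(1-2z)^{r_g\ell_{\mathbf b}}<2^{-n-2}\iff z>\tfrac12\bigl(1-2^{-(n+2)/(r_g\ell_{\mathbf b})}\bigr).
\]
This is the quantitative translation used throughout.

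For part (iii), I will choose $b^*\in\Sigma_B$ achieving the maximum in \eqref{eqgamma2}, set $k_n:=\lceil\log n/\log\ell_{b^*}\rceil$, and restrict to the event $E_n$ that $\omega_2,\ldots,\omega_{k_n+1}$ all lie in $\{j\in\Sigma_B:\ell_j\geq\ell_{b^*}\}$ and $\omega_{k_n+2}\in\Sigma_G$. On $E_n$ one has $\mathbf v=\epsilon$, $\ell_{\mathbf b}\geq\ell_{b^*}^{k_n}\geq n$ and $z=R(x)=2x-1$, so the condition $l>n$ reduces to $x>\tfrac34-\tfrac14\cdot 2^{-(n+2)/(r_g\ell_{\mathbf b})}$, which cuts out a subinterval of $(\tfrac12,\tfrac34)$ of Lebesgue measure at least $\tfrac{1}{32}$ (since $(n+2)/(r_g\ell_{\mathbf b})\leq 3$). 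The $\mathbb P$-probability of $E_n$ is $\pi_{b^*}^{k_n}\sum_{g\in\Sigma_G}p_g\geq c\cdot n^{\log\pi_{b^*}/\log\ell_{b^*}}=c\cdot n^{\gamma_2-1}$, so $\mathbb P\times\lambda(\varphi>n)\geq\mathbb P\times\lambda(l>n)=\Omega(n^{\gamma_2-1})$.

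For part (ii), split $\mathbb P\times\lambda(\varphi>n)\leq\mathbb P\times\lambda(l>n/2)+\mathbb P\times\lambda(\kappa>n/2)$. The $\kappa$-tail decays geometrically: each intermediate good digit in $\mathbf v$ forces the orbit into $[0,L_g^{-1}(\tfrac12)]\subseteq[0,\tfrac14]$ by Lemma~\ref{lemma2.6d}(i), giving uniform contraction. For the $l$-tail, use the partition $\mathcal P$ and the Koebe principle applied to $L_{\mathbf v}\circ R$ on suitable extension intervals, where the good digits in $\mathbf v$ supply the required room via Lemma~\ref{lemma2.6d}. After summing over $\mathbf v$ and $u$ this yields
\[
\mathbb P\times\lambda(l>n)\leq C\sum_{\mathbf b\in\Sigma_B^*,\,g\in\Sigma_G}p_{\mathbf b}p_g\cdot 2^{-(n+2)/(r_g\ell_{\mathbf b})}.
\]
To bound the right-hand side I will combine Markov's inequality
\[
\sum_{\mathbf b\,:\,\ell_{\mathbf b}>M}p_{\mathbf b}\leq M^{-\alpha}\sum_{\mathbf b\in\Sigma_B^*}p_{\mathbf b}\ell_{\mathbf b}^{\alpha}=\frac{M^{-\alpha}}{1-\theta_\alpha},\qquad\theta_\alpha:=\sum_{b\in\Sigma_B}p_b\ell_b^\alpha,
\]
valid whenever $\theta_\alpha<1$, with an integration-by-parts argument that converts it into $\sum_{\mathbf b}p_{\mathbf b}\cdot 2^{-n/\ell_{\mathbf b}}\leq C_\alpha n^{-\alpha}$. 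A short convexity check using $\ell_b^{-\gamma_1}\leq 1/\theta$ gives $\theta_{1-\gamma_1}\leq(1/\theta)\sum_b p_b\ell_b=1$, hence $\theta_\alpha<1$ for every $\alpha\in(1,1-\gamma_1)$. This makes every $\gamma\in(\gamma_1,0)$ admissible and proves (ii); part (i) is then immediate since $\gamma-1<-1$ is summable.

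The hard part will be the uniform Koebe distortion estimate for $L_{\mathbf v}\circ R$ as $\mathbf v$ ranges over $A_G\cup\{\epsilon\}$. Since the bad left-branches have derivative vanishing at $\tfrac12$ while exceeding $1$ on the interior intervals identified in Lemma~\ref{lemma2.6d}(iii), the Koebe extensions must be constructed inductively so that each good digit in $\mathbf v$ contributes enough geometric expansion to make $\sum_{\mathbf v\in A_G\cup\{\epsilon\}}p_{\mathbf v}\cdot(\text{distortion factor})$ converge. Checking the inequality $\theta_\alpha<1$ for $\alpha<1-\gamma_1$ when different bad digits have different $\ell_b$ is also a delicate convexity step, since only $\ell_{\max}$ appears in the definition of $\gamma_1$ while $\theta_\alpha$ weights every element of $\Sigma_B$.
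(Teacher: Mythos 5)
Your part~(iii) argument is essentially the paper's (Lemma~\ref{lemma3.6c}), just done more directly: you pick the optimising $b^*$, force the prefix $\omega_2\cdots\omega_{k_n+1}$ into $\{j:\ell_j\ge\ell_{b^*}\}$, and read off the interval of $x$ giving $l>n$; your computation of the interval length $\tfrac14\cdot 2^{-(n+2)/(r_g\ell_{\mathbf b})}\ge\tfrac1{32}$ when $\ell_{\mathbf b}\ge n$ is correct and the lower bound $\mathbb P(E_n)\ge \pi_{b^*}p_G\, n^{\gamma_2-1}$ follows. Fine.

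For part~(ii), the moment/Markov route for the $\mathbf b$-sum is a genuinely different and in fact cleaner path than the paper's three-range split in Lemma~\ref{lemma3.5c}: the convexity check $\ell_b^{-\gamma_1}\le\theta^{-1}$ (equality only at $\ell_b=\ell_{\max}$) giving $\theta_{1-\gamma_1}\le 1$ hence $\theta_\alpha<1$ for $\alpha\in(1,1-\gamma_1)$ is correct, and the integration-by-parts step converting $\mathbb P(\ell_{\mathbf b}>M)\lesssim M^{-\alpha}$ into $\sum_{\mathbf b}p_{\mathbf b}2^{-n/\ell_{\mathbf b}}=O(n^{-\alpha})$ does work (you need $\int_0^\infty u^\alpha 2^{-u}\,du<\infty$, which is fine). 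This recovers exactly the range $\gamma=1-\alpha\in(\gamma_1,0)$.

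The genuine gap is in the $\mathbf v$-sum, which you flag as ``the hard part'' but whose fix you misidentify. The Koebe Principle~\eqref{eq2.5} is a \emph{distortion} estimate: it controls the ratio $DT(x)/DT(y)$, not the size of $DT$; by itself it cannot manufacture the exponentially small factor in $|\mathbf v|$ that is needed to make $\sum_{\mathbf v\in A_G\cup\{\epsilon\}}p_{\mathbf v}\cdot(\text{bound})$ converge --- note $\sum_{\mathbf v\in A_G,\,|\mathbf v|=j}p_{\mathbf v}=p_G$ for every $j\ge1$, so without a geometric gain the series is divergent. What actually produces the decay is a genuine lower bound on $DL_{\mathbf v}$, and this comes from the explicit geometry of the maps: Lemma~\ref{lemma2.6d}(i) shows that each good digit forces the orbit into $[0,L_g^{-1}(\tfrac12)]\subseteq[0,\tfrac14]$, on which every $L_j$, $j\in\Sigma$, has derivative bounded below by the constant $s^{-1}$ of~\eqref{eqp6a}; chaining this along $\mathbf v$ gives $DL_{\mathbf v}\ge s^{-|\mathbf v|}$ on the relevant preimage, which is precisely how the factor $s^{j}$ in Lemma~\ref{lemma3.4c} arises. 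This is a direct chain-rule estimate, with no Koebe and no inductive construction of extension intervals. You should replace the Koebe clause of your outline with this explicit derivative bound; with that done, your $\kappa$-tail/$l$-tail split and the Markov argument close the proof of~(ii), and~(i) follows by summability of $n^{\gamma-1}$ as you say.

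Two minor points worth noting. First, after the telescoping over $\mathbf w$ with $|\mathbf w|>n/2$ you get an exponent of order $n/(2r_g\ell_{\mathbf b})$, not $(n+2)/(r_g\ell_{\mathbf b})$; this changes only an absolute constant in the rate, so is harmless. Second, the assertion ``on $E_n$ one has $\mathbf v=\epsilon$'' is only true once you also restrict $x$ to the interval forcing $l>n$; for such $x$ one indeed has $T_\omega^{k_n+2}(x)>\tfrac12$ so $\kappa=k_n+2$ and the whole prefix $\omega_2\cdots\omega_{k_n+1}$ is absorbed into $\mathbf b$, but you should make that dependence explicit.
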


For the proof of Proposition \ref{prop3.3b} we will first prove three lemma's. Write
\begin{align}\label{eqp6a}
s = \Big(\min\Big\{ \Big\{DL_g\Big(L_g^{-1} \Big(\frac{1}{2}\Big) \Big) \, :\, g \in \Sigma_G\Big\} \cup \Big\{DL_b\Big(L_b^{-1} \Big( \frac{1}{4}\Big) \Big)\, :\,  b \in \Sigma_B\Big\}\Big)^{-1}.
\end{align}
The number $\frac1s$ will serve below as a lower bound on the derivative of the maps $T_j$ in some situation. Using Lemma \ref{lemma2.6d}, we see that $s \in (0,1)$.

\begin{lemma}\label{lemma3.4c}
For each $n \in \mathbb{N}$ we have
\[ \begin{split}
 \mathbb{P} \times \lambda( \varphi > n ) \leq \ & \frac14 \cdot \sum_{j=0}^{\infty} s^j \sum_{k=0}^{\infty} \sum_{\mathbf b \in \Sigma_B^k} \frac{p_{\mathbf b}}{2^{\max(n-1-j-k,1)\ell_{\mathbf b}^{-1} r_{\max}^{-1}}}, \\
 \mathbb{P} \times \lambda( \varphi > n ) \geq \ & \frac14 \cdot \min\{p_g\, :\,  g \in \Sigma_G\} \cdot \sum_{k=0}^{\infty} \sum_{\mathbf b \in \Sigma_B^k} \frac{p_{\mathbf b}}{2^{\max(n-1-k,1)\ell_{\mathbf b}^{-1} r_{\min}^{-1}}}.
\end{split}\]
\end{lemma}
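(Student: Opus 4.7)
The plan is to decompose $\mathbb P\times\lambda(\varphi>n)$ along the first return partition $\mathcal P$ of Proposition~\ref{p:frtp}, write each $\mathbf{s}\in\Sigma^*$ as $\mathbf{s}=\mathbf{v}\mathbf{b}$ with $\mathbf{b}\in\Sigma_B^*$ the maximal bad suffix and $\mathbf{v}\in A_G\cup\{\epsilon\}$, and then compute the Lebesgue lengths of the partition elements explicitly, using Lemma~\ref{lemma2.1}(ii) together with the analogous identity $L_g(z)=1-(1-2z)^{r_g}$. Setting $j=|\mathbf{v}|$, $k=|\mathbf{b}|$, $m=|\mathbf{w}|$ and using $\sum_u p_u=1$ and $\sum_{\mathbf{w}\in\Sigma^m}p_\mathbf{w}=1$ to eliminate the $u$ and $\mathbf{w}$ variables, the problem reduces, by the identity $\varphi=2+|\mathbf{s}|+|\mathbf{w}|$, to bounding
\[
\sum_{m\ge m_0}\lambda\bigl((R^{m}\circ L_g\circ L_\mathbf{b}\circ L_\mathbf{v}\circ R)^{-1}(\tfrac12,\tfrac34)\bigr),\qquad m_0=\max(0,n-1-j-k),
\]
with a bound that is uniform in $\mathbf{v},\mathbf{b},g$ but with the correct dependence on $j,k,\ell_\mathbf{b},r_g$.

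For the geometric sum over $m$ I would use that $\bigcup_{m\ge m_0}R^{-m}(\tfrac12,\tfrac34)=(1-2^{-m_0-1},1)$, which collapses the sum to a single preimage computation. Applying the inverse branches in turn, $L_g^{-1}$ produces the interval $(\tfrac12-\tfrac12\cdot 2^{-(m_0+1)/r_g},\tfrac12)$, and then Lemma~\ref{lemma2.1}(ii) gives the preimage under $L_\mathbf{b}$ as $(\tfrac12-\tfrac12\cdot 2^{-(m_0+1)/(r_g\ell_\mathbf{b})},\tfrac12)$, of length $\tfrac12\cdot 2^{-(m_0+1)/(r_g\ell_\mathbf{b})}$. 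I would then show by induction on $|\mathbf{v}|$ that $L_\mathbf{v}^{-1}$ shrinks this interval by a factor of at most $s^{|\mathbf{v}|}$: the image of the first inverse $L_{v_j}^{-1}$ sits inside $(0,L_{v_j}^{-1}(\tfrac12))\subseteq(0,\tfrac14)$ by Lemma~\ref{lemma2.6d}(i), and every subsequent $L_{v_i}^{-1}$ keeps the interval inside $(0,\tfrac14)$ by Lemma~\ref{lemma2.6d}(ii)(iii); on these subintervals the decreasing nature of $DL_{v_i}$ on $(0,\tfrac12)$ combined with the defining property of $s$ in \eqref{eqp6a} yields $DL_{v_i}\ge 1/s$, so each backward step multiplies length by at most $s$. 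A final $R^{-1}$ contributes a factor $\tfrac12$, leading to
\[
\sum_{m\ge m_0}\lambda\bigl((R^{m}\circ L_g\circ L_\mathbf{b}\circ L_\mathbf{v}\circ R)^{-1}(\tfrac12,\tfrac34)\bigr)\le \tfrac14\,s^{|\mathbf{v}|}\cdot 2^{-(m_0+1)/(r_g\ell_\mathbf{b})},
\]
with equality when $\mathbf{v}=\epsilon$, since then $L_\mathbf{v}^{-1}$ is the identity and no shrinking occurs.

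To conclude the upper bound I would sum over all $\mathbf{v},\mathbf{b},g$, use $\sum_g p_g\le 1$ and $\sum_{\mathbf{v}\in\Sigma^j}p_\mathbf{v}\le 1$, and replace $r_g$ by $r_{\max}$ in the exponent (making $2^{-(m_0+1)/(r_g\ell_\mathbf{b})}$ larger, since it is increasing in $r_g$). For the lower bound I would restrict the sum to $\mathbf{v}=\epsilon$, retain only the contribution of one fixed good map $g_0$, which provides probability at least $\min_g p_g$, and use $r_{g_0}\ge r_{\min}$ to bound the exponent in the opposite direction. The main obstacle is the inductive $s$-shrinkage step: at each backward application one must check that the new interval still sits inside the right subdomain of $(0,\tfrac12)$ on which the derivative bound $DL\ge 1/s$ is available, and this is exactly where Lemma~\ref{lemma2.6d} and the precise definition of $s$ in \eqref{eqp6a} play their role.
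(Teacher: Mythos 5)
Your proposal is correct and essentially follows the paper's own route: the same decomposition over the first-return partition $\mathcal P$, the same splitting $\mathbf s=\mathbf v\mathbf b$ into a maximal bad suffix, the same use of Lemma~\ref{lemma2.1} to compute preimage lengths explicitly, and the same factor-$s^{|\mathbf v|}$ shrinkage argument for the good-terminated prefix $\mathbf v$ via the definition \eqref{eqp6a} of $s$ and Lemma~\ref{lemma2.6d}. The only cosmetic difference is that you evaluate the sum over $|\mathbf w|\ge m_0$ directly as the length of $(L_g\circ L_{\mathbf b}\circ L_{\mathbf v}\circ R)^{-1}$ applied to the single interval $\bigcup_{m\ge m_0}R^{-m}(\tfrac12,\tfrac34)=(1-2^{-m_0-1},1)$, while the paper writes the same quantity as a telescoping series in $|\mathbf w|$; both give $\tfrac14\,s^{|\mathbf v|}\,2^{-(m_0+1)\ell_{\mathbf b}^{-1}r_g^{-1}}$ and lead to the stated bounds after handling the $g$, $\mathbf v$ and $\mathbf b$ sums exactly as you describe.
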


\begin{proof}
For $P = P_{u \mathbf v \mathbf b g \mathbf w} \in \mathcal P$ we know from Proposition~\ref{p:frtp} that the first return time is constant on $P$ and equal to $\varphi_P = 2+ |\mathbf v| + |\mathbf b| + |\mathbf w|$. Let $n \in \mathbb{N}$. Then
\[ \mathbb{P} \times \lambda (\varphi > n) = \sum_{P: \varphi_P > n} \mathbb{P} \times \lambda (P).\]
To obtain the desired lower bound on $\mathbb{P} \times \lambda( \varphi > n )$ we only consider those $P=P_{u \mathbf v \mathbf b g \mathbf w} \in \mathcal P$ where $\mathbf v = \epsilon$ is the empty word. From Lemma \ref{lemma2.1} we get
\begin{align}\label{eq60c}
( R^{|\mathbf w|}\circ L_g \circ L_{\mathbf b})^{-1} \Big(\frac{1}{2},\frac{3}{4}\Big) = \Big(\frac{1}{2}\Big(1-\frac{1}{2^{(|\mathbf w|+1)\ell_{\mathbf b}^{-1} r_g^{-1}}}\Big),\frac{1}{2}\Big(1-\frac{1}{2^{(|\mathbf w|+2)\ell_{\mathbf b}^{-1} r_g^{-1}}}\Big)\Big).
\end{align}
Since $R$ has derivative 2, we then have
\[
\lambda\Big((R^{|\mathbf w|} \circ L_g \circ L_{\mathbf b} \circ R)^{-1} \Big(\frac{1}{2},\frac{3}{4}\Big)\Big) =  \frac{1}{4} \Big( \Big(\frac{1}{2^{|\mathbf w|+1}}\Big)^{\ell_{\mathbf b}^{-1 }r_g^{-1}} -  \Big(\frac{1}{2^{|\mathbf w|+2}}\Big)^{\ell_{\mathbf b}^{-1}r_g^{-1}}\Big)
\]
and thus
\begin{align}
\mathbb{P} \times \lambda (\varphi > n)  \ge \ & \sum_{u \in \Sigma} \sum_{g \in \Sigma_G} \sum_{\mathbf b \in \Sigma_B^*} \sum_{\stackrel{\mathbf w \in \Sigma^*:}{|\mathbf w| \ge \max \{ 0, n-2-|\mathbf b|\}} }  
\frac{\mathbb P ([u\mathbf b g \mathbf w])}{4}\Big( \frac{1}{2^{(|\mathbf w|+1)\ell_{\mathbf b}^{-1 }r_g^{-1}}} -  \frac{1}{2^{(|\mathbf w|+2)\ell_{\mathbf b}^{-1}r_g^{-1}}}\Big) \nonumber\\
= \ &  \sum_{g \in \Sigma_G}\frac{p_g }{4}  \sum_{k=0}^{\infty}  \sum_{\mathbf b \in \Sigma_B^k}  p_{\mathbf b} \sum_{l=\max \{ 0, n-2-k\}}^{\infty} \Big( \frac{1}{2^{(l+1)\ell_{\mathbf b}^{-1}r_g^{-1}}} -  \frac{1}{2^{(l+2)\ell_{\mathbf b}^{-1}r_g^{-1}}}\Big) \label{eq80v} \\
 \ge \ & \frac14 \cdot \min\{p_g\, :\,  g \in \Sigma_G\}   \cdot \sum_{k=0}^{\infty} \sum_{\mathbf b \in \Sigma_B^k} \frac{p_{\mathbf b}}{2^{\max \{1, n-1-k\}\ell_{\mathbf b}^{-1} r_{\min}^{-1}}}. \nonumber
\end{align}

\vskip .2cm
For the upper bound, we look for the smallest derivative to determine the length of $(R^{|\mathbf w|} \circ L_g \circ L_{\mathbf b} \circ L_{\mathbf v} \circ R)^{-1} (\frac{1}{2},\frac{3}{4})$. If $\mathbf v = \epsilon$, then from above we see
\[ \lambda\Big( (R^{|\mathbf w|} \circ L_g \circ L_{\mathbf b} \circ R)^{-1} \Big(\frac{1}{2},\frac{3}{4}\Big)\Big) =  \frac{s^{|\mathbf v|}}{4} \Big( \Big(\frac{1}{2^{|\mathbf w|+1}}\Big)^{\ell_{\mathbf b}^{-1 }r_g^{-1}} -  \Big(\frac{1}{2^{|\mathbf w|+2}}\Big)^{\ell_{\mathbf b}^{-1}r_g^{-1}}\Big).\]
On the other hand, if $\mathbf v=v_1 \cdots v_j$ with $j \ge 1$, then $v_j \in \Sigma_G$. We have
\[ ( R^{|\mathbf w|}\circ L_g \circ L_{\mathbf b})^{-1} \Big(\frac{1}{2},\frac{3}{4}\Big) \subseteq \Big( 0, \frac12\Big).\]
As follows from before $s$ from \eqref{eqp6a} represents the smallest possible shrinkage factor when applying $L_{v_j}^{-1}$. If $j \ge 2$, then by Lemma~\ref{lemma2.6d}(i) we have $L_{v_{j-1}}^{-1}(L_{v_j}^{-1} (\frac{1}{2})) \leq L_{v_{j-1}}^{-1}(\frac{1}{4})$. Hence, $s^{-|\mathbf v|}$ is a lower bound for the derivative of $L_{\mathbf v}$ for any $\mathbf v \in A_G$ on $( R^{|\mathbf w|}\circ L_g \circ L_{\mathbf b})^{-1} (\frac{1}{2},\frac{3}{4})$. It then follows from \eqref{eq60c} that
\[ \lambda\Big( (R^{|\mathbf w|} \circ L_g \circ L_{\mathbf b}\circ  L_{\mathbf v} \circ R)^{-1} \Big(\frac{1}{2},\frac{3}{4}\Big)\Big) \le  \frac{s^{|\mathbf v|}}{4} \Big( \Big(\frac{1}{2^{|\mathbf w|+1}}\Big)^{\ell_{\mathbf b}^{-1 }r_g^{-1}} -  \Big(\frac{1}{2^{|\mathbf w|+2}}\Big)^{\ell_{\mathbf b}^{-1}r_g^{-1}}\Big).\]
Writing $f(n,j,k) = \max(n-2-j-k,0)$, we thus obtain that
\begin{align}
\mathbb{P} \times \lambda (\varphi > n) & \le \sum_{g \in \Sigma_G}\sum_{\mathbf v \in A_G}  \sum_{\mathbf b \in \Sigma_B^*}  \sum_{\stackrel{\mathbf w \in \Sigma^*:}{|\mathbf w| \ge f(n, |\mathbf v|, |\mathbf b|)}} \frac{p_{\mathbf v \mathbf b g \mathbf w} s^{|\mathbf v|}}{4} \Big( \frac{1}{2^{(|\mathbf w|+1)\ell_{\mathbf b}^{-1 }r_g^{-1}}} -  \frac{1}{2^{(|\mathbf w|+2)\ell_{\mathbf b}^{-1 }r_g^{-1}}} \Big) \nonumber \\
& \le \frac14 \sum_{j=0}^{\infty} s^j \sum_{k=0}^{\infty}  \sum_{g \in \Sigma_G} \sum_{\mathbf b \in \Sigma_B^k} p_{\mathbf b} \sum_{l=f(n,j,k)}^{\infty} \Big( \frac{1}{2^{(l+1)\ell_{\mathbf b}^{-1}r_g^{-1}}} -  \frac{1}{2^{(l+2)\ell_{\mathbf b}^{-1}r_g^{-1}}}\Big) \label{eq77t} \\
& \le \frac14 \sum_{j=0}^{\infty} s^j \sum_{k=0}^{\infty} \sum_{\mathbf b \in \Sigma_B^k} \frac{p_{\mathbf b}}{2^{(f(n,j,k)+1)\ell_{\mathbf b}^{-1} r_{\max}^{-1}}}. \nonumber
\end{align}
This gives the result.
\end{proof}

The next lemma gives estimates for the last part of the expression on the right hand side of the first inequality from Lemma~\ref{lemma3.4c} for an initial range of values of $j$. The number of values $j$ for which we obtain an upper bound for the double sum grows logarithmically with $n$.

\begin{lemma}\label{lemma3.5c}
Let $\gamma \in (\gamma_1,0)$, and for each $n \in \mathbb{N}$ define $j(n) = \floor{\frac{(\gamma-1) \log n}{\log s}}$. Then there exist $C_1 > 0$ and $n_1 \in \mathbb{N}$ such that for all integers $n \geq n_1$ and $j =0,1,\ldots,j(n)$ we have
\begin{align}\label{q:sumk}
\sum_{k=0}^{\infty} \sum_{\mathbf b \in \Sigma_B^k}  \frac{p_{\mathbf b}}{2^{\max(n-1-j-k,1)\ell_{\mathbf b}^{-1} r_{\max}^{-1}}} \leq C_1 \cdot n^{\gamma-1}.
\end{align}
\end{lemma}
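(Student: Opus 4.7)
The plan is to split the inner double sum at $k = n-2-j$ so that the max resolves, then (i) estimate the tail $k > n-2-j$ crudely, and (ii) estimate the main sum $k \le n-2-j$ by replacing the exponential $1/2^y$ with a power $y^{-\beta}$ for a well-chosen $\beta$.

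The tail contribution is bounded by $\sum_{k > n-2-j} \sum_{\mathbf b \in \Sigma_B^k} p_{\mathbf b} = p_B^{\,n-1-j}/(1-p_B)$, where $p_B = \sum_{b \in \Sigma_B} p_b < 1$. Since $j \le j(n) = O(\log n)$, this decays exponentially in $n$ and is therefore $o(n^{\gamma-1})$, uniformly in $j \in \{0,\dots,j(n)\}$.

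For the main part I choose $\beta := 1-\gamma > 1$ and use the elementary bound $2^{-y} \le C_\beta y^{-\beta}$ for every $y > 0$, with $C_\beta = (\beta/(e \log 2))^{\beta}$ (this is just the maximum of $y^{\beta} e^{-y\log 2}$). Writing $\theta_\beta := \sum_{b \in \Sigma_B} p_b \ell_b^{\beta}$ and exploiting multiplicativity $\sum_{\mathbf b \in \Sigma_B^k} p_{\mathbf b} \ell_{\mathbf b}^{\beta} = \theta_\beta^k$, the main part is at most
\[
C_\beta \, r_{\max}^{\beta} \sum_{k=0}^{n-2-j} \frac{\theta_\beta^k}{(n-1-j-k)^{\beta}}.
\]
The critical input is that $\theta_\beta < 1$. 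By the definition $\gamma_1 = \log\theta/\log\ell_{\max}$ one has $\ell_{\max}^{-\gamma_1} = 1/\theta$, and since $\gamma_1 < 0$ this gives $\ell_b^{-\gamma_1} \le \ell_{\max}^{-\gamma_1} = 1/\theta$ for every $b \in \Sigma_B$. Hence
\[
\theta_{1-\gamma_1} = \sum_{b \in \Sigma_B} p_b \ell_b \cdot \ell_b^{-\gamma_1} \le \tfrac{1}{\theta}\sum_{b} p_b \ell_b = 1,
\]
and by strict monotonicity of $\beta \mapsto \theta_\beta$ on $(0,\infty)$ combined with $\beta = 1-\gamma < 1-\gamma_1$ we deduce $\theta_\beta < 1$.

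To finish, the convolution $\sum_{k=0}^{n-2-j} \theta_\beta^k/(n-1-j-k)^{\beta}$ is split at $k = \lfloor (n-1-j)/2 \rfloor$: on the low-$k$ piece $n-1-j-k \ge (n-1-j)/2$, and summing the geometric series yields a bound $2^{\beta}(1-\theta_\beta)^{-1}(n-1-j)^{-\beta}$; on the high-$k$ piece the prefactor $\theta_\beta^{(n-1-j)/2}$ is exponentially small and the remaining tail $\sum_{k'\ge 1}(k')^{-\beta}$ converges because $\beta > 1$. Since $j \le j(n) = O(\log n)$, for $n$ large enough one has $n-1-j \ge n/2$, whence $(n-1-j)^{-\beta} \le 2^{\beta} n^{\gamma-1}$, producing a constant $C_1$ independent of $j$. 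The main subtlety is pinning down $\gamma_1$ as the correct threshold via the key inequality $\theta_{1-\gamma_1} \le 1$; once this is in place, the rest is a standard combination of geometric-sum and tail estimates.
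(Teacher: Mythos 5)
Your proof is correct, and it takes a genuinely different route from the paper's. The paper makes a three-piece split of the $k$-sum at $k_1(n) \approx \frac{\gamma\log n}{\log\theta}$ and $k_2(n) \approx \frac{n}{2}$; on the middle piece it uses the $\beta = 1$ estimate $\frac{y}{d^y} \le \frac{1}{\log d}$ to obtain a term of order $\theta^k (n-1-j-k)^{-1}$, and the factor $n^{\gamma}$ needed to reach the target rate $n^{\gamma-1}$ is extracted from the geometric tail $\theta^{k_1(n)}$; the first piece decays superpolynomially because there $\ell_{\mathbf b}^{-1}$ is not yet too small. You instead apply the power-law comparison $2^{-y} \le C_\beta y^{-\beta}$ directly at $\beta = 1-\gamma$, exploit multiplicativity to produce $\theta_\beta^k$ with $\theta_\beta := \sum_b p_b \ell_b^\beta$, and the decay $n^{\gamma-1} = n^{-\beta}$ comes straight out of the geometric series $\sum_k \theta_\beta^k$. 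The load-bearing new ingredient is the observation that $\ell_b^{-\gamma_1} \le \ell_{\max}^{-\gamma_1} = 1/\theta$ gives $\theta_{1-\gamma_1} \le 1$, and then strict monotonicity of $\beta \mapsto \theta_\beta$ (valid since $\ell_b > 1$) and $1-\gamma < 1-\gamma_1$ force $\theta_\beta < 1$; this is exactly the dual of the paper's calibration of $k_1(n)$ using $\gamma_1 = \log\theta/\log\ell_{\max}$, but packaged as a single self-contained inequality. Your convolution estimate (split at $k = \lfloor (n-1-j)/2\rfloor$, with $\beta > 1$ making the $\sum (k')^{-\beta}$ tail converge) and the handling of the exponentially small remainders are standard and correct; you correctly note that $j \le j(n) = O(\log n)$ makes $n-1-j \ge n/2$ for large $n$, which is what makes the bound uniform in $j$. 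In sum: same kernel of an idea — the threshold $\gamma_1$ controls how far one can push the power-law comparison — realized by a cleaner decomposition that avoids the paper's intermediate cutoff $k_1(n)$.
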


\begin{proof}
We will split the sum over $k$ in \eqref{q:sumk} into three pieces: $0 \le k \le k_1(n)$, $k_1(n) +1 \le k \le k_2(n)$ and $k > k_2(n)$. To define $k_1(n)$, let $a = \frac{\gamma \log \ell_{\max}}{\log \theta}\in (0,1)$. Then for each $n \in \mathbb{N}$ set $k_1(n)  = \big\lfloor \frac{\log(n^a)}{\log \ell_{\max}} \big\rfloor = \floor{\frac{\gamma \log n}{\log \theta}}$. The values $a$ and $k_1(n)$ are such that $\theta^{k_1(n)+1} \leq n^{\gamma}$ and $\ell_{\max}^{k_1(n)} \leq n^a$. Since $j(n)+ k_1(n) = O(\log n)$, we can find an $N_0 \in \mathbb{N}$ and a constant $K_1 > 0$ such that for all integers $n \geq N_0$ we have $n-1-j(n)-k_1(n) > 1$ and $(n-1-j(n)-k_1(n)) \cdot r_{\max}^{-1} \geq K_1 \cdot n$. Then, for all $n \geq N_0$, $0 \le j \le j(n)$ and $0 \le k \le k_1(n)$,
\[ \max(n-1-j-k,1) \ge \max (n-1-j(n)-k_1(n),1) = n-1-j(n)-k_1(n)\]
and $\ell_{\mathbf b} \le \ell_{\max}^{k_1(n)} \le n^a$ for all $\mathbf b \in \Sigma_B^k$. Setting $p_B = \sum_{b \in \Sigma_B} p_b$ as before, this together gives
\begin{equation}\begin{split}
\sum_{k=0}^{k_1(n)} \sum_{\mathbf b \in \Sigma_B^k} \frac{ p_{\mathbf b}}{2^{\max(n-1-j-k,1)\ell_{\mathbf b}^{-1} r_{\max}^{-1}}}  & \leq \sum_{k=0}^{\infty} \frac{p_B^k}{2^{(n-1-j(n)-k_1(n))n^{-a} r_{\max}^{-1}}} \\
& \leq \frac{2^{-K_1 \cdot n^{1-a}}}{1-p_B}. \label{eq68c}
\end{split}\end{equation}
Secondly, for each $n \in \mathbb{N}$ set $k_2(n) = \ceil{\frac{1}{2}(n-1-j(n))}$ and take an integer $N_1 \geq N_0$ and constant $K_2 > 0$ such that for all integers $n \geq N_1$ we have $k_2(n) \geq k_1(n)+1$ and $\frac{1}{2}(n-1-j(n))-1 \geq K_2 \cdot n$. Noting that for each $d > 1$ the function $f$ on $\mathbb{R}$ given by $f(x) = \frac{x}{d^x}$ has maximal value $\frac{1}{e \log d} < \frac1{\log d}$, we obtain for all integers $n \geq N_1$,
\begin{equation}\begin{split}
\sum_{k=k_1(n)+1}^{k_2(n)} \sum_{\mathbf b \in \Sigma_B^k}  \frac{p_{\mathbf b}}{2^{\max(n-1-j-k,1)\ell_{\mathbf b}^{-1} r_{\max}^{-1}}}  & = \sum_{k=k_1(n)+1}^{k_2(n)} \sum_{\mathbf b \in \Sigma_B^k} p_{\mathbf b} \ell_{\mathbf b} \frac{\ell_{\mathbf b}^{-1} }{(2^{(n-1-j-k)r_{\max}^{-1}})^{\ell_{\mathbf b}^{-1}}} \\
&\leq \sum_{k=k_1(n)+1}^{k_2(n)}\theta^k \frac{r_{\max}}{(n-1-j-k)\log 2} \\
&\leq \frac{\theta^{k_1(n)+1}}{1-\theta} \cdot \frac{r_{\max}}{(n-1-j(n)-k_2(n))\log 2} \\
&\leq \frac{r_{\max}}{(1-\theta)K_2 \log 2} \cdot n^{\gamma-1}. \label{eq68d}
\end{split}\end{equation}
Finally, for each $n \geq N_1$ we have
\begin{align}
\sum_{k=k_2(n)+1}^{\infty} \sum_{\mathbf b \in \Sigma_B^k} \frac{p_{\mathbf b} }{2^{\max(n-1-j-k,1)\ell_{\mathbf b}^{-1} r_{\max}^{-1}}} \leq \sum_{k=k_2(n)+1}^{\infty} p_B^k = \frac{p_B^{k_2(n)+1}}{1-p_B} \leq \frac{p_B^{K_2 \cdot n}}{1-p_B}. \label{eq69b}
\end{align}
Combining \eqref{eq68c}, \eqref{eq68d} and \eqref{eq69b} yields 
\[\begin{split}
\sum_{k=0}^{\infty} \sum_{\mathbf b \in \Sigma_B^k}  \frac{p_{\mathbf b}}{2^{\max(n-1-j-k,1)\ell_{\mathbf b}^{-1} r_{\max}^{-1}}} \le & \frac{2^{-K_1 \cdot n^{1-a}}+ p_B^{K_2 \cdot n}}{1-p_B} + \frac{r_{\max}}{(1-\theta)K_2 \log 2} n^{\gamma-1} .
\end{split}\]
Since the first term on the right hand side decreases superpolynomially fast in $n$, this yields the existence of a constant $C_1 > 0$ and integer $n_1 \geq N_1$ for which the statement of the lemma holds.
\end{proof}

\begin{lemma}\label{lemma3.6c}
There exist $C_2 > 0$ and $n_2 \in \mathbb{N}$ such that for each integer $n \geq N_2$ we have
\begin{align*}
\sum_{k=0}^{\infty} \sum_{\mathbf b \in \Sigma_B^k}  \frac{p_{\mathbf b}}{2^{\max(n-1-k,1)\ell_{\mathbf b}^{-1} r_{\min}^{-1}}} \geq C_2 \cdot n^{\gamma_2-1}.
\end{align*}
\end{lemma}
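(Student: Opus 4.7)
The plan is to lower-bound the double sum by keeping a single well-chosen value $k = k(n)$ in the outer sum and restricting the inner sum to those words $\mathbf b$ whose digits all have the largest $\ell$-values compatible with the maximizer in \eqref{eqgamma2}. This suffices because one can already extract $n^{\gamma_2 - 1}$ from this one-term lower bound once $k(n)$ is tuned so that $\ell_{\mathbf b}^{-1} \cdot (n-1-k(n))$ is bounded.

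To set this up, first I would fix some $b^* \in \Sigma_B$ attaining the maximum in \eqref{eqgamma2}, so that $\gamma_2 - 1 = \frac{\log \pi_{b^*}}{\log \ell_{b^*}}$, and let $\Sigma_{B,*} = \{j \in \Sigma_B : \ell_j \geq \ell_{b^*}\}$, for which $\sum_{j \in \Sigma_{B,*}} p_j = \pi_{b^*}$ by the definition in \eqref{eqpib}. Then I would set $k(n) = \lceil \log n / \log \ell_{b^*} \rceil$. For any $\mathbf b \in \Sigma_{B,*}^{k(n)}$ we have $\ell_{\mathbf b} = \prod_i \ell_{b_i} \geq \ell_{b^*}^{k(n)} \geq n$, so
\[ \max(n-1-k(n),1) \cdot \ell_{\mathbf b}^{-1} \cdot r_{\min}^{-1} \leq r_{\min}^{-1} \leq 1, \]
provided $n$ is large enough that $k(n) \leq n-2$. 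Hence the denominator $2^{(\cdot)}$ is bounded above by $2$ uniformly on these terms.

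The final step is to keep only the single summand $k = k(n)$ and sum the numerators $p_{\mathbf b}$ over $\mathbf b \in \Sigma_{B,*}^{k(n)}$, which by independence of the digits equals $\pi_{b^*}^{k(n)}$. From the choice of $k(n)$, and using $\log \pi_{b^*} < 0$, $\log \ell_{b^*} > 0$,
\[ \pi_{b^*}^{k(n)} \geq \pi_{b^*} \cdot \pi_{b^*}^{\log n / \log \ell_{b^*}} = \pi_{b^*} \cdot n^{\gamma_2 - 1}. \]
Combining the two estimates yields the bound with $C_2 = \pi_{b^*}/2$ and $n_2$ any integer such that $\lceil \log n_2 / \log \ell_{b^*} \rceil \leq n_2 - 2$.

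I do not expect any significant obstacle; the argument is a one-term lower bound tuned to the maximizer in \eqref{eqgamma2}. The one point requiring care is that $\gamma_2$ is defined in terms of $\pi_b$ (a sum over all bad digits of order at least $\ell_b$) rather than $p_b$ alone, which is precisely why one must sum over the full family $\Sigma_{B,*}$ rather than merely over powers of a single bad symbol $b^*$; this is what pins the exponent at $\gamma_2 - 1$ instead of the weaker $\max_b \tfrac{\log p_b}{\log \ell_b} - 1$.
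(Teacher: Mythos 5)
Your proof is correct and follows essentially the same strategy as the paper: fix a maximizer $b^*$ of \eqref{eqgamma2}, restrict to words whose digits all come from $\Sigma_{B,*}$, choose $k(n)=\lceil \log n/\log\ell_{b^*}\rceil$, and extract $\pi_{b^*}^{k(n)}\gtrsim n^{\gamma_2-1}$ from that single $k$-term while bounding the exponential factor by a constant. The paper keeps the exponent bound as an auxiliary constant $K_3$ rather than observing directly that it is at most $r_{\min}^{-1}\le 1$, but this is a cosmetic difference only.
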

\begin{proof}
Let $b \in \Sigma_B$ be such that $\gamma_2 = 1 + \frac{\log \pi_b}{\log \ell_b}$ with $\pi_b$ as in \eqref{eqpib}. For each $k \in \mathbb{N}$ let
\begin{align*}
A_k = \{\mathbf b = b_1 \cdots b_k \in \Sigma_B^k : \ell_{b_j} \geq \ell_b \text{ for each $j = 1,\ldots,k$ }\}.
\end{align*}
Then $\sum_{\mathbf b \in A_k} p_{\mathbf b} = \pi_b^k$ and for each $\mathbf b \in A_k$ we have $\ell_{\mathbf b} \geq \ell_b^k$. This gives
\begin{equation}\begin{split}
\sum_{k=0}^{\infty} \sum_{\mathbf b \in \Sigma_B^k}  \frac{p_{\mathbf b}}{2^{\max(n-1-k,1)\ell_{\mathbf b}^{-1} r_{\min}^{-1}}} & \geq \sum_{k=0}^{\infty} \sum_{\mathbf b \in A_k} \frac{p_{\mathbf b}}{2^{\max(n-1-k,1)\ell_{\mathbf b}^{-1} r_{\min}^{-1}}}\\
& \geq \sum_{k=0}^{\infty}  \frac{\pi_b^k}{2^{\max(n-1-k,1)\ell_b^{-k} r_{\min}^{-1}}}. \label{eq75b}
\end{split}\end{equation}
For each $n \in \mathbb{N}$ we define $k_3(n) = \ceil{\frac{(\gamma_2-1) \log n}{\log \pi_b}} = \ceil{\frac{\log n}{\log \ell_b}}$. Then $\pi_b^{k_3(n)-1} \geq n^{\gamma_2-1}$ and $\ell_b^{-k_3(n)} \leq n^{-1}$. We take $N_2 \in \mathbb{N}$ and $K_3 > 0$ such that for each integer $n \geq N_2$ we have $n-1-k_3(n) \geq 1$ and $(n-1-k_3(n)) \cdot n^{-1} \cdot r_{\min}^{-1} \leq K_3$. Then we get
\begin{equation}\begin{split}
\sum_{k=0}^{\infty} \frac{\pi_b^k}{2^{\max(n-1-k,1)\ell_b^{-k} r_{\min}^{-1}}} & \geq \pi_b^{k_3(n)} \frac{1}{2^{(n-1-k_3(n))\ell_b^{-k_3(n)} r_{\min}^{-1}}} \\
& \geq \pi_b \cdot n^{\gamma_2-1} \frac{1}{2^{(n-1-k_3(n)) \cdot n^{-1} \cdot r_{\min}^{-1}}} \\
& \geq \pi_b \cdot 2^{-K_3} \cdot n^{\gamma_2-1} \label{76b}
\end{split}\end{equation}
for each $n \ge N_2$. Combining \eqref{eq75b} and \eqref{76b} now yields the result with $C_2 = \pi_b \cdot 2^{-K_3}$.
\end{proof}

\begin{proof}[Proof of Proposition \ref{prop3.3b}]
First of all, note that
\begin{align*}
\int_Y \varphi \, d\mathbb{P} \times \lambda = \sum_{n=2}^{\infty} n \cdot \mathbb{P} \times \lambda(\varphi = n) \leq 2 \cdot \sum_{n=1}^{\infty} \mathbb{P} \times \lambda(\varphi > n).
\end{align*}
Since for each $\gamma < 0$ we have $\sum_{n=1}^{\infty} n^{\gamma-1} < \infty$, (i) follows from (ii). For (ii), let $\gamma \in (\gamma_1,0)$. It follows from Lemma \ref{lemma3.4c} and Lemma \ref{lemma3.5c} that for each integer $n \geq n_1$ we have
\[ \begin{split}
\mathbb{P} \times \lambda( \varphi > n ) \le \ & \frac{C_1}{4} \cdot n^{\gamma-1} \cdot \sum_{j=0}^{j(n)} s^j + \frac14 \sum_{j=j(n)+1}^{\infty} s^j \sum_{k=0}^{\infty} p_B^k \\
\leq \ & \frac{C_1}{4(1-s)} \cdot n^{\gamma-1} + \frac{1}{4(1-s)(1-p_B)} \cdot s^{j(n)+1}.
\end{split}\]
By the definition of $j(n)$, we have $s^{j(n)+1} \leq n^{\gamma-1}$, which gives (ii).

Finally, it follows from Lemma \ref{lemma3.4c} and Lemma \ref{lemma3.6c} that for each integer $n \geq n_2$ we have
\[ \mathbb{P} \times \lambda( \varphi > n ) \geq \frac{\min \{ p_g \, : \, g \in \Sigma_G \} \cdot C_2}{4}  \cdot n^{\gamma_2-1}.\qedhere \]
\end{proof}

\subsection{Estimates on the induced map}\label{subsec3.3}

Recall that $F^{\varphi}(\omega,x) = (\sigma^{\varphi(\omega,x)}\omega,T_{\omega}^{\varphi(\omega,x)}(x))$. The second part of the next lemma shows in particular that $F^{\varphi}$ projected on the second coordinate is expanding.

\begin{lemma}\label{lemma3.4a}
Let $(\omega,x) \in Y$.
\begin{itemize}
\item[(i)] For each $j=1,\ldots,\varphi(\omega,x)-1$ we have $DT_{\sigma^j \omega}^{\varphi(\omega,x)-j}(T_{\omega}^j(x)) \geq \frac{1}{2}$.
\item[(ii)] $DT_{\omega}^{\varphi(\omega,x)}(x) \geq 2$.
\end{itemize}
\end{lemma}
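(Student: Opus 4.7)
The plan is to prove both parts by a direct application of the chain rule, exploiting the explicit dynamics on each first return partition element. Fix $(\omega,x)\in Y$ and let $P_{u\mathbf s g\mathbf w}\in\mathcal P$ be the element containing it, with $\mathbf s=\mathbf v\mathbf b$ as in Subsection~\ref{subsec3.2}. Along the orbit $T_\omega^n(x)$, $n=0,\dots,\varphi$, the map $T_{\omega_1}$ acts as $R$, each $T_{\omega_n}$ for $2\le n\le\kappa$ acts as its left branch $L_{\omega_n}$ (its input $T_\omega^{n-1}(x)$ lies in $[0,\tfrac12)$), and each $T_{\omega_n}$ for $\kappa<n\le\varphi$ acts as $R$ (its input lies in $(\tfrac34,1)$). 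In particular, for part~(i) with $\kappa\le j\le\varphi-1$ the composition $T_{\sigma^j\omega}^{\varphi-j}$ equals $R^{\varphi-j}$, giving derivative $2^{\varphi-j}\ge 2$.

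For $1\le j\le\kappa-1$ I let $\mathbf s':=s_j\cdots s_{|\mathbf s|}$ (empty when $j=\kappa-1$) and decompose $\mathbf s'=\mathbf v'\mathbf b'$ with $\mathbf v'$ either empty or ending in a good digit and $\mathbf b'\in\Sigma_B^*$. A short computation that chains Lemma~\ref{lemma2.1}(i) with the good-map formula $L_h(y)=1-(1-2y)^{r_h}$ yields, for any $\mathbf u\in\Sigma_B^*$ and $h\in\Sigma_G$,
\[
L_{\mathbf u h}(z)=1-(1-2z)^{\ell_{\mathbf u}r_h},\qquad DL_{\mathbf u h}(z)=2\ell_{\mathbf u}r_h(1-2z)^{\ell_{\mathbf u}r_h-1}.
\]
Decomposing $\mathbf v'$ into its $m'$ maximal bad-block-ended-by-good pieces $\mathbf u^{(i)}g^{(i)}$, every intermediate image of $L_{\mathbf v'}$ lies in $[0,\tfrac12)$, which forces $(1-2z)^{\ell_{\mathbf u^{(i)}}r_{g^{(i)}}}>\tfrac12$ at each block's input $z$. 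The elementary bound $t\cdot 2^{1/t}\ge 2$ for $t\ge 1$ then gives $DL_{\mathbf v'}(T_\omega^j(x))\ge 2^{m'}$.

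For the concluding block I combine $T_\omega^\varphi(x)\in(\tfrac12,\tfrac34)$ with the identity $1-T_\omega^\varphi(x)=2^{|\mathbf w|}(1-T_\omega^\kappa(x))$ to obtain $1-T_\omega^\kappa(x)\ge 2^{-|\mathbf w|-2}$. Setting $z:=L_{\mathbf v'}(T_\omega^j(x))$ and $t:=\ell_{\mathbf b'}r_g\ge 1$, the displayed identity gives $(1-2z)^t=1-T_\omega^\kappa(x)$, and after simplification $2^{|\mathbf w|}\cdot DL_{\mathbf b'g}(z)\ge (t/2)\cdot 2^{(|\mathbf w|+2)/t}$. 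The inequality $t\cdot 2^{(|\mathbf w|+2)/t}\ge 2$ for $t\ge 1$ and $|\mathbf w|\ge 0$ (split at $t=2$: on $[1,2]$ one has $2^{(|\mathbf w|+2)/t}\ge 2^{(|\mathbf w|+2)/2}\ge 2$; for $t>2$ the product already exceeds $2$) then combines with $DL_{\mathbf v'}\ge 2^{m'}$ to give $DT_{\sigma^j\omega}^{\varphi-j}(T_\omega^j(x))\ge 2^{m'}\ge 1$, proving~(i). Part~(ii) follows from the $j=1$ estimate together with the initial factor $DR(x)=2$: $DT_\omega^\varphi(x)\ge 2\cdot 2^{m}=2^{m+1}\ge 2$, where $m$ denotes the number of good digits in $\mathbf v$.

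The main technical hurdle I foresee is the clean derivation of the identity for $L_{\mathbf u h}$ and the careful block decomposition of $\mathbf v'$, which together absorb the potentially small derivatives of individual bad-map steps. Once those are in place, the remaining estimates reduce to the two elementary inequalities $t\cdot 2^{1/t}\ge 2$ and $t\cdot 2^{(|\mathbf w|+2)/t}\ge 2$ for $t\ge 1$.
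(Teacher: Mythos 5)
Your proof is correct, and it takes a genuinely different route from the paper's. Both arguments rest on the chain rule and the explicit formula $L_{\mathbf u h}(z)=1-(1-2z)^{\ell_{\mathbf u}r_h}$ derived from Lemma~\ref{lemma2.1}, but the decompositions differ. The paper introduces the auxiliary time $m(\omega,x)$ from \eqref{eq32a} (the first moment the orbit enters the critical intervals $J_{\omega_{n+1}}$), observes that the derivatives of $L_{u_j\cdots u_{m-1}}$ are $\ge 1$ on the non-critical region via the points $x_g,x_b$ of Lemma~\ref{lemma2.6d}, and then factors the remaining bad block $\mathbf d=\mathbf q\mathbf t$ to obtain the explicit estimate $\frac12\ell_{\mathbf t}r_g(1-2L_{\mathbf u}\circ R(x))^{-\ell_{\mathbf q}}\ge\frac12$. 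You instead partition $\omega_{j+1}\cdots\omega_{\kappa-1}$ into maximal bad-block-ended-by-good chunks and use that each chunk's output lies below $\frac12$ to force $(1-2z)^{t_i}>\frac12$ and hence a per-chunk derivative $\ge t_i\cdot 2^{1/t_i}\ge 2$; the trailing bad block plus the escaping good map plus $R^{|\mathbf w|}$ are then controlled by the landing position $T_\omega^\varphi(x)\in(\frac12,\frac34)$. Your route avoids invoking $m$, $J_j$ and $x_b$ altogether, and the chunk-by-chunk argument yields the slightly sharper bound $\ge 2^{m'}\ge 1$ for part~(i) rather than the paper's $\ge\frac12$; the price is that you need the small extra observation $t\cdot 2^{(|\mathbf w|+2)/t}\ge 2$ for the concluding chunk, which you supply. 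The paper's version, by contrast, produces an intermediate estimate (display \eqref{eq88b}) that is reused directly in the proof of part~(ii) with the refined lower bounds on $1-2L_{\mathbf u}\circ R(x)$ coming from membership in $J_g$ or $J_{d_1}$, so it integrates more tightly with the rest of Section~\ref{sec3}; your argument deduces part~(ii) more quickly from the $j=1$ case and the initial factor $DR(x)=2$.

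Two minor remarks, neither a real gap. First, the derivative $2^{\varphi-j}$ for $j\ge\kappa$ requires knowing that $T_\omega^n(x)\in(\tfrac34,1)$ for $\kappa\le n<\varphi$; this is indeed forced by the definition of $l(\omega,x)$ (intermediate iterates cannot fall into $(\tfrac12,\tfrac34)$ and, being $>\tfrac12$, must exceed $\tfrac34$), and you state it without proof, which is acceptable but worth being aware of. Second, the required inequality for the concluding chunk is really $\tfrac{t}{2}2^{(|\mathbf w|+2)/t}\ge 1$, i.e.\ $t\cdot 2^{(|\mathbf w|+2)/t}\ge 2$, and your split at $t=2$ proves it cleanly; it is not $\ge 2$, so the bound $2^{m'}$ for the whole product is sharp when $m'=0$, and only the extra factor $DR(x)=2$ rescues part~(ii) in that case, exactly as you argue.
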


\begin{proof}
Let $(\omega,x) \in Y$. Recall the definitions of $\kappa = \kappa(\omega,x)$ from \eqref{q:to12}, $l = l(\omega,x)$ from \eqref{q:l} and $m= m(\omega,x)$ from \eqref{eq32a}. Write\footnote{We use different letters here than for the partition elements $P_{u \mathbf v \mathbf b g \mathbf w}$ from $\mathcal P$, since the subdivision here is different (and $(\omega,x)$-dependent).}
\[ \begin{split}
\mathbf u =\, & \omega_2 \cdots \omega_m \in \Sigma^*,\\
\mathbf d =\, & \omega_{m+1} \cdots \omega_{\kappa-1} \in \Sigma_B^*,\\
g =\, & \omega_{\kappa} \in \Sigma_G.
\end{split}\]
Then 
\[ T_{\omega}^{\varphi(\omega,x)}(x) = R^l \circ L_g \circ L_{\mathbf d} \circ L_{\mathbf u} \circ R(x).\]
We have $DL_b(y) \geq 1$ for all $y \in [0,x_b)$ and all $b \in \Sigma_B$. For $v \in \Sigma_G$ with $r_v > 1$ we obtained in Lemma \ref{lemma2.6d}(ii) that $x_v > L_v^{-1} (\frac{1}{2})$ and hence $DL_v(y) \geq 1$ for all $v \in \Sigma_G$ and $y \in [0,L_v^{-1} (\frac{1}{2}))$. It follows from the definition of $m$ that, for each $j \in \{1,\ldots,m-1\}$,
\begin{align}\label{eq82b}
DL_{u_j \cdots u_{m-1}}(L_{u_1 \cdots u_{j-1}}\circ R(x)) = \prod_{i=j}^{m-1} DL_{u_i}(L_{u_1 \cdots u_{i-1}} \circ R(x)) \geq 1.
\end{align}
Let $\mathbf q,\mathbf t \in \Sigma_B^*$ be any two words such that $\mathbf d = \mathbf q \mathbf t$.
Using Lemma \ref{lemma2.1} we find that for each $y \in [0,\frac{1}{2})$,
\begin{align}\label{eq83b}
D(L_g \circ L_{\mathbf t})(y) = 2\ell_{\mathbf t} r_g (1-2y)^{\ell_{\mathbf t}r_g-1}.
\end{align}
Furthermore, from \eqref{q:2tol} we see that
\begin{align}\label{eq84b}
2^l \geq \frac14 (1-2L_{\mathbf u}\circ R(x))^{-\ell_{\mathbf d}r_g}
\end{align}
and applying Lemma~\ref{lemma2.1} to $L_\mathbf q$ gives
\begin{equation}\label{q:ells}
(1-2L_{\mathbf q} \circ L_{\mathbf u} \circ R(x))^{\ell_{\mathbf t}r_g-1} = (1-2L_{\mathbf u}\circ R(x))^{\ell_{\mathbf d}r_g - \ell_{\mathbf q}}.
\end{equation}
Combining \eqref{eq83b}, \eqref{eq84b} and \eqref{q:ells} yields
\begin{equation}\begin{split}
D(R^l \circ L_g \circ L_{\mathbf t}) & (L_{\mathbf q} \circ L_{\mathbf u} \circ R(x)) = 2^l \cdot 2 \ell_{\mathbf t} r_g (1-2L_{\mathbf q} \circ  L_{\mathbf u}\circ R(x))^{\ell_{\mathbf t} r_g-1} \\
& \geq \frac{1}{4} (1-2L_{\mathbf u}\circ R(x))^{-\ell_{\mathbf d}r_g} \cdot 2\ell_{\mathbf t} r_g (1-2L_{\mathbf u}\circ R(x))^{\ell_{\mathbf d}r_g-\ell_{\mathbf q}} \label{eq88b} \\
& = \frac{1}{2} \ell_{\mathbf t} r_g (1-2L_{\mathbf u} \circ R(x))^{-\ell_{\mathbf q}},
\end{split}\end{equation}
which we can lower bound by $\frac{1}{2}$. To prove (i), for any $j \in \{1,\ldots, m-1\}$ taking $\mathbf q = \epsilon$ (which means $\ell_{\mathbf q}=1$) and $\mathbf t = \mathbf d$ we obtain using \eqref{eq82b} that 
\begin{align*}
DT_{\sigma^j \omega}^{\varphi(\omega,x)-j}(T_{\omega}^jx) = D(R^l\circ L_g\circ L_{\mathbf d})(L_{\mathbf u} \circ R(x)) \cdot DL_{u_j \cdots u_{m-1}}(L_{u_1 \cdots u_{j-1}}\circ R(x)) \geq \frac{1}{2}.
\end{align*}
For $j \in \{m,\ldots,\kappa-1\}$ we take $\mathbf q = \omega_{m+1} \cdots \omega_j$ and $\mathbf t = \omega_{j+1}\cdots \omega_{\kappa-1}$ (with $\mathbf q = \epsilon$ in case $j=m$ and $\mathbf t = \epsilon$ in case $j=\kappa-1$) and get
\[ DT_{\sigma^j \omega}^{\varphi(\omega,x)-j}(T_{\omega}^j(x)) = D(R^l\circ L_g \circ L_{\mathbf t})(L_{\mathbf q}\circ L_{\mathbf u} \circ R(x)) \geq \frac{1}{2}\]
by \eqref{eq88b}. Finally, if $j \in \{\kappa,\ldots, \varphi(\omega,x)-1\}$, then
\[ DT_{\sigma^j \omega}^{\varphi(\omega,x)-j}(T_{\omega}^j(x))  = 2^{ \kappa+l-j} \geq \frac{1}{2}.\]
This proves (i). For (ii), we write
\[ DT_{\omega}^{\varphi(\omega,x)} (x) = D(R^l\circ L_g \circ L_{\mathbf d})(L_{\mathbf u} \circ R(x)) \cdot DL_{\mathbf u}(R(x)) \cdot DR(x).\]
We have $DR(x)=2$ and by \eqref{eq82b} with $j=1$ we get $DL_{\mathbf u}(R(x)) \ge 1$. What is left is to estimate the first factor. From \eqref{eq88b} with $\mathbf q = \epsilon$ and $\mathbf t = \mathbf d$ we see that
\[ D(R^l\circ L_g \circ L_{\mathbf d})(L_{\mathbf u} \circ R(x)) \ge \frac12 \ell_{\mathbf d} r_g (1-2 L_{\mathbf u} \circ R(x))^{-1}.\]
Note that by the definition of $m$ we have $L_{\mathbf u} \circ R(x) \in (L_g^{-1}(\frac{1}{2}),\frac{1}{2})$ if $m =\kappa-1$, so if $\mathbf d = \epsilon$, and $L_{\mathbf u} \circ R(x) \in [x_{d_1},\frac{1}{2})$ if $m <  \kappa- 1$. In case $m = \kappa-1$ we obtain that
\[ \ell_{\mathbf d} r_g (1-2L_{\mathbf u} \circ R(x))^{-1} \geq r_g \Big(1-2L_g^{-1}\Big(\frac{1}{2}\Big)\Big)^{-1} = r_g \cdot 2^{1/r_g} \geq 2,\]
where we used the expression for $L_g^{-1}(\frac12)$ from \eqref{eq33f} and the fact that $x \cdot 2^{1/x} \geq 2$ for all $x \geq 1$. In case $m < \kappa-1$, we have
\[ \ell_{\mathbf d} r_g (1-2L_{\mathbf u} \circ R(x))^{-1} \geq \ell_{d_1} (1-2x_{d_1})^{-1} =  \ell_{d_1}^{1+(\ell_{d_1}-1)^{-1}} \geq 2, \]
where we used \eqref{eq36f} and the fact that $x^{1+(x-1)^{-1}} > 2$ for all $x > 1$. Hence, in all cases
\[ DT_{\omega}^{\varphi(\omega,x)} (x) \geq \frac{1}{2} \cdot 2 \cdot 1 \cdot 2 = 2. \qedhere\]
\end{proof}

Recall the first return time partition $\mathcal P$ from \eqref{q:frtp}. For $P = P_{u \mathbf v \mathbf b g \mathbf w} \in \mathcal P$ set
\[ \pi_2 (P) := (R^{|\mathbf w|} \circ L_g \circ L_{\mathbf b} \circ L_{\mathbf v} \circ R)^{-1} \Big( \frac12, \frac34 \Big)\]
and write $S_P$ for the restriction of the map $T_\omega^{\varphi(\omega,x)}$ to $\pi_2(P)$, so
\[ S_P := T_\omega^{\varphi(\omega,x)}|_{\pi_2(P)} = R^{|\mathbf w|} \circ L_g \circ L_{\mathbf b} \circ L_{\mathbf v} \circ R|_{\pi_2(P)}.\]
We give two lemma's on the maps $S_P$, that will be useful when verifying (t6) for the Young tower in the next section.

\begin{lemma}\label{lemma3.8}
There exists a constant $C_3 > 0$ such that for each $P \in \mathcal P$ and all $(\omega,x), (\omega',y) \in P$ we have
\begin{align*}
\Big|\frac{J_{\mathbb{P} \times \lambda} F^{\varphi}(\omega,x)}{J_{\mathbb{P} \times \lambda} F^{\varphi}(\omega',y)}-1\Big| \leq C_3 \cdot \big| S_P(x) - S_P(y) \big|.
\end{align*}
\end{lemma}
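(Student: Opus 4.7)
The plan is to reduce the ratio of Jacobians to a ratio of one-dimensional derivatives and then to derive a distortion estimate for the map $S_P$ via a suitable monotone extension and the Koebe Principle.

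I first compute the Jacobian. On $P = P_{u\mathbf{v}\mathbf{b}g\mathbf{w}}$, Proposition~\ref{p:frtp} gives $F^\varphi|_P : (\omega,x) \mapsto (\sigma^{\varphi_P}\omega, S_P(x))$ with $\varphi_P$ constant. The shift $\sigma^{\varphi_P}$ bijects $[u\mathbf{v}\mathbf{b}g\mathbf{w}]$ onto $\Sigma^{\mathbb N}$ (modulo a $\mathbb P$-null set) with $\mathbb P$-Jacobian $p_{u\mathbf{v}\mathbf{b}g\mathbf{w}}^{-1}$, while $S_P:\pi_2(P) \to (\tfrac12,\tfrac34)$ has $\lambda$-Jacobian $|DS_P|$. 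The product structure then yields
\[ J_{\mathbb P \times \lambda} F^{\varphi}(\omega,x) = \frac{|DS_P(x)|}{p_{u\mathbf{v}\mathbf{b}g\mathbf{w}}} \qquad \text{for } (\omega,x) \in P, \]
so the probability prefactor cancels in the ratio and the lemma reduces to the distortion estimate
\[ \Big|\frac{DS_P(x)}{DS_P(y)}-1\Big| \leq C_3 \cdot |S_P(x)-S_P(y)| \qquad \text{for all } x, y \in J := \pi_2(P). \]

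I next extend $S_P = R^{|\mathbf{w}|} \circ L_g \circ L_\mathbf{b} \circ L_\mathbf{v} \circ R$ monotonically to an open interval $\tilde J \supseteq J$ with image $(0,1)$, so that the Koebe hypothesis can be verified with a single constant. The extension is built by pulling $(0,1)$ back through the factors: under $R^{|\mathbf{w}|}$ the preimage is $(1-2^{-|\mathbf{w}|},1)$; under $L_g$ this becomes $(L_g^{-1}(1-2^{-|\mathbf{w}|}),\tfrac12) \subset [0,\tfrac12)$; further pull-backs through the monotone bijections $L_\mathbf{b}$ and $L_\mathbf{v}$ remain inside $[0,\tfrac12)$ (using Lemma~\ref{lemma2.6d}(i), in particular $L_{v_{|\mathbf{v}|}}^{-1}(\tfrac12) \leq \tfrac14$, to ensure the final image of $L_\mathbf{v}$ lies in $[0,\tfrac12)$ before $L_\mathbf{b}$ is applied); and a final pull-back through $R$ produces $\tilde J \subset (\tfrac12,\tfrac34)$. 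Since $\tilde S_P|_J = S_P$ and $S_P(J) = (\tfrac12,\tfrac34) \subset (0,1) = \tilde S_P(\tilde J)$, monotonicity gives $J \subseteq \tilde J$, and $\tilde S_P$ inherits non-positive Schwarzian derivative from each of its factors by Subsection~\ref{subsec2.4}.

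The two components of $\tilde S_P(\tilde J) \setminus S_P(J) = (0,\tfrac12] \cup [\tfrac34,1)$ have lengths $\tfrac12$ and $\tfrac14$ respectively, both at least $\lambda(S_P(J)) = \tfrac14$, so \eqref{eq2.5} with $c=1$ applies to $\tilde S_P$ and for $x, y \in J$ yields
\[ \Big|\frac{DS_P(x)}{DS_P(y)}-1\Big| \leq M^{(1)} \cdot \frac{|S_P(x) - S_P(y)|}{1/4} = 4M^{(1)} \cdot |S_P(x)-S_P(y)|, \]
so the lemma holds with $C_3 := 4M^{(1)}$, a constant uniform in $P$ since $M^{(1)}$ depends only on $c=1$. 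The principal obstacle is the extension step: for arbitrarily long $\mathbf{v}, \mathbf{b}, \mathbf{w}$ one must check that every stage of the backwards pull-back remains a well-defined monotone subinterval of the appropriate domain and does not lose monotonicity at the critical endpoint $x=\tfrac12$ of the left branches; once $\tilde S_P$ is in hand, the Schwarzian property transfers to it and Koebe delivers the bound with a single universal constant.
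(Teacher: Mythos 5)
Your proof is correct and follows the same strategy as the paper: reduce the Jacobian ratio to the distortion of $DS_P$, extend $S_P$ with non-positive Schwarzian to a larger domain, and invoke the Koebe Principle. The only difference is cosmetic: the paper simply asserts, via $DS_P \ge 2$ and the composition structure, that such an extension with sufficiently large Koebe margins exists, whereas you construct $\tilde J$ explicitly as the pullback of $(0,1)$ through the factors of $S_P$; your check that this pullback stays inside $[0,\tfrac12)$ at each stage (so the composition remains monotone and well-defined) is what the paper leaves implicit, and it is correct—once you pull back through $L_{\mathbf b}$ the intermediate interval is already contained in $[0,\tfrac12)$, and every further $L_{v_j}^{-1}$ maps a subinterval of $[0,\tfrac12)$ back into $[0,\tfrac12)$. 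The Jacobian formula $J_{\mathbb P\times\lambda}F^\varphi = p_{u\mathbf{v}\mathbf{b}g\mathbf{w}}^{-1}\,DS_P$, the computation of the Koebe margins ($\tfrac12$ and $\tfrac14$ vs.\ $\lambda(T(J))=\tfrac14$, so $c=1$ works uniformly), and the resulting constant $C_3 = 4M^{(1)}$ all check out.
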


\begin{proof}
For each $P \in \mathcal P$ and all $(\omega,x), (\omega',y) \in P$ we have $\varphi(\omega,x)= \varphi(\omega',y) = \varphi_P$ and $\omega_j = \omega'_j$ for all $1 \le j \le \varphi_P$. Hence, for each measurable set $A \subseteq P$ we have
\[ \mathbb{P} \times \lambda(F^{\varphi}(A)) = \int_A \Big( \prod_{j=1}^{\varphi(\omega,x)} p^{-1}_{\omega_j} \Big) DT_{\omega}^{\varphi(\omega,x)}(x) \, d\mathbb{P} \times \lambda(\omega,x).\]
By Proposition~\ref{prop2.1a} we obtain
\[ J_{\mathbb{P} \times \lambda} F^{\varphi}(\omega,x) = \Big( \prod_{j=1}^{\varphi(\omega,x)} p^{-1}_{\omega_j} \Big) DT_{\omega}^{\varphi(\omega,x)}(x),\]
which, for each $P \in \mathcal P$ and all $(\omega,x), (\omega',y) \in P$, gives
\[ \Big|\frac{J_{\mathbb{P} \times \lambda} F^{\varphi}(\omega,x)}{J_{\mathbb{P} \times \lambda} F^{\varphi}(\omega',y)}-1\Big| = \Big| \frac{DT_{\omega}^{\varphi(\omega,x)}(x)}{DT_{\omega'}^{\varphi(\omega',y)}(y)} -1 \Big| = \Big| \frac{DS_P(x)}{DS_P(y)} -1 \Big|.\]
Let $c > 0$. As compositions of good and bad maps each $S_P$ has non-positive Schwarzian derivative and by Lemma \ref{lemma3.4a}(ii) each $S_P$ satisfies $DS_P \geq 2$. For this reason for each $P \in \mathcal{P}$ we can extend the domain $\pi_2(P)$ of $S_P$ on both sides to an interval $I_P \supseteq \pi_2(P)$ such that there exists an extension $\tilde{S}_P: I_P \rightarrow \mathbb{R}$ of $S_P$, i.e.~$\tilde{S}_P|_{\pi_2(P)} = S_P|_{\pi_2(P)}$, that has non-positive Schwarzian derivative and for which both components of $\tilde{S}_P(I_P) \backslash (\frac{1}{2},\frac{3}{4})$ have length at least $\frac{c}{4}$. Applying for each $P \in \mathcal{P}$ the Koebe Principle \eqref{eq2.5} with $I = I_P$ and $J = \pi_2(P)$ then gives a constant $C_3 > 0$ that depends only on $c$ such that for each $P \in \mathcal{P}$ and each  $x,y \in \pi_2(P)$ we have
\[ \Big| \frac{DS_P(x)}{DS_P(y)} -1 \Big| \leq C_3 \cdot |S_P(x) - S_P(y)|. \]
This gives the lemma.
\end{proof}

Recall the definition of the separation time from \eqref{q:septimetower}. We have the following lemma.

\begin{lemma}
Let $(\omega,x),(\omega',y) \in Y$. Then
\begin{align}\label{eq113d}
|x-y| \leq 2^{-s((\omega,x),(\omega',y))}.
\end{align}
Furthermore, if $(\omega,x),(\omega',y) \in P$ for some $P \in \mathcal P$, then
\begin{align}\label{eq114d}
\big|S_P (x)-S_P (y) \big| \leq 2^{-s(F^\varphi (\omega,x) ,F^\varphi (\omega',y))}.
\end{align}
\end{lemma}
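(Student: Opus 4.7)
The plan is to exploit the expansion $DT_\omega^{\varphi(\omega,x)} \geq 2$ established in Lemma~\ref{lemma3.4a}(ii), together with the observation that on each partition element the branch $S_P$ is a monotone increasing composition of $R$ and left branches $L_j$, so that a single iterate of $F^\varphi$ at least doubles distances in the second coordinate.

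\smallskip
First I would prove \eqref{eq113d}. Set $s = s((\omega,x),(\omega',y))$. If $s=0$ the inequality is immediate since $x,y \in (\tfrac12,\tfrac34)$ gives $|x-y|<\tfrac14 \leq 1$. So assume $s\geq 1$. Under the identification $\Delta_0 \cong Y$ and $G^\varphi \cong F^\varphi$, the definition of separation time in \eqref{q:septimetower} means that for every $k=0,1,\dots,s-1$ the iterates $(F^\varphi)^k(\omega,x)$ and $(F^\varphi)^k(\omega',y)$ lie in the same element $P_k\in\mathcal P$. Write their second coordinates as $x_k,y_k \in \pi_2(P_k)$; in particular $(x_0,y_0)=(x,y)$ and $(x_s,y_s)$ are the second coordinates of two points in $Y$, hence both lie in $(\tfrac12,\tfrac34)$. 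Since $S_{P_k}$ is a monotone $C^1$ map on $\pi_2(P_k)$ and $DS_{P_k}\geq 2$ by Lemma~\ref{lemma3.4a}(ii), the mean value theorem gives
\[|x_{k+1}-y_{k+1}| = |S_{P_k}(x_k)-S_{P_k}(y_k)| \geq 2|x_k-y_k|, \qquad k=0,\dots,s-1.\]
Iterating yields $|x-y| \leq 2^{-s}|x_s-y_s| \leq 2^{-s}\cdot \tfrac14 \leq 2^{-s}$, as required.

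\smallskip
For \eqref{eq114d}, suppose $(\omega,x),(\omega',y)\in P$ for some $P\in\mathcal P$. Then $F^\varphi(\omega,x) = (\sigma^{\varphi_P}\omega, S_P(x))$ and $F^\varphi(\omega',y) = (\sigma^{\varphi_P}\omega', S_P(y))$ both belong to $Y$. Applying the already-proved inequality \eqref{eq113d} to the pair $F^\varphi(\omega,x),F^\varphi(\omega',y) \in Y$ gives
\[|S_P(x)-S_P(y)| \leq 2^{-s(F^\varphi(\omega,x),F^\varphi(\omega',y))},\]
which is exactly \eqref{eq114d}.

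\smallskip
The only subtlety I expect to need to spell out is the compatibility of the separation time defined in \eqref{q:septimetower} (on the tower, for points at the same level) with the version used here for pairs of points in $Y$ under the identification $Y\cong \Delta_0$; once that is clear the argument is a direct iteration of the expansion bound from Lemma~\ref{lemma3.4a}(ii).
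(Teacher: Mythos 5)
Your proof is correct and follows essentially the same route as the paper: set $n=s((\omega,x),(\omega',y))$, iterate the expansion estimate $\inf DS_{P^{(k)}}\ge 2$ from Lemma~\ref{lemma3.4a}(ii) via the mean value theorem along the itinerary of common partition elements, bound the final second-coordinate gap by the length of $(\tfrac12,\tfrac34)$, and deduce \eqref{eq114d} by applying \eqref{eq113d} to $F^\varphi(\omega,x)$ and $F^\varphi(\omega',y)$. The only cosmetic difference is that you separate out the $s=0$ case and name the intermediate second coordinates $x_k,y_k$, whereas the paper handles both cases uniformly; otherwise the arguments coincide.
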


\begin{proof}
Write $n=s((\omega,x),(\omega',y))$ and, for each $k \in \{0,1,\ldots,n-1\}$, let $P^{(k)} \in \mathcal P$ be such that $(F^{\varphi})^{k}(\omega,x), (F^{\varphi})^{k}(\omega',y)  \in P^{(k)}$. Then for each $k \in \{0,1,\ldots,n-1\}$ the points $(S_{P^{(k-1)}} \circ \cdots \circ S_{P^{(0)}})(x)$ and $(S_{P^{(k-1)}} \circ \cdots \circ S_{P^{(0)}})(y)$ lie in the domain $\pi_2(P^{(k)})$ of $S_{P^{(k)}}$, so it follows from Lemma \ref{lemma3.4a}(ii) together with the Mean Value Theorem that
\[ \frac{|S_{P^{(k)}} \circ  \cdots \circ S_{P^{(0)}}(x)-S_{P^{(k)}} \circ \cdots \circ S_{P^{(0)}}(y)|}{|S_{P^{(k-1)}} \circ \cdots \circ S_{P^{(0)}}(x)-S_{P^{(k-1)}} \circ \cdots \circ S_{P^{(0)}}(y)|} \geq \inf DS_{P^{(k)}} \geq 2.\]
We conclude that
\[ \begin{split}
|x-y| \leq \ & 2^{-1} |S_{P^{(0)}} (x) - S_{P^{(0)}} (y)|\\
 \leq \ & \cdots \leq 2^{-n} |S_{P^{(n-1)}} \circ \cdots \circ S_{P^{(0)}} (x) - S_{P^{(n-1)}} \circ \cdots \circ S_{P^{(0)}} (y)| \leq 2^{-n},
\end{split}\]
which gives the first part of the lemma. For the second part, note that if $(\omega,x),(\omega',y) \in P$ for some $P\in \mathcal P$, then \eqref{eq114d} follows by applying \eqref{eq113d} to the points $F^{\varphi}(\omega,x) = (\sigma^{\varphi_P} \omega,S_P(x))$ and $F^{\varphi}(\omega',y) = (\sigma^{\varphi_P} \omega',S_P(y))$.
\end{proof}

\section{A Young tower for the random map} \label{sec4}

\subsection{The acs probability measure}\label{subsec4.1}
We are now in the position to construct a Young tower for the skew product $F$ according to the set-up from \cite[Section 1.1]{Y99} that we outlined in Section~\ref{s:youngtower}.

\vskip .2cm
As the base for the Young tower we take the set $Y$. The Young tower $\Delta$, the $l^{\text{th}}$ levels of the tower $\Delta_l$ and the tower map $G: \Delta \rightarrow \Delta$ are defined in Section~\ref{s:youngtower}, as well as the reference measure $m$ and the partition $\eta$ on $\Delta$. Following the general setup in \cite{Y99}, inducing the map $G$ on $\Delta_0 = Y \times \{0\}$ yields a transformation $G^{\varphi}$ on $\Delta_0$ given by $G^{\varphi}(z,0) = G^{\varphi(z)}(z,0)$. Recall that we identify $G^{\varphi}$ with $F^{\varphi}$ by identifying $\Delta_0$ with $Y$ and using the correspondence $G^{\varphi}(z,0) = (F^{\varphi}(z),0)$. We check that the conditions (t1)--(t6) from Section~\ref{s:youngtower} hold for this construction.

\begin{prop}\label{prop4.1n}
The conditions (t1)--(t6) hold for the map $G$ on the Young tower $\Delta$ defined above.
\end{prop}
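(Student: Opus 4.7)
The plan is to verify the six conditions (t1)--(t6) one by one, leveraging the work already done in Section~\ref{sec3}. Most of the conditions follow more-or-less directly from the explicit description of $\mathcal{P}$ given in Proposition~\ref{p:frtp} together with the estimates on $F^\varphi$ obtained in Subsection~\ref{subsec3.3}; the only slightly delicate point is handling the null set on which orbits eventually hit $\frac12$.

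First I would dispatch (t1) by noting that Proposition~\ref{p:frtp} gives $\varphi_P = 2 + |\mathbf s| + |\mathbf w|$, so by taking $\mathbf s = \epsilon$ and $|\mathbf w| \in \{0,1\}$ (choosing any $u \in \Sigma$, $g \in \Sigma_G$) we realize partition elements with $\varphi_P = 2$ and $\varphi_P = 3$, whose gcd is $1$. Condition (t2) reduces to noting that $Y \subseteq \tilde\Omega \times (\tfrac12,\tfrac34)$ so $m(Y) = \mathbb P \times \lambda(Y) \le \tfrac14 < \infty$, and that all sets in the tower construction are products of Borel sets with singletons and therefore lie in $\mathcal B$. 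For (t5), Proposition~\ref{prop2.1a} applies because each $P \in \mathcal P$ is an invertibility domain of $F^\varphi$ and both $F^\varphi|_P$ and its inverse are non-singular with respect to $\mathbb P \times \lambda$; the explicit formula $J_{\mathbb P \times \lambda}F^\varphi(\omega,x) = \big(\prod_{j=1}^{\varphi_P}p_{\omega_j}^{-1}\big) DT_\omega^{\varphi_P}(x)$ derived in the proof of Lemma~\ref{lemma3.8}, combined with Lemma~\ref{lemma3.4a}(ii), gives $J_m G^\varphi > 0$ $m$-a.e.

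For (t3) I would argue that the restriction of $F^\varphi$ to $P_{u \mathbf s g \mathbf w}$ is a bijection onto $Y$: the shift $\sigma^{\varphi_P}$ is a bijection between $[u \mathbf s g \mathbf w] \cap \tilde\Omega$ and $\tilde\Omega$ (removing finitely many prefixed symbols preserves ``infinitely many good digits''), while $S_P = R^{|\mathbf w|} \circ L_g \circ L_{\mathbf b} \circ L_{\mathbf v} \circ R$ is a strictly monotone $C^1$ map from $\pi_2(P)$ onto $(\tfrac12,\tfrac34)$ by Lemma~\ref{lemma3.4a}(ii). The condition $T_\omega^n(x) \neq \frac12$ for all $n$ defining $Y$ is preserved and reflected by $F^\varphi$, so the image really lands in $Y$ and the map is bijective on the set where everything is defined. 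Condition (t4) will follow from the observation that $\bigvee_{n\ge 0}(G^\varphi)^{-n}\mathcal P$ separates points in $\Delta_0$: on the symbolic coordinate, partition elements encode arbitrarily long initial cylinders so they generate the product Borel $\sigma$-algebra on $\tilde\Omega$; on the interval coordinate, Lemma~\ref{lemma3.4a}(ii) gives uniform expansion $DS_P \ge 2$, so diameters of $n$-th refinements on the second coordinate shrink at least like $2^{-n}$. Extending level-wise to all of $\Delta$ via $G$ then yields that $\eta$ generates $\mathcal B$.

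Finally, (t6) is essentially already in hand: Lemma~\ref{lemma3.8} provides a constant $C_3>0$ with
\[ \Big|\frac{J_{\mathbb P \times \lambda}F^\varphi(\omega,x)}{J_{\mathbb P\times \lambda}F^\varphi(\omega',y)} - 1\Big| \le C_3\,|S_P(x) - S_P(y)|\]
for $(\omega,x),(\omega',y)\in P$, and \eqref{eq114d} bounds $|S_P(x)-S_P(y)|$ by $2^{-s(F^\varphi(\omega,x),F^\varphi(\omega',y))}$; combining them gives (t6) with $\beta = \tfrac12$ and $C = C_3$. The main obstacle I anticipate is the bookkeeping for (t3) and (t4), namely being explicit about the (measure-zero) exceptional set on which iterates hit $\frac12$ or land outside $\tilde\Omega$, and checking that the bijection in (t3) together with the identification of $G^\varphi$ with $F^\varphi$ correctly matches the definition of $G$ on top levels of the tower.
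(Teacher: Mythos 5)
Your proposal is correct and follows essentially the same route as the paper: the paper also dispatches (t1), (t2), (t3), (t5) as immediate from the explicit first return partition $\mathcal P$, verifies (t4) by the point-separation argument combining the cylinder structure on the symbolic coordinate with the expansion estimate $DS_P\ge 2$ from Lemma~\ref{lemma3.4a}(ii), and obtains (t6) by chaining Lemma~\ref{lemma3.8} with \eqref{eq114d} to get $\beta=\tfrac12$. You merely spell out (t1)--(t3) and (t5) in more detail than the paper does.
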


\begin{proof}
From the first return time partition $\mathcal P$ it is clear that (t1), (t2), (t3) and (t5) hold.

For (t4) it is enough to show that the collection
\[ \bigvee_{n \ge 0} G^{-n} \eta = \{ E_0 \cap G^{-1} E_1 \cap \cdots \cap G^{-n} E_n \, : \, E_i \in \eta,\, 1 \le i \le n, \, n \ge 0 \}\]
separates points. To show this, let $(z_1, l_1), (z_2, l_2) \in \Delta$ be two points. If $z_1 = z_2$ and $l_1 \neq l_2$ and $P \in \mathcal P$ is such that $z_1 \in P$, then $\Delta_{l_1,P}, \Delta_{l_2,P} \in \eta$ are sets that separate $(z_1, l_1)$ and $(z_2, l_2)$. Assume that $z_1 \neq z_2$. Lemma~\ref{lemma3.4a}(ii) implies that the map $F^\varphi$ is expanding on $Y$, so there exist an $N \ge 0$ and two disjoint sets $A, E \in \bigvee_{n=0}^N (F^\varphi)^{-n} \mathcal P$ such that $z_1 \in A$ and $z_2 \in E$. On these sets the first $N$ first return times to $Y$ are constant, meaning that if $K>0$ is such that $G^K(z_1,0) = ((F^\varphi)^N(z_1),0)$, then $A \times \{ l_1 \} \in \bigvee_{n=0}^{K+l_1} G^{-n}\eta$ and if $L>0$ such that $G^L(z_2,0) = ((F^\varphi)^N(z_2),0)$, then $E \times \{ l_2 \} \in \bigvee_{n=0}^{L+l_2} G^{-n}\eta$. Note that $(z_1, l_1) \in A \times \{l_1\}$ and $(z_2, l_2) \in E \times \{l_2\}$ and $A \times \{ l_1 \} \cap E \times \{l_2\} = \emptyset$. Hence, (t4) holds.

Finally, from Lemma \ref{lemma3.8} and \eqref{eq114d} we obtain that
\begin{align}\label{eq92a}
\Big|\frac{J_{\mathbb{P} \times \lambda} F^{\varphi}(z_1)}{J_{\mathbb{P} \times \lambda} F^{\varphi}(z_2)}-1\Big| \leq C_3 \cdot 2^{-s(F^\varphi (z_1) ,F^\varphi (z_2))}
\end{align}
for each $P \in \mathcal P$ and all $z_1,z_2 \in P$. This gives (t6) with $\beta = \frac12$ and the proposition follows.
\end{proof}

Now Proposition~\ref{prop3.3b}(i) and Theorem~\ref{t:young1} imply the existence of a probability measure $\nu$ on $(\Delta, \mathcal B)$ that is $G$-invariant, exact and absolutely continuous with respect to $m$ with a density that is bounded and bounded away from zero and that satisfies \eqref{q:densityC}. We use this to construct the invariant measure for $F$ that is promised in Theorem~\ref{result1b}. Define
\[ \pi : \Delta \rightarrow \Sigma^{\mathbb{N}} \times [0,1], \, (z,l) \mapsto F^l(z).\]
Then
\[ \begin{split}
 \pi(G(z,l)) =\ & \pi(z,l+1) = F^{l+1}(z) = F(\pi(z,l)), \qquad  l < \varphi(z)-1, \\
 \pi(G(z,l)) =\ & \pi(F^{\varphi}(z),0) = F^{\varphi}(z) = F(\pi(z,l)), \qquad l = \varphi(z)-1.
\end{split}\]
So $\pi \circ G = F \circ \pi$. Let $\rho = \nu \circ \pi^{-1}$ be the pushforward measure of $\nu$ under $\pi$.

\begin{lemma}\label{lemma4.1e}
The probability measure $\rho$ satisfies the following properties.
\begin{itemize}
\item[(i)] $F$ is measure preserving and mixing with respect to $\rho$.
\item[(ii)] $\rho$ is absolutely continuous with respect to $\mathbb{P} \times \lambda$.
\item[(iii)] We have
\[ \rho(A \cap Y) = \nu((A \cap Y) \times \{0\}), \qquad A \in \mathcal F.\]
\end{itemize}
\end{lemma}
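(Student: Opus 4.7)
The plan is to exploit the semi-conjugacy $\pi \circ G = F \circ \pi$ already noted just before the lemma, together with the fact that by construction $m$ on the level $\Delta_l$ corresponds, under the identification $(z,l) \mapsto z$, to $\mathbb{P} \times \lambda$ restricted to $\{z \in Y : \varphi(z) > l\}$. With this in hand, all three statements reduce to routine bookkeeping.

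I would handle (iii) first, as it clarifies the role of the base. For $A \in \mathcal{F}$, a point $(z,l) \in \Delta$ lies in $\pi^{-1}(A \cap Y)$ iff $F^l(z) \in A \cap Y$. If $l \geq 1$, then $0 < l < \varphi(z)$, and by the definition of $\varphi$ as the first return time to $Y$ this forces $F^l(z) \notin Y$, a contradiction. Hence $l = 0$ and $z \in A \cap Y$, so $\pi^{-1}(A \cap Y) = (A \cap Y) \times \{0\}$, from which (iii) is immediate.

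For (i), $F$-invariance is immediate from the semi-conjugacy and the $G$-invariance of $\nu$:
\[ \rho(F^{-1} A) = \nu(\pi^{-1} F^{-1} A) = \nu(G^{-1} \pi^{-1} A) = \nu(\pi^{-1} A) = \rho(A). \]
By Theorem~\ref{t:young1}(iii), $G$ is exact and hence mixing with respect to $\nu$; pushing this through $\pi$ yields
\[ \rho(A \cap F^{-n} B) = \nu(\pi^{-1} A \cap G^{-n} \pi^{-1} B) \longrightarrow \nu(\pi^{-1} A) \cdot \nu(\pi^{-1} B) = \rho(A) \rho(B) \]
for all measurable $A, B$, which is the desired mixing of $F$ with respect to $\rho$.

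For (ii), let $A \in \mathcal{F}$ with $(\mathbb{P} \times \lambda)(A) = 0$ and decompose the tower into its levels to get $\rho(A) = \sum_{l \geq 0} \nu(\pi^{-1}(A) \cap \Delta_l)$. On the $l$-th level, $\pi^{-1}(A) \cap \Delta_l$ corresponds to $F^{-l}(A) \cap \{z \in Y : \varphi(z) > l\}$, which has $m$-measure equal to $(\mathbb{P} \times \lambda)(F^{-l}(A) \cap \{\varphi > l\})$. Since each $T_j$ is non-singular for $\lambda$, the skew product $F$ is non-singular for $\mathbb{P} \times \lambda$, so $(\mathbb{P} \times \lambda)(F^{-l}(A)) = 0$ for every $l$. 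Combined with $\nu \ll m$ from Theorem~\ref{t:young1}(i), every summand vanishes and $\rho(A) = 0$. The only point that requires a moment of care is the bookkeeping between the three avatars of the base (as a subset of $Y$, of $\Delta_0$, and of $\Sigma^{\mathbb{N}} \times [0,1]$) and the corresponding measures; I do not anticipate any genuine obstacle beyond this.
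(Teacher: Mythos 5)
Your proof is correct and follows essentially the same route as the paper: (iii) via the observation that $0<l<\varphi(z)$ forces $F^l(z)\notin Y$, (ii) via non-singularity of $F$ and the level-by-level decomposition of $\pi^{-1}(A)$, and (i) from the semi-conjugacy $\pi\circ G=F\circ\pi$ together with exactness/mixing of $(G,\nu)$. The only difference is that you spell out the mixing computation that the paper leaves implicit; this is fine and changes nothing of substance.
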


\begin{proof}
Part (i) immediately follows from the properties of the measure $\nu$ and the fact that $\pi \circ G = F \circ \pi$. For (ii), let $A \in \mathcal F$ be such that $\mathbb{P} \times \lambda(A) = 0$. Using that $F$ is non-singular with respect to $\mathbb{P} \times \lambda$, we obtain that
\begin{align*}
m(\pi^{-1}(A)) = m\Big(\Delta \cap \Big(\bigcup_{l \ge 0} F^{-l} (A) \times \{l\}\Big)\Big) \leq \sum_{l \ge 0} \mathbb{P} \times \lambda(F^{-l}(A)) = 0.
\end{align*}
Since $\nu$ is absolutely continuous with respect to $m$, it follows that $\rho(A) = \nu(\pi^{-1}A) = 0$. For (iii) let $A \in \mathcal F$. We have
\[ \pi^{-1}(A \cap Y) = \bigcup_{P \in \mathcal P} \bigcup_{l=0}^{\varphi_P-1} (F^{-l} (A \cap Y) \cap P) \times \{l\}.\]
By definition of $\varphi_P$, we have $F^l(z) \notin Y$ for each $z \in P$ and each $l \in \{1,\ldots,\varphi_P-1\}$. Therefore
\[ \pi^{-1}(A \cap Y) = \bigcup_{P \in \mathcal P} (A \cap P) \times \{0\} = (A \cap Y) \times \{0\}. \qedhere\]
\end{proof}

Combining Lemma \ref{lemma4.1e} with Lemma \ref{l:productmeasure} yields that there exists a probability measure $\mu$ that is absolutely continuous with respect to $\lambda$ and such that $\rho = \mathbb{P} \times \mu$. In other words, $\mu$ is an acs measure for $F$. We will now prove Theorem \ref{result1b}, which shows that $\mu$ is in fact the only acs measure for $F$.

\begin{proof}[Proof of Theorem \ref{result1b}]
It follows from Lemma \ref{lemma4.1e}(i) that $F$ is mixing with respect to $\mathbb{P} \times \mu$. Hence, to obtain that $\mu$ is the only acs measure for $F$, it suffices to show that $\frac{d\mu}{d\lambda} > 0$ holds $\lambda$-a.e. Theorem~\ref{t:young1} asserts that there is a constant $C_4 \ge 1$ such that
\begin{align}\label{eq119y}
\frac{1}{C_4} \leq \frac{d\nu}{dm} \leq C_4.
\end{align}
Let $B  \subseteq (\frac{1}{2},\frac{3}{4})$ be a Borel set. Lemma \ref{lemma4.1e}(iii) and \eqref{eq119y} imply that
\begin{equation*}
\mu(B) \ge \nu((\tilde{\Omega} \times B) \cap Y\times \{0\}) \geq C_4^{-1} \cdot m((\tilde{\Omega} \times B) \cap Y \times \{0\}) = C_4^{-1} \cdot \lambda(B).
\end{equation*}
Since $B$ was arbitrary, we have $\frac{d\mu}{d\lambda}(x) \geq C_4^{-1}$ for $\lambda$-a.e.~$x \in (\frac12, \frac34)$. Recall that the density $\frac{d\mu}{d\lambda}$ is a fixed point of the Perron-Frobenius operator $\mathcal{P}_{F,\mathbf p}$ from \eqref{eqn3.22}. Fix some $g \in \Sigma_G$. The map $T_g^2|_{(\frac{1}{2},\frac{3}{4})}: (\frac{1}{2},\frac{3}{4}) \rightarrow (0,1)$ is a measurable bijection with measurable inverse. For each $x \in (0,1)$ let $y_x$ be the unique element in $(\frac{1}{2},\frac{3}{4})$ that satisfies $x = T_g^2(y_x)$. Furthermore, note that $\sup_{y \in (\frac12, \frac34)} DT_g^2(y) \le 2DT_g(0)$. We conclude that for $\lambda$-a.e.~$x \in (0,1)$
\[ \frac{d\mu}{d\lambda}(x) = \mathcal{P}_{F,\mathbf p}^2 \frac{d\mu}{d\lambda} (x) \geq p_g^2 \frac{\frac{d\mu}{d\lambda} (y_x)}{DT_g^2(y_x)} \geq  \frac{p_g^2C_4^{-1}}{2DT_g(0)} >0. \]
This gives that $\mu$ is the unique acs measure and that the density $\frac{d\mu}{d\lambda}$ is bounded away from zero.
\end{proof}

\begin{remark}\label{r:lipschitz}{\rm
Besides Theorems \ref{result1a} and \ref{result1b} it can also be shown that all the results from Theorem 1.2 in \cite{Zeegers21} carry over. Namely, by following the same steps as in Section 3 of \cite{Zeegers21} it can be shown that $F$ admits, independent of the value of $\theta$, a unique (up to scalar multiplication) acs measure that is $\sigma$-finite and ergodic and for which the density is bounded away from zero, is locally Lipschitz on $(0,\frac{1}{2})$ and $[\frac{1}{2},1)$ and is not in $L^q$ for any $q > 1$. This measure is infinite if $\theta \geq 1$ and coincides with $\mu$ if $\theta < 1$. 
}\end{remark}

\subsection{Decay of correlations}

Recall that for $\alpha \in (0,1)$ we have set $\mathcal H_\alpha$ for the set of $\alpha$-H\"older continuous functions on $\Sigma^\mathbb N \times [0,1]$ with metric $d$ as in \eqref{q:metric}. Also recall the definition of the function spaces $\mathcal C_\delta$ on $\Delta$ from \eqref{q:cbeta}.

\begin{lemma}\label{lemma4.2d}
Let $\alpha \in (0,1)$ and $h \in \mathcal{H}_{\alpha}$. Then $h \circ \pi \in \mathcal{C}_{1/2^{\alpha}}$.
\end{lemma}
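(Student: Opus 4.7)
\medskip
\noindent\textbf{Plan of proof.} The goal is to produce a constant $K>0$ so that for all $\upsilon_1,\upsilon_2 \in \Delta$ with $\upsilon_1 \ne \upsilon_2$,
\[
|h(\pi(\upsilon_1)) - h(\pi(\upsilon_2))| \le K \cdot (2^{-\alpha})^{s(\upsilon_1,\upsilon_2)}.
\]
Since $\mathcal H_\alpha$ functions are bounded on the bounded metric space $(\Sigma^\mathbb N \times [0,1], d)$, the case $s(\upsilon_1,\upsilon_2)=0$ (which, by the definition of $s$, includes the case when the two points lie in different levels of $\Delta$) is immediate from $|h(\pi(\upsilon_1)) - h(\pi(\upsilon_2))| \le 2\|h\|_\infty$. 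So it suffices to fix $n \geq 1$, assume $s(\upsilon_1,\upsilon_2)=n$, and prove
\[
d(\pi(\upsilon_1),\pi(\upsilon_2)) \le C_0 \cdot 2^{-n}
\]
for a constant $C_0$ independent of $\upsilon_1,\upsilon_2,n$; applying $h \in \mathcal H_\alpha$ then yields the claim.

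For $s(\upsilon_1,\upsilon_2)=n \ge 1$ the points share a tower level $l$, so $\upsilon_i = (z_i,l)$ with $z_i=(\omega^{(i)},x_i) \in Y$. By definition of $s$ in \eqref{q:septimetower}, the iterates $(F^\varphi)^k z_1$ and $(F^\varphi)^k z_2$ lie in a common element $P_k \in \mathcal P$ for every $k=0,\dots,n-1$, each $P_k$ having first return time $\varphi_k \ge 2$. Writing $\Phi_n = \varphi_0 + \cdots + \varphi_{n-1}$, the shared partition elements force $\omega^{(1)}_j = \omega^{(2)}_j$ for $j=1,\dots,\Phi_n$. Since $l < \varphi_0$ and $\varphi_k \ge 2$, one has $\Phi_n - l \ge n$, giving the bound on the shift component of $d$:
\[
2^{-\min\{i \, :\, (\sigma^l \omega^{(1)})_i \neq (\sigma^l \omega^{(2)})_i\}} \le 2^{-(\Phi_n - l)} \le 2^{-n}.
\]

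The remaining task is to bound $|T_\omega^l(x_1) - T_\omega^l(x_2)|$, writing $\omega$ for the common prefix. For $l=0$ this is just $|x_1-x_2| \le 2^{-n}$ by \eqref{eq113d}. For $1 \le l < \varphi_0$, by the Mean Value Theorem there is $\xi$ between $x_1$ and $x_2$ with $|T_\omega^l(x_1) - T_\omega^l(x_2)| = DT_\omega^l(\xi) \cdot |x_1 - x_2|$. Lemma~\ref{lemma3.4a}(i) applied with $j = l$ gives $DT_{\sigma^l\omega}^{\varphi_0 - l}(T_\omega^l(\xi)) \ge \tfrac12$, and hence
\[
DT_\omega^l(\xi) \le 2 \cdot DS_{P_0}(\xi).
\]
The proof of Lemma~\ref{lemma3.8} already establishes (via the Koebe Principle applied to $S_{P_0}$, which has non-positive Schwarzian and expansion $\ge 2$) bounded distortion for $S_{P_0}$ on $\pi_2(P_0)$, so $DS_{P_0}(\xi)$ is comparable to $DS_{P_0}(\xi')$ uniformly in $P_0$, where $\xi'$ is the point given by MVT for $S_{P_0}$: $DS_{P_0}(\xi') = |S_{P_0}(x_1) - S_{P_0}(x_2)|/|x_1-x_2|$. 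Combining and using \eqref{eq114d},
\[
|T_\omega^l(x_1) - T_\omega^l(x_2)| \le C_1 \cdot |S_{P_0}(x_1) - S_{P_0}(x_2)| \le C_1 \cdot 2^{-(n-1)}.
\]
Adding this to the shift contribution yields $d(\pi(\upsilon_1),\pi(\upsilon_2)) \le C_0 \cdot 2^{-n}$, as required.

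The only subtle point is controlling $T_\omega^l$ for intermediate levels $0 < l < \varphi_0$: orbits in $\pi_2(P_0)$ may pass deep into the region where some $DT_j$ is tiny, and the separation time does not probe these intermediate levels. The trick is to route through the full return map $S_{P_0}$, whose bounded distortion has been paid for already in Lemma~\ref{lemma3.8}; the lower bound from Lemma~\ref{lemma3.4a}(i) on the \emph{tail} derivative $DT_{\sigma^l\omega}^{\varphi_0 - l}$ is precisely what allows $DT_\omega^l$ to inherit that distortion control.
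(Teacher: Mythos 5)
Your proof is correct, and at the level of the broad structure it matches the paper's: handle the $s=0$ case with boundedness, split the metric $d$ into the symbolic and the interval component, reduce the symbolic part to a shift-index count, and reduce $|T_\omega^l(x_1)-T_\omega^l(x_2)|$ to $|S_{P_0}(x_1)-S_{P_0}(x_2)|$ and then to \eqref{eq114d}. The one place you take a genuinely different route is in passing from $T_\omega^l$ to $S_{P_0}$. You apply the Mean Value Theorem to $T_\omega^l$ to produce a point $\xi$, use the chain rule and Lemma~\ref{lemma3.4a}(i) to get $DT_\omega^l(\xi)\le 2\,DS_{P_0}(\xi)$, and then invoke the Koebe bounded distortion of $S_{P_0}$ (already established inside the proof of Lemma~\ref{lemma3.8}) to trade the MVT point $\xi$ for the MVT point $\xi'$ of $S_{P_0}$. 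That last step is valid but unnecessary: the paper instead applies the Mean Value Theorem to the \emph{tail} map $T_{\sigma^l\omega}^{\varphi_P-l}$ on the interval between $T_\omega^l(x_1)$ and $T_\omega^l(x_2)$, which yields directly
\[
\frac{|S_{P}(x_1)-S_{P}(x_2)|}{|T_\omega^l(x_1)-T_\omega^l(x_2)|}
  = DT_{\sigma^l\omega}^{\varphi_P-l}\bigl(T_\omega^l(\xi)\bigr)\ge \tfrac12
\]
by Lemma~\ref{lemma3.4a}(i) (or (ii) when $l=0$), with no distortion needed. Your version costs a slightly worse constant ($4+C_3$ in place of $4$) and adds a dependence on the Koebe constant, but it proves the same bound. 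It is worth noticing that your argument and the paper's use the same ingredient (the lower bound $\ge\frac12$ on the tail derivative from Lemma~\ref{lemma3.4a}(i)); the paper just deploys the MVT at the point where it can be cashed in without a distortion correction.
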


\begin{proof}
Since $h \in \mathcal{H}_{\alpha}$, there exists a constant $C_5 > 0$ such that 
\begin{align}\label{eq127d}
|h(z_1)-h(z_2)| \leq C_5 \cdot d(z_1,z_2)^{\alpha} \quad \text{ for all } z_1,z_2 \in \Sigma^{\mathbb{N}} \times [0,1].
\end{align}
From this it is easy to see that $\|h\|_{\infty} < \infty$. Let $\upsilon_1 = (z_1, l_1), \upsilon_2 = (z_2, l_2) \in \Delta$. If $l_1\neq l_2$ or if $z_1$ and $z_2$ lie in different elements of $\mathcal P$, then $s(\upsilon_1, \upsilon_2) =0$ and
\begin{equation}\label{q:gnapi}
|h\circ \pi(\upsilon_1)-h \circ \pi (\upsilon_2)| = |h(F^{l_1}(z_1))- h(F^{l_2}(z_2))| \le \| h \|_\infty = \| h \|_\infty \cdot 2^{-\alpha s(\upsilon_1, \upsilon_2)}.
\end{equation}
Hence, to prove that $h \circ \pi \in \mathcal{C}_{1/2^\alpha}$, it remains to consider the case that $z_1,z_2 \in P$ for some $P \in \mathcal P$ and $l_1 = l_2 = l \in \{0, \ldots, \varphi_P-1\}$. Write $z_1 = (\omega,x)$ and $z_2 =(\omega',y)$. Note that $\omega_j = \omega_j'$ for each $j \in \{1,2,\ldots,\varphi_P\}$. Hence,
\[ 2^{-\min \{ i \in \mathbb N \, : \, \omega_{l + i} \neq \omega_{l +i}' \}} \le 2^{-\min \{ i \in \mathbb N \, : \, \omega_{\varphi_P-1 + i} \neq \omega_{\varphi_P-1 +i}' \}} \le 2^{-s(z_1, z_2)}.\]
Furthermore, it follows from the Mean Value Theorem together with Lemma \ref{lemma3.4a}(i) that
\[ \frac{|T_{\omega}^{\varphi_P}(x) - T_{\omega}^{\varphi_P}(y)|}{|T_{\omega}^l(x) - T_{\omega}^l(y)|} = \frac{|T_{\sigma^l \omega}^{\varphi_P-l}(T_{\omega}^l (x)) - T_{\sigma^l \omega}^{\varphi_P-l}(T_{\omega}^l (y))|}{|T_{\omega}^l(x) - T_{\omega}^l(y)|} \geq \frac{1}{2}.\]
Combining this with \eqref{eq114d} yields that
\[ |T_{\omega}^l(x) - T_{\omega}^l(y)| \leq 2 \cdot 2^{-s(F^{\varphi}(z_1),F^{\varphi}(z_2))} = 4 \cdot 2^{-s(z_1,z_2)}\]
and hence by \eqref{eq127d},
\[ |h(F^{l}(z_1))- h(F^{l}(z_2))| \le C_5 (2^{-s(z_1,z_2)} + 4 \cdot 2^{-s(z_1,z_2)})^\alpha = 5^\alpha C_5 \cdot 2^{-\alpha s(z_1,z_2)}. \]
Together with \eqref{q:gnapi} this gives the result.
\end{proof}

We now have all the ingredients to prove Theorem \ref{result2a}.

\begin{proof}[Proof of Theorem \ref{result2a}]
To prove the theorem, we would like to use Theorem~\ref{t:young1}(iv), which requires us to bound $m(\hat \varphi >n)$, where
\[ \hat{\varphi} : \Delta \rightarrow \mathbb{N}_0, \,  \upsilon \mapsto \inf \{n \ge 0 \, :\, G^n (\upsilon) \in \Delta_0 \}.\]
Since
\[ \begin{split}
\{\hat{\varphi} = 0\} =\ & \Delta_0 = \bigcup_{P \in \mathcal P: \varphi_P > 0} \Delta_{0,P}, \\
\{\hat{\varphi} = n\} =\ & \bigcup_{P \in \mathcal P : \varphi_P > n} \Delta_{\varphi_P-n,P}, \qquad n \ge 1,
\end{split}\]
we have for each $n \ge 0$ that
\[ m(\hat{\varphi} = n) =\sum_{P \in \mathcal P : \varphi_P > n} \mathbb{P} \times \lambda(P) = \mathbb{P} \times \lambda(\varphi > n).\]
It follows from Proposition \ref{prop3.3b}(ii) that for each $\gamma \in (\gamma_1,0)$ there is an $M >0$ and an $N \ge 1$ such that for each $n \ge N$,
\[ \mathbb{P} \times \lambda(\varphi > n) \le M \cdot n^{\gamma-1}.\]
Thus, for all $n \geq N$,
\begin{align}\label{eq140g}
m(\hat{\varphi} > n) = \sum_{k > n} m(\hat{\varphi} = k) \le M \sum_{k \geq n} k^{\gamma-1} \le M \cdot n^{\gamma-1} + 
M \int_n^{\infty} x^{\gamma-1} dx. 
\end{align}
So, $m(\hat{\varphi} > n) = O(n^\gamma)$. Combining Proposition \ref{prop3.3b}(i), Proposition~\ref{prop4.1n} and Theorem~\ref{t:young1}(iv) now gives that for each $\gamma \in (\gamma_1,0)$, $\hat f \in L^{\infty}(\Delta,\nu)$, $\delta \in (0,1)$ and $\hat h \in \mathcal{C}_{\delta}$,
\begin{align}\label{eq138g}
\Big|\int_{\Delta} \hat f \circ G^n \cdot \hat h \, d\nu - \int_{\Delta} \hat f \, d\nu  \int_{\Delta} \hat h \, d\nu \Big| = O(n^{\gamma}).
\end{align}
Now, let $\gamma \in (\gamma_1,0)$, $f \in L^{\infty}(\Sigma^{\mathbb{N}} \times [0,1], \mathbb{P} \times \mu)$ and $h \in \mathcal{H}$. Using that $\mathbb{P} \times \mu = \nu \circ \pi^{-1}$ and $\pi \circ G = F \circ \pi$, it then follows that 
\[ |Cor_n(f,h)| = \Big|\int_{\Delta} (f \circ \pi) \circ G^n \cdot (h \circ \pi) \, d\nu - \int_{\Delta} f\circ \pi  \, d\nu  \int_{\Delta} h \circ \pi \, d\nu \Big|.\]
Since $h \in \mathcal H$, it holds that $h \in \mathcal{H}_{\alpha}$ for some $\alpha \in (0,1)$, so $h \circ \pi \in \mathcal{C}_{1/2^\alpha}$ by Lemma \ref{lemma4.2d}. Since also $f \circ \pi \in L^{ \infty}(\Delta,\nu)$, we obtain the result from \eqref{eq138g} with $\hat f = f \circ \pi$ and $\hat h = h \circ \pi$. 
\end{proof}



In order to prove Theorem \ref{result2b}, we need the following lemma.

\begin{lemma}\label{lemma4.3p}
There exists a constant $C_6 > 0$ such that for each $P \in \mathcal P$ and $z_1,z_2 \in P$,
\[ \Big| \log \frac{J_{\mathbb{P} \times \mu} F^{\varphi}(z_1)}{J_{\mathbb{P} \times \mu} F^{\varphi}(z_2)}\Big| \leq C_6 \cdot 2^{-s(z_1,z_2)}.\]
\end{lemma}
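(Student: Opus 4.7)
The plan is to reduce the $\mathbb{P} \times \mu$-Jacobian to the $\mathbb{P} \times \lambda$-Jacobian by means of the density $\varphi_\mu := \frac{d\mu}{d\lambda}$, and then control the resulting three factors using Lemma~\ref{lemma3.8} on the one hand and the H\"older-type density estimate \eqref{q:densityC} on the other. The change-of-variables for Radon--Nikodym derivatives gives
\[
J_{\mathbb{P}\times\mu} F^\varphi(\omega,x) = \frac{\varphi_\mu(x)}{\varphi_\mu\bigl(T_\omega^{\varphi(\omega,x)}(x)\bigr)} \cdot J_{\mathbb{P}\times\lambda} F^\varphi(\omega,x),
\]
so for $z_1=(\omega_1,x_1), z_2=(\omega_2,x_2)\in P$, noting that $\omega_1$ and $\omega_2$ agree on their first $\varphi_P$ coordinates and hence $T_{\omega_i}^{\varphi_P}(x_i)=S_P(x_i)$, the quantity to be bounded splits as
\[
\log\frac{J_{\mathbb{P}\times\mu}F^\varphi(z_1)}{J_{\mathbb{P}\times\mu}F^\varphi(z_2)} = \log\frac{\varphi_\mu(x_1)}{\varphi_\mu(x_2)} - \log\frac{\varphi_\mu(S_Px_1)}{\varphi_\mu(S_Px_2)} + \log\frac{J_{\mathbb{P}\times\lambda}F^\varphi(z_1)}{J_{\mathbb{P}\times\lambda}F^\varphi(z_2)}.
\]
I would then estimate each of the three summands separately by a constant multiple of $2^{-s(z_1,z_2)}$.

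For the last summand, Lemma~\ref{lemma3.8} combined with \eqref{eq114d} and the identity $s(F^\varphi z_1,F^\varphi z_2)=s(z_1,z_2)-1$ yields $|\text{ratio}-1|\le 2C_3\cdot 2^{-s(z_1,z_2)}$. For the first summand, the key observation is that, by Lemma~\ref{lemma4.1e}(iii) together with $m|_{\Delta_0}=\mathbb{P}\times\lambda|_Y$, the density $\frac{d\nu}{dm}$ restricted to $\Delta_0$ equals $\varphi_\mu\circ\pi_2$ under the identification $\Delta_0\cong Y$; applying \eqref{q:densityC} at the base level with $\beta=\tfrac12$ (cf.~Proposition~\ref{prop4.1n}) therefore gives $|\varphi_\mu(x_1)/\varphi_\mu(x_2)-1|\le C^+\cdot 2^{-s(z_1,z_2)}$.

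The middle summand is the delicate one and constitutes the main obstacle: writing $w_i=F^\varphi z_i$, the two points $w_1,w_2\in Y$ need not lie in a common element of $\mathcal P$, so the tower estimate \eqref{q:densityC} cannot be invoked for them directly. If $s(z_1,z_2)\ge 2$, then $s(w_1,w_2)=s(z_1,z_2)-1\ge 1$ and $w_1,w_2$ do lie in a common partition element, so the same application of \eqref{q:densityC} gives $|\varphi_\mu(S_Px_1)/\varphi_\mu(S_Px_2)-1|\le 2C^+\cdot 2^{-s(z_1,z_2)}$. The remaining case $s(z_1,z_2)=1$ I would handle by falling back on the uniform bound $C_4^{-1}\le\varphi_\mu\le C_4$ from \eqref{eq119y}, which yields the crude estimate $|\log(\varphi_\mu(S_Px_1)/\varphi_\mu(S_Px_2))|\le 2\log C_4\le 4\log C_4\cdot 2^{-s(z_1,z_2)}$. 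Since all relevant ratios stay in the fixed interval $[C_4^{-2},C_4^2]$, a Lipschitz bound on $\log$ converts each $|\cdot-1|$ estimate into a $|\log(\cdot)|$ estimate of the same order, and summing the three contributions produces the lemma with a suitable $C_6$.
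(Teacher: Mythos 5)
Your proof is essentially the same as the paper's: you reduce the $\mathbb{P}\times\mu$-Jacobian to the $\mathbb{P}\times\lambda$-Jacobian via the density (which is exactly the paper's factorisation $J_{\mathbb{P}\times\mu}F^\varphi = \frac{d\nu}{dm}\circ F^\varphi \cdot J_{\mathbb{P}\times\lambda}F^\varphi \cdot \frac{dm}{d\nu}$ once $\frac{d\nu}{dm}\big|_{\Delta_0}$ is identified with $\varphi_\mu\circ\pi_2$), split the log into three terms, bound the Lebesgue-Jacobian term with Lemma~\ref{lemma3.8} and \eqref{eq114d}, and bound both density terms with \eqref{q:densityC}. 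Two minor remarks: your change-of-variables formula is inverted (the correct form is $J_{\mathbb{P}\times\mu}F^\varphi = \frac{\varphi_\mu\circ T_\omega^{\varphi}}{\varphi_\mu}\cdot J_{\mathbb{P}\times\lambda}F^\varphi$), which is immaterial once logs and absolute values are taken; and the case $s(z_1,z_2)=1$ you isolate is already absorbed by taking $C^+$ large enough that \eqref{q:densityC} holds for any two points of $\Delta_0$, using the uniform two-sided density bound \eqref{eq119y} — which is what the paper does implicitly when it plugs in $\beta^{s(F^\varphi z_1,F^\varphi z_2)}$ even when this separation time is $0$.
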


\begin{proof}
From the definition of the Jacobian we see that $J_m G^\varphi|_{\Delta_0} = J_{\mathbb P \times \lambda} F^\varphi$ with the identification of $\Delta_0$ and $Y$. Lemma \ref{lemma4.1e}(iii) and Lemma \ref{lemma2.1a} give us that for each $P \in \mathcal P$ and each measurable set $A \subseteq P$,
\[ \begin{split}
\mathbb{P} \times \mu(F^{\varphi}(A)) =\ & \nu(G^{\varphi}(A \times \{0\})) \\
=\ & \int_{A \times \{0\}} \Big(\frac{d\nu}{dm} \circ G^{\varphi}\Big) J_m G^{\varphi} \, dm \\
=\ & \int_A \frac{d\nu}{dm}(F^{\varphi}(z),0) \cdot J_{\mathbb{P} \times \lambda} F^{\varphi}(z) \cdot \frac{dm}{d\nu}(z,0) \, d\mathbb{P} \times \mu(z).
\end{split}\]
This gives
\[ J_{\mathbb{P} \times \mu} F^{\varphi}(z) = \frac{d\nu}{dm}(F^{\varphi}(z),0) \cdot J_{\mathbb{P} \times \lambda} F^{\varphi}(z) \cdot \frac{dm}{d\nu}(z,0), \qquad z \in Y,\]
and thus, for each $z_1,z_2 \in Y$,
\[ \Big| \log \frac{J_{\mathbb{P} \times \mu} F^{\varphi}(z_1)}{J_{\mathbb{P} \times \mu} F^{\varphi}(z_2)}\Big| \leq 
 \Big| \log \frac{\frac{d\nu}{dm}(F^{\varphi}(z_1),0)}{\frac{d\nu}{dm}(F^{\varphi}(z_2),0)}\Big|+
 \Big| \log \frac{J_{\mathbb{P} \times \lambda} F^{\varphi}(z_1)}{J_{\mathbb{P} \times \lambda} F^{\varphi}(z_2)}\Big| +
 \Big| \log \frac{\frac{d\nu}{dm}(z_2,0)}{\frac{d\nu}{dm}(z_1,0)}\Big|. \]
Combining Proposition \ref{prop3.3b}(i), Proposition~\ref{prop4.1n} and Theorem~\ref{t:young1}(ii) gives the existence of a constant $C^+ > 0$ such that, for each $\Delta_{l,P} \in \eta$ and $\upsilon_1,\upsilon_2 \in \Delta_{l,P}$,
\begin{align}\label{eq119g}
\Big|\frac{\frac{d\nu}{dm}(\upsilon_1)}{\frac{d\nu}{dm}(\upsilon_2)}-1\Big| \leq C^+ \cdot 2^{-s(\upsilon_1,\upsilon_2)}. 
\end{align}
Using that $|\log \frac{x}{y}| \in \{ \log \frac{x}{y}, \log \frac{y}{x} \}$ and $|\log x| \leq \max\{|x-1|,|x^{-1}-1|\}$ for all $x,y>0$, we obtain from \eqref{eq92a} and \eqref{eq119g} that
\[ \Big| \log \frac{J_{\mathbb{P} \times \mu} F^{\varphi}(z_1)}{J_{\mathbb{P} \times \mu} F^{\varphi}(z_2)}\Big| \le C^+ \cdot 2^{-s(F^\varphi(z_1), F^\varphi(z_2))} + C_3 \cdot 2^{-s(F^\varphi(z_1), F^\varphi(z_2))} + C^+ \cdot 2^{-s(z_1,z_2)}\]
for all $z_1, z_2 \in P$, $P \in \mathcal P$. The lemma thus holds with $C_6 = 3C^+ + 2C_3$.
\end{proof}

\begin{proof}[Proof of Theorem \ref{result2b}]
Let $f \in L^{\infty}(\Sigma^{\mathbb{N}} \times [0,1], \mathbb{P} \times \mu)$ and $h \in \mathcal{H}$ be such that both $f$ and $h$ are identically zero on $\Sigma^{\mathbb{N}} \times \big([0,\frac{1}{2}] \cup [\frac{3}{4},1]\big)$ and such that $\int f \, d\mathbb{P} \times \mu \cdot \int h \, d\mathbb{P} \times \mu > 0$. Let $\gamma \in (\gamma_1,\min\{\gamma_2+1,-1\})$ if $\gamma_1 < -1$ and $\gamma \in (\gamma_1,\frac{\gamma_2}{2})$ if $-1 \leq \gamma_1 < 0$. This is possible by assumption. Our strategy is to apply Theorem \ref{t:gouezel} with $Y$ as before. For this, we verify (g1), (g2) and (g3).\footnote{More precisely, we apply Theorem \ref{t:gouezel} to versions of $f$ and $h$ that are also zero on $\big(\Sigma^{\mathbb{N}} \times (\frac{1}{2},\frac{3}{4})\big) \backslash Y$.}

\vskip .2cm
For (g3), $h \in \mathcal H$ implies that $h \in \mathcal{H}_{\alpha}$ for some $\alpha \in (0,1)$ and thus $h \circ \pi \in \mathcal{C}_{1/2^\alpha}$ by Lemma \ref{lemma4.2d}. In particular this yields (g3) with $\delta = 2^{-\alpha} > \frac12$. For (g2), Lemma \ref{lemma4.1e}(iii) and \eqref{eq119y} give
\[ \mathbb P \times \mu (\varphi >n) = \int_{\{ \varphi >n\} \times \{0\}} \frac{d\nu}{dm} \, dm \le C_4 \cdot \mathbb P \times \lambda (\varphi >n).\]
Together with Proposition \ref{prop3.3b}(ii) this implies that
\[ \mathbb P \times \mu (\varphi >n) = O( n^{\gamma-1}).\]
Finally, (g1) follows from Lemma~\ref{lemma4.3p} by setting $C^* = C_6$ and noting that $\frac1{2^\alpha} > \frac{1}{2}$. Hence, we satisfy all the conditions of Theorem~\ref{t:gouezel} with $\delta = \frac1{2^\alpha}$ and $\zeta = 1-\gamma$. Note that
\[ K_{1-\gamma} (n) = \begin{cases}
n^{\gamma-1}, & \text{if } 1-\gamma >2,\\
\frac{\log n}{n^2}, & \text{if } 1-\gamma =2,\\
n^{2\gamma}, & \text{if } 1-\gamma \in (1,2).
\end{cases}\]
If $\gamma_1 < -1$, then $1-\gamma \in (\max\{-\gamma_2,2\},1-\gamma_1) \subseteq (2, \infty)$ and if $-1 \le  \gamma_1 <0$, then $1-\gamma \in (1-\frac{\gamma_2}{2}, 1-\gamma_1) \subseteq (1,2)$. We can thus conclude from Theorem \ref{t:gouezel} that
 \begin{align*}
 \Big|Cor_n(f,h) - \Big(\sum_{k>n}^{\infty} \mathbb{P} \times \mu(\varphi > k)\Big) \int f \, d \mathbb{P} \times \mu \int h \, d\mathbb{P} \times \mu \Big| = O(n^{\xi}),
 \end{align*}
where $\xi = \gamma-1$ if $\gamma_1 < -1$ and $\xi = 2\gamma$ if $-1 \leq \gamma_1 < 0$. As above it follows from Proposition \ref{prop3.3b}(iii) combined with Lemma \ref{lemma4.1e}(iii) and \eqref{eq119y} that $\mathbb{P} \times \mu(\varphi > n) = \Omega(n^{\gamma_2-1})$ and thus
\[ \sum_{k>n}^{\infty} \mathbb{P} \times \mu(\varphi > k) = \Omega(n^{\gamma_2}).\]
The result now follows from observing that $\gamma_2 > \xi$.
\end{proof}

We provide some examples of combinations of parameters for which the conditions of Theorem \ref{result2b} hold. As before set $\ell_{\min} = \min\{\ell_b\, :\,  b \in \Sigma_B\}$ and $p_B = \sum_{j \in \Sigma_B} p_j$ and set $\pi_B = \sum_{j \in \Sigma_B\, :\,  \ell_j = \ell_{\max}} p_j$. Examples that satisfy the conditions of Theorem \ref{result2b} include the following.
\begin{itemize}
\item[--] If $\Sigma_B$ consists of one element, then $\gamma_1 = \gamma_2$.
\item[--] If $p_B^{-1/3} < \ell_{\min} \leq \ell_{\max} < p_B^{-1/2}$, or equivalently $\ell_{\min} > \ell_{\max}^{2/3}$ and $p_B \in (\ell_{\min}^{-3},\ell_{\max}^{-2})$, then $\theta \leq p_B \cdot \ell_{\max} < \ell_{\max}^{-1}$, so $\theta < 1$ and $\gamma_1 < -1$, and
\[ \gamma_2 \geq 1 + \frac{\log p_B}{\log \ell_{\min}} > -2 \geq \gamma_1 -1.\]
\item[--] If $\pi_B > p_B^{4/3}$ (or equivalently $\pi_B^{-1/2} < p_B^{-2} \pi_B$) and $\ell_{\max} \in [\pi_B^{-1/2},p_B^{-2} \pi_B)$, then $\theta \leq p_B \cdot \ell_{\max} \leq p_B^{-1} \pi_B < 1$ and $\theta \geq \pi_B \cdot \ell_{\max} \geq \ell_{\max}^{-1}$, i.e.~$\gamma_1 \geq -1$, and
\[ \gamma_2 \geq 1 + \frac{\log \pi_B}{\log \ell_{\max}} > \frac{2 \log (p_B \cdot \ell_{\max})}{\log \ell_{\max}} \geq 2 \gamma_1.\]
\end{itemize}

\section{Further results and final remarks}\label{sec5}

We can obtain more information from the results from \cite{gouezel04}. First of all, using the last part of \cite[Theorem 6.3]{gouezel04} the upper bound in Theorem \ref{result2a} can be improved for a specific class of test functions.

\begin{theorem}\label{thrm5.1}
Assume that $\theta < 1$. Let $f \in L^{\infty}(\Sigma^{\mathbb{N}} \times [0,1], \mathbb{P} \times \mu)$ and $h \in \mathcal{H}$ be such that both $f$ and $h$ are identically zero on $\Sigma^{\mathbb{N}} \times \big([0,\frac{1}{2}] \cup [\frac{3}{4},1]\big)$ and $\int h \, d\mathbb{P} \times \mu = 0$. Let $\gamma \in (\gamma_1,0)$. Then
\begin{align*}
|Cor_n(f,h)| = O(n^{\gamma-1}).
\end{align*}
\end{theorem}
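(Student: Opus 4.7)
The proof proceeds along the same general lines as that of Theorem \ref{result2b}: apply Gou\"ezel's Theorem \ref{t:gouezel} to the induced system on $Y$ with $\rho = \mathbb{P}\times\mu$. The novelty here is that the extra hypothesis $\int h\, d\mathbb{P}\times\mu = 0$ annihilates the ``main term'' $\big(\sum_{k>n}\rho(\varphi>k)\big)\int f\, d\rho \int h\, d\rho$ in the conclusion of Theorem \ref{t:gouezel}, so only an error-type contribution remains and it can be controlled more tightly using the last part of \cite[Theorem 6.3]{gouezel04}.

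First I would verify the three Gou\"ezel conditions exactly as in the proof of Theorem \ref{result2b}. Since $h\in\mathcal{H}$, we have $h\in\mathcal{H}_\alpha$ for some $\alpha\in(0,1)$. Condition (g1) is provided by Lemma \ref{lemma4.3p} with $C^* = C_6$ and $\delta = 2^{-\alpha}$, while (g3) follows from Lemma \ref{lemma4.2d}, which guarantees $h\circ\pi\in\mathcal{C}_{2^{-\alpha}}$ with the same $\delta$. For (g2) I would combine Lemma \ref{lemma4.1e}(iii) with the density bound \eqref{eq119y} to pass from $\mathbb{P}\times\lambda$ to $\mathbb{P}\times\mu$, and then invoke Proposition \ref{prop3.3b}(ii) to obtain $\mathbb{P}\times\mu(\varphi>n) = O(n^{\gamma-1})$ for the chosen $\gamma\in(\gamma_1,0)$. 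Thus (g2) holds with $\zeta = 1-\gamma > 1$.

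With all hypotheses in place and the main term killed by $\int h\,d\mathbb{P}\times\mu = 0$, I would invoke the mean-zero refinement in the final part of \cite[Theorem 6.3]{gouezel04}, which in this situation yields the sharper bound $|Cor_n(f,h)| = O\big(\mathbb{P}\times\mu(\varphi>n)\big)$, independently of the thresholds that govern $K_\zeta(n)$. Combined with the tail estimate $\mathbb{P}\times\mu(\varphi>n) = O(n^{\gamma-1})$ obtained above, this gives precisely $|Cor_n(f,h)| = O(n^{\gamma-1})$.

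The main delicacy is that one cannot simply read off the desired rate from the $K_\zeta(n)$ bound displayed in Theorem \ref{t:gouezel}: that bound equals $n^{\gamma-1}$ only when $\zeta>2$, i.e.\ when $\gamma<-1$, and degrades to $n^{2\gamma}$ (worse than $n^{\gamma-1}$) on the range $\gamma\in(-1,0)$. Replacing $\gamma$ by an auxiliary $\gamma'\in(\gamma_1,-1)$ would rescue only the subcase $\gamma_1<-1$; to cover the full statement one genuinely needs the improved mean-zero estimate from Gou\"ezel's theorem, and identifying/applying that piece is the essential point of the argument.
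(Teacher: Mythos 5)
Your proposal is correct and follows essentially the same route as the paper: verify (g1)--(g3) exactly as in the proof of Theorem \ref{result2b} and then invoke the mean-zero (``last'') part of \cite[Theorem 6.3]{gouezel04}, which directly yields $|Cor_n(f,h)| = O(\mathbb{P}\times\mu(\varphi>n)) = O(n^{\gamma-1})$. Your observation that the $K_\zeta(n)$ bound displayed in Theorem \ref{t:gouezel} degrades to $n^{2\gamma}$ on the range $\gamma\in(-1,0)$ and therefore cannot by itself give the stated rate is accurate and explains why the refinement for $\int h\,d\mathbb{P}\times\mu = 0$ is genuinely needed here.
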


\begin{proof}
The statement follows by applying the last part of \cite[Theorem 6.3]{gouezel04}. For this, (g1), (g2) and (g3) need to be verified. This is done before in the proof of Theorem \ref{result2b}.
\end{proof}

In \cite[Theorem 6.13]{gouezel04} a Central Limit Theorem is derived for a specific class of functions in $\mathcal{H}$ with zero integral. This result immediately carries over to our setting and is given in the next theorem.

\begin{theorem}[cf.~Theorem 6.13 in \cite{gouezel04}] \label{thrm5.2}
Assume that $\theta < 1$. Let $h \in \mathcal{H}$ be identically zero on $\Sigma^{\mathbb{N}} \times \big([0,\frac{1}{2}] \cup [\frac{3}{4},1]\big)$ and with $\int h \, d\mathbb{P} \times \mu = 0$. Then the sequence $\frac{1}{\sqrt{n}} \sum_{k=0}^{n-1} h \circ F^k$ converges in distribution with respect to $\mathbb{P} \times \mu$ to a normally distributed random variable with zero mean and finite variance $\sigma^2$ given by
\begin{align*}
\sigma^2 = - \int h^2 \, d\mathbb{P} \times \mu + 2 \sum_{n=0}^{\infty} \int h \cdot h \circ F^n \, d \mathbb{P} \times \mu.
\end{align*}
Furthermore, we have $\sigma = 0$ if and only if there exists a measurable function $\psi$ on $\Sigma^{\mathbb{N}} \times [0,1]$ such that $h \circ F = \psi \circ F - \psi$. Such a function $\psi$ then satisfies $\sup_{z_1,z_2 \in Y} \frac{|\psi(z_1)-\psi(z_2)|}{(2^{-\alpha})^{s(z_1,z_2)}} < \infty$ and $\psi(F^j (z)) = \psi(z)$ for each $z \in Y$ and each $j = 0,1 \ldots, \varphi(z)-1$.
\end{theorem}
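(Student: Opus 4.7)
My strategy is to invoke \cite[Theorem 6.13]{gouezel04} applied to the Young tower built in Section~\ref{subsec4.1}. The key observation that makes this a direct application is that since $h$ is identically zero on $\Sigma^{\mathbb{N}} \times \big([0,\frac{1}{2}] \cup [\frac{3}{4},1]\big)$, for every $(\omega,x) \in Y$ and every $1 \le k < \varphi(\omega,x)$ the orbit point $F^k(\omega,x)$ lies outside $\Sigma^{\mathbb{N}} \times (\frac{1}{2},\frac{3}{4})$, and hence $h(F^k(\omega,x)) = 0$. Consequently the induced observable $H := \sum_{k=0}^{\varphi-1} h \circ F^k$ reduces on $Y$ to $h|_Y$, which is bounded because $h \in \mathcal{H}_\alpha$ for some $\alpha \in (0,1)$. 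This boundedness will give finite variance of the induced observable for free.

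Next I would verify the hypotheses of Gou\"ezel's CLT, which are essentially the same conditions (g1)--(g3) already used in the proof of Theorem~\ref{result2b}, together with the mean-zero assumption (given) and an integrability condition on $H$ that is immediate from boundedness. Condition (g1) is exactly Lemma~\ref{lemma4.3p} and condition (g3) follows from Lemma~\ref{lemma4.2d}, so both transfer verbatim. For (g2) I would fix any $\gamma \in (\gamma_1,0)$ and combine Proposition~\ref{prop3.3b}(ii) with Lemma~\ref{lemma4.1e}(iii) and the density bound \eqref{eq119y} to obtain $\mathbb{P}\times\mu(\varphi > n) = O(n^{\gamma-1})$, which gives the required polynomial tail with exponent $\zeta = 1-\gamma > 1$. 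Gou\"ezel's theorem then yields convergence in distribution of $\tfrac{1}{\sqrt n}\sum_{k=0}^{n-1} h\circ F^k$ under $\mathbb{P}\times \mu$ to a centered normal with variance given by the Green--Kubo series stated in the theorem; absolute summability of the correlations appearing in that series is guaranteed by Theorem~\ref{thrm5.1} since $\int h\, d\mathbb{P}\times\mu = 0$.

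For the degeneracy statement I would use the second half of \cite[Theorem 6.13]{gouezel04}, which provides the dichotomy $\sigma = 0$ iff $h$ is a measurable coboundary $h\circ F = \psi\circ F -\psi$, together with the claimed H\"older-in-separation-time regularity of $\psi$ on $Y$. The final column-constancy assertion $\psi(F^j(z)) = \psi(z)$ for $z\in Y$ and $0\le j \le \varphi(z)-1$ is then immediate from the coboundary equation: telescoping gives $\psi(F^j(z))-\psi(z)=\sum_{i=0}^{j-1} h(F^{i+1}(z))$, and each summand vanishes because $F^{i+1}(z)\notin Y$ for $0 \le i\le \varphi(z)-2$ and $h$ vanishes outside $Y$. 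I expect the main (only mildly technical) point in executing this plan to be matching the precise integrability/regularity hypotheses of Gou\"ezel's statement to the norms natural for our tower, but boundedness of $h$ together with Lemma~\ref{lemma4.2d} should cover any such requirement with room to spare; no genuinely new estimate appears to be needed beyond those developed in Sections~\ref{sec3} and \ref{sec4}.
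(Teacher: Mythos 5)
Your proposal is correct and follows essentially the same route as the paper, which states that \cite[Theorem 6.13]{gouezel04} ``immediately carries over'' once (g1)--(g3) have been verified as in the proof of Theorem~\ref{result2b}; you invoke exactly those verifications (Lemma~\ref{lemma4.3p} for (g1), Proposition~\ref{prop3.3b}(ii) with Lemma~\ref{lemma4.1e}(iii) and \eqref{eq119y} for (g2), and Lemma~\ref{lemma4.2d} for (g3)). The extra remarks you add---that $h(F^k(\omega,x))=0$ for $1\le k <\varphi(\omega,x)$ so the induced observable is just $h|_Y$, that Theorem~\ref{thrm5.1} gives absolute summability of the Green--Kubo series, and the telescoping argument for the column-constancy of $\psi$---are all sound and merely make explicit what the paper leaves implicit.
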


Using \cite[Theorem 4]{Y99} we can also derive a Central Limit Theorem, this time for a more general class of functions in $\mathcal{H}$ with zero integral but under the more restrictive assumption that $\theta < \ell_{\max}^{-1}$.

\begin{theorem}
Assume that $\theta < \ell_{\max}^{-1}$. Let $h \in \mathcal{H}$ be such that $\int h \, d\mathbb{P} \times \mu = 0$. Then the sequence $\frac{1}{\sqrt{n}} \sum_{k=0}^{n-1} h \circ F^k$ converges in distribution with respect to $\mathbb{P} \times \mu$ to a normally distributed random variable with zero mean and finite variance $\sigma^2$. Furthermore, we have $\sigma = 0$ if and only if there exists a measurable function $\psi$ on $\Delta$ such that $h \circ \pi \circ G = \psi \circ G - \psi$.
\end{theorem}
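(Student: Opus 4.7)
The strategy is to apply \cite[Theorem 4]{Y99} to the observable $h \circ \pi$ on the Young tower $(\Delta, G, \nu)$, and then to transfer the resulting CLT back to $(\Sigma^\mathbb{N}\times[0,1], F, \mathbb{P}\times\mu)$ via the semiconjugacy $\pi$. The coboundary characterization will come directly from Young's version of the theorem with only a cosmetic re-indexing.

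\textbf{Verifying the hypotheses of \cite[Theorem 4]{Y99}.} Conditions (t1)--(t6) are already established in Proposition \ref{prop4.1n}, and $\int_Y \varphi \, dm < \infty$ by Proposition \ref{prop3.3b}(i). The key additional input needed for the CLT is a summable tail for the return time. Since $\theta < \ell_{\max}^{-1}$ translates to $\gamma_1 = \log\theta/\log\ell_{\max} < -1$, I may pick $\gamma \in (\gamma_1, -1)$ and apply Proposition \ref{prop3.3b}(ii) together with the computation in \eqref{eq140g} to conclude $m(\hat\varphi > n) = O(n^\gamma)$ with $\gamma < -1$; this ensures $\hat\varphi \in L^1(m)$ and that the decay of correlations is summable, which is the regime in which \cite[Theorem 4]{Y99} applies. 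For the observable, since $h \in \mathcal{H}_\alpha$ for some $\alpha \in (0,1)$, Lemma \ref{lemma4.2d} yields $h \circ \pi \in \mathcal{C}_{1/2^\alpha}$, and boundedness of $h$ is inherited from Hölder continuity on the compact space $\Sigma^\mathbb{N}\times[0,1]$. The zero-integral condition transfers via $\mathbb{P}\times\mu = \nu \circ \pi^{-1}$: $\int_\Delta h\circ\pi\, d\nu = \int h\, d\mathbb{P}\times\mu = 0$. Hence \cite[Theorem 4]{Y99} provides a $\sigma \in [0,\infty)$ such that $\frac{1}{\sqrt n}\sum_{k=0}^{n-1}h\circ\pi\circ G^k$ converges in distribution (with respect to $\nu$) to $\mathcal N(0,\sigma^2)$.

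\textbf{Transferring to $F$.} Iterating the intertwining relation $\pi\circ G = F\circ \pi$ gives $h\circ\pi\circ G^k = h\circ F^k \circ \pi$ for every $k \geq 0$, so
\[ \frac{1}{\sqrt n}\sum_{k=0}^{n-1} h\circ F^k \circ \pi = \left(\frac{1}{\sqrt n}\sum_{k=0}^{n-1} h\circ F^k\right)\circ \pi.\]
Pushing forward by $\pi$ and using $\mathbb{P}\times\mu = \nu \circ \pi^{-1}$, the distribution of the right-hand side under $\nu$ equals that of $\frac{1}{\sqrt n}\sum_{k=0}^{n-1} h\circ F^k$ under $\mathbb{P}\times\mu$. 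Therefore convergence in distribution transfers to the base system with the same limiting law.

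\textbf{Coboundary case.} The characterization of $\sigma = 0$ from \cite[Theorem 4]{Y99} gives the existence of a measurable $\psi_0 : \Delta \to \mathbb{R}$ with $h\circ\pi = \psi_0\circ G - \psi_0$. Precomposing with $G$ and setting $\psi := \psi_0\circ G$ yields $h\circ\pi\circ G = \psi\circ G - \psi$, which is exactly the coboundary formulation in the statement; the converse implication is immediate from Young's characterization applied to $\psi_0 := \psi$ after noting that the coboundary property is preserved by such re-indexing. The main (mild) obstacle is simply to locate, in \cite{Y99}, the precise form of the summability condition that guarantees the CLT and to match it with our tail estimate; once this matching is made, everything else is a routine transfer via $\pi$.
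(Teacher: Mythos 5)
Your proof is correct and follows essentially the same route as the paper: verify the hypotheses of \cite[Theorem 4]{Y99} on the tower (noting $\theta < \ell_{\max}^{-1}$ gives $\gamma_1 < -1$ so that \eqref{eq140g} yields a summable tail), apply the CLT to $h\circ\pi$, and transfer back to $F$ via the semiconjugacy $\pi\circ G = F\circ\pi$ and $\mathbb{P}\times\mu = \nu\circ\pi^{-1}$. The only minor difference is that you re-index the coboundary equation from $h\circ\pi = \psi_0\circ G - \psi_0$ to $h\circ\pi\circ G = \psi\circ G - \psi$; the paper simply reads off the characterization from \cite[Theorem 4]{Y99} already in the latter form, so no re-indexing is required.
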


\begin{proof}
The result from \cite[Theorem 4]{Y99} gives a statement for $G$ on $\Delta$. We have already seen that $h \in \mathcal H$ implies $h \circ \pi \in \mathcal C_{1/2^{\alpha}}$ for some $\alpha \in (0,1)$. The assumption that $\theta < \ell_{\max}^{-1}$ implies $\gamma_1 < -1$. Take $\gamma \in (\gamma_1,-1)$. We saw in \eqref{eq140g} that $m(\hat{\varphi} > n) = O(n^{\gamma})$. It then follows from \cite[Theorem 4]{Y99} that $\frac{1}{\sqrt{n}} \sum_{k=0}^{n-1} h \circ \pi \circ G^k$ converges in distribution with respect to $\nu$ to a normally distributed random variable with zero mean and finite variance $\sigma^2$, with $\sigma > 0$ if and only if $h \circ \pi \circ G \neq \psi \circ G - \psi$ for any measurable function $\psi$ on $\Delta$. Since $\mathbb{P} \times \mu = \nu \circ \pi^{-1}$ and $F \circ \pi = \pi \circ G$, we get for any $u \in \mathbb{R}$ that
\[ \mathbb{P} \times \mu\Big(\frac{1}{\sqrt{n}} \sum_{k=0}^{n-1} h \circ F^k \leq u \Big) = \nu\Big( \frac{1}{\sqrt{n}} \sum_{k=0}^{n-1} (h \circ \pi) \circ G^k \leq u \Big).\]
The result now follows.
\end{proof}

Under an additional assumption on $r_{\min} = \min\{r_g: g \in \Sigma_G\}$ and $\ell_{\min} = \{\ell_b: b \in \Sigma_B\}$ we can weaken the assumption that the test functions in Theorem \ref{result2b}, Theorem \ref{thrm5.1} and Theorem \ref{thrm5.2} should be identically zero on $\Sigma^{\mathbb{N}} \times \big([0,\frac{1}{2}] \cup [\frac{3}{4},1]\big)$. Namely, if for an integer $l \geq 2$ we have
\begin{align*}
2^{-l} \cdot \min\Big\{ r_{\min} \cdot 2^{1/r_{\min}}, \ell_{\min}^{1+1/(\ell_{\min}-1)} \Big\} \geq 1,
\end{align*}
then it suffices to assume that these test functions are identically zero on $\Sigma^{\mathbb{N}} \times \big([0,\frac{1}{2}] \cup [1-\frac{1}{2^{l+1}},1]\big)$. Indeed, in this case Lemma \ref{lemma3.4a} and also Proposition \ref{prop3.3b} still carry over if we induce the random map on $(\frac{1}{2},1-\frac{1}{2^{l+1}})$ instead, and the result then follows by applying \cite[Theorem 6.3 and Theorem 6.13]{gouezel04} to this induced system in the same way as has been done in the proofs of Theorem \ref{result2b}, Theorem \ref{thrm5.1} and Theorem \ref{thrm5.2}. The step in Lemma \ref{lemma4.3p} where \eqref{eq119g} is applied will then be replaced by applying
\begin{align*}
\bigg| \frac{\frac{d\mu}{d\lambda}(x)}{\frac{d\mu}{d\lambda}(y)}-1\bigg| \leq C \cdot |x -y|, \qquad \forall x, y \in \Big[\frac{1}{2},1-\frac{1}{2^{l+1}}\Big], \qquad \text{ for some $C > 0$},
\end{align*}
which can be shown using that $\frac{d\mu}{d\lambda}$ is bounded away from zero and is locally Lipschitz on $[\frac{1}{2},1)$. As remarked in Remark~\ref{r:lipschitz}, the latter can be shown by following the same steps as in \cite[Section 3]{Zeegers21}. It in particular shows that $\frac{d\mu}{d\lambda}$ is bounded on $[\frac{1}{2},1-\frac{1}{2^{l+1}}]$, which replaces the step in the proof of Theorem \ref{result2b} where Lemma \ref{lemma4.1e}(iii) and \eqref{eq119y} are applied.

\vskip .2cm
We can extend the results in this article to the following more general classes of good and bad maps. Fix a $c \in (0,1)$, and let the class of good maps $\mathfrak G$ consist of maps $T_g: [0,1] \rightarrow [0,1]$ given by
\begin{align*}
T_g(x) =  \begin{cases}
1 - c^{-r_g} (c-x)^{r_g}, & \text{if}\quad x \in [0,c),\\
\frac{x-c}{1-c}, & \text{if}\quad x \in [c,1],
\end{cases}
\end{align*}
where $r_g \geq 1$, and the class of bad maps $\mathfrak B$ consist of maps $T_b: [0,1] \rightarrow [0,1]$ given by
\begin{align*}
T_b(x) =  \begin{cases}
c - c^{-\ell_b+1} (c-x)^{\ell_b}, & \text{if}\quad x \in [0,c),\\
\frac{x-c}{1-c}, & \text{if}\quad x \in [c,1],
\end{cases}
\end{align*}
where $\ell_b > 1$. For these collections of maps Lemma \ref{lemma2.6d} carries over replacing $\frac{1}{2}$ with $c$ and $\frac{1}{4}$ with $c^2$ under the additional assumptions that $c < r_g \cdot (1-c)^{1-r_g^{-1}}$ holds for all $g \in \Sigma_G$, and that $1-c > \ell_b^{1/(1-\ell_b)}$ holds for all $b \in \Sigma_B$. Furthermore, Lemma \ref{lemma3.4a} carries over under the additional assumption that
\begin{align*}
\frac{(1-c)^2}{c} \cdot \min\Big\{ r_{\min} \cdot (1-c)^{-1/r_{\min}}, \ell_{\min}^{1+1/(\ell_{\min}-1)} \Big\} \geq 1.
\end{align*}
By equipping $\Sigma^{\mathbb{N}}$ with the metric $d_{\Sigma^{\mathbb{N}}}(\omega,\omega') = (1-c)^{\min\{i \in \mathbb{N} \, : \,  \omega_i \neq \omega_i'\}}$, it can be shown that under these additional conditions all the results formulated in Sections \ref{sec1} and \ref{sec5} carry over and are proven in the same way.

\vskip .2cm
Finally, polynomial decay of correlations is expected to hold for a more general class of good and bad maps for which random compositions show critical intermittency, but the proofs may become more cumbersome. Our assumption that all maps are identical on the interval $[\frac12, 1]$ made it easier to find a suitable inducing domain, but does not seem necessary. Furthermore, the linearity of this right branch and the explicit forms of the left branches of the good and bad maps made the series in \eqref{eq77t} and \eqref{eq80v} telescopic. A first step to generalise our results to a more general class might be to require this explicit form of the left branch only close to $c$, though any generalisations will inevitably make the calculations more complicated.

\section*{Acknowledgments}
We would like to thank Marks Ruziboev for valuable discussions.


\bibliographystyle{plain} 
\bibliography{Bibliography}

\end{document}